\newtheorem{theorem}{Theorem}
\newtheorem{lemma}{Lemma}
\newproof{proof}{Proof}
\newcommand{\changes}[1]{\textcolor{black}{#1}}
\newcommand{\ii}[1]{\textcolor{red}{#1}}
\begin{document}
\begin{frontmatter}

\title{Many (most?) column subset selection criteria\\ are NP hard \changes{for a few columns}\tnoteref{t1}}
\tnotetext[t1]{The work was funded, in part, by the National Science Foundation through the awards DMS-1745654  and DMS-2411198, and the Department of Energy through the award DE-SC0023188.}

\author[1]{Ilse C.F. Ipsen}\ead{ipsen@ncsu.edu}
\affiliation{organization = {Department of Mathematics, North Carolina State University},
city = {Raleigh, NC}, country = {USA}}

\author[2]{Arvind K. Saibaba}\ead{asaibab@ncsu.edu}
\affiliation{organization = {Department of Mathematics, North Carolina State University},
city = {Raleigh, NC}, country = {USA}}


\begin{abstract}
 We consider a variety of criteria for selecting $k$ representative columns from a real $m\times n$ matrix $\ma$, \changes{when sufficiently few columns are required, i.e., $1\leq k\leq \min\{\rank(\ma), m/3\}$.} The criteria include the following optimization problems:
absolute volume and
S-optimality maximization; norm, pseudo-inverse norm,
and condition minimization number in the two-norm, Frobenius norm and Schatten $p$-norms for $p>2$; stable rank maximization; and the new
criterion of relative volume maximization, \changes{which is inversely proportional to a power of the condition number.}
We show that these criteria 
are NP hard and \changes{many} do not admit polynomial time approximation schemes (PTAS).
To formulate the optimization problems as decision problems, we derive optimal values for the subset selection criteria, as well as expressions for partitioned pseudo-inverses. \changes{The results for minimization of the pseudo-inverse in the Frobenius norm are applicable to trace optimization in A-optimal design.}
\end{abstract}

\begin{keyword}
volume\sep 
condition number\sep
 Schatten p-norm\sep
  stable rank\sep 
  NP hard\sep NP complete\sep 
  PTAS
\end{keyword}
\newpageafter{abstract}
\end{frontmatter}

\section{Introduction}\label{s_intro}
Given a general\footnote{We make no assumptions on the dimensions $m$ and $n$ of $\ma$.}
matrix $\ma\in\rmn$, what does it mean to find $k$
representative columns  $\mc\in\real^{m\times k}$? This problem is known as
`column subset selection'.
Our goal is to establish the complexity classes for the popular subset selection problems in Table~\ref{t_intro},
\changes{in the case when sufficiently few columns are required, i.e., $1\leq k\leq \min\{\rank(\ma),m/3\}$.}

We show that most criteria in Table~\ref{t_intro} are NP hard, which means they cannot be solved in polynomial time
unless $P=NP$. 
Although most subset selection criteria are NP hard,
not all are. For example, a submatrix $\mc$ with minimal Frobenius norm can be computed
in polynomial time\footnote{A greedy algorithm picks the columns of $\ma$ with smallest two-norm.} (Section~\ref{s_twop}).

Our strategy is the following: Instead of considering the optimization problem, e.g., find a submatrix $\mc\in\real^{m\times k}$ with maximal volume,
we show NP hardness instead for the associated decision problem, e.g., given $b>0$, find a submatrix $\mc\in\real^{m\times k}$ whose
volume exceeds $b$. 
To connect the optimization and decision problems,
we derive optimal values for the subset selection
criteria (Section~\ref{s_unitnorm}), so that the optimization problems can be formulated
as decision problems for the optimal values.

Having established that a criterion is NP hard, it is reasonable to ask if it can be \textit{approximated} in polynomial time. This leads to the concept of a \textit{Polynomial Time 
Approximation Scheme (PTAS)}.
A PTAS is a polynomial
time algorithm such that for each $\epsilon>0$
 there exists a $(1+\epsilon)$-approximation algorithm for minimization problems, and a $(1-\epsilon)$-approximation algorithm for maximization problems 
 \cite[Section 1]{Papa1991}, \cite[Definition 1.2]{WillBook}. We show that most criteria 
 in Table~\ref{t_intro} do not admit a PTAS either.

 \subsection{Complexity Theory}
 We focus on decision problems with a binary output `yes' or `no', and adopt the Turing model of computation,
where inputs are rational numbers, and the input size is measured in terms of bits.  \changes{The only irrational number used in our reduction 
is $1/\sqrt{3}$, which can also be expressed in terms of a finite number of bit \cite[Section 9]{HLim13}.}

The complexity class $P$ consists of decision problems that can be solved by a deterministic Turing machine in polynomial time, while the complexity class $NP$ consists of decision problems for which a deterministic Turing machine can verify an output of `yes' 
or `no' in polynomial time.  A problem is \textit{NP hard} if there is polynomial time reduction to this problem from any problem in NP.
A decision problem is \textit{NP complete} if it is in NP and it is also NP hard.  
Thus, an NP hard problem is at least as hard as an NP complete problem \cite[Section 5.1]{Garey}.

A standard strategy for establishing NP hardness performs a polynomial time reduction from an NP complete problem 
to the problem at hand. This reduction makes it possible to convert any algorithm  for solving the problem at hand into a corresponding algorithm for an NP complete problem.
In particular, a reduction that \textit{operates in polynomial time} makes it possible to convert any polynomial time 
for the problem at hand to a corresponding polynomial time algorithm for the NP complete problem 
\cite[Section 1.5]{Garey}.
Following \cite{CM09}, we choose
the NP complete problem `Exact cover by 3 sets (X3C)' \cite[Section A.3.1]{Garey},
which appears in Karp's 1972 list of NP complete problems \cite[Section 4]{Karp1972}.

To show that a subset selection criterion does not admit a PTAS, we create a ``gap'' between the optimal value and 
 a particular suboptimal value, the so-called
 \textit{inapproximability threshold}, and show that it is NP hard to approximate the criterion beyond the threshold, that is,
 for any value inside the gap.

 Unfortunately, creating such a gap is not always feasible~\cite[Section 1]{Papa1991}, and we were
 not able to do so for a few criteria, including
 Schatten 
 $p$-norm minimization for $p>2$ (Section~\ref{s_twop}) and 
 condition number minimization in Schatten $p$-norms
 for $p>2$ (Section~\ref{s_condno}). 

\begin{sidewaystable}\label{t_intro}
\centering
\fbox{%
\begin{tabular}{|l|l|l|l|c|}\hline
Quantity& Criterion & NP hard & No PTAS& Definition\\
\hline\hline
Volume & $\max_{\mc}{\vol(\mc)}$ & \cite[Theorem 4]{CM09} & \cite[Theorem 6]{CM09}$^{(1)}$
&$\vol(\mc)\equiv\prod_{j=1}^k{\sigma_j(\mc)}$\\
& & & \changes{\cite[Theorem 8]{DEFM15}} & \\
& & Theorem~\ref{t_voldec}& Theorem~\ref{t_vol} &\\
 & $\max_{\mc}{\rvol(\mc)}$ & 
Theorem~\ref{thm:rvolnp}& Theorem~\ref{t_volptas} &
 $\rvol(\mc)\equiv\vol(\mc)/\|\mc\|_2^k$\\
 &$\max_{\mc}{\sopt(\mc)}$ &Theorem~\ref{t_voldec} &
 Theorem~\ref{t_sopt} &$\sopt(\mc)\equiv(\vol(\mc)/\prod_{i=1}^k{\|\mc\ve_i\|_2)^{1/k}}$\\
Norm & $\min_{\mc}{\|\mc\|_2}$ & \cite[Theorem 4]{CM09} & \cite[Theorem~5]{CM09}& \\
&  & Theorem~\ref{t_norm} & Theorem~\ref{t_2m} &\\
 & $\min_{\mc}{\|\mc\|_F}$ & No & N/A&\\
 & $\min_{\mc}{\|\mc\|_{(p)}}$ & Theorem~\ref{t_norm}&&\\
Pseudo-inverse & $\min_{\mc}{\|\mc^{\dagger}\|_2}$ & \cite[Theorem 4]{CM09} & \cite[Theorem 7]{CM09} &\\
&& Theorem~\ref{t_pi7}& Theorem~\ref{t_ssv}&\\
 & $\min_{\mc}{\|\mc^{\dagger}\|_F}$ & Theorem~\ref{t_pi7} & Theorem~\ref{t_pi} &\\
 &$\min_{\mc}{\|\mc^{\dagger}\|_{(p)}}$& Theorem~\ref{t_pi7}&Theorem~\ref{t_pi3}&\\
Condition no& $\min_{\mc}{\kappa_2(\mc)}$ &\cite[Theorem 4]{CM09} & \cite[Theorem 8]{CM09}&
$\kappa_2(\mc)\equiv \|\mc\|_2\|\mc^{\dagger}\|_2$ \\
& & Theorem~\ref{t_cond}& Theorem~\ref{t_2cond}& \\
 & $\min_{\mc}{\kappa_F(\mc)}$ & 
Theorem~\ref{t_cond}& Theorem~\ref{t_fcond}&
$\kappa_F(\mc)\equiv \|\mc\|_F\|\mc^{\dagger}\|_F$\\
 & $\min_{\mc}{\kappa_D(\mc)}$ &
Theorem~\ref{t_cond} & Theorem~\ref{t_cd} & $\kappa_D(\mc)\equiv\|\mc\|_F\|\mc^{\dagger}\|_2$ \\ 
& $\min_{\mc}{\kappa_{D,p}(\mc)}$ & Theorem~\ref{t_cond} &&
$\kappa_{D,p}(\mc)\equiv \|\mc\|_{(p)}\|\mc^{\dagger}\|_2$\\
Stable rank & $\max_{\mc}{\sr(\mc)}$ & 
Theorem~\ref{t_other} &Theorem~\ref{t_sr}& $\sr(\mc)\equiv \|\mc\|_F^2/\|\mc\|_2^2$\\
&$\max_{\mc}{\sr_{(p)}(\mc)}$ & Theorem~\ref{t_other}&&
$\sr_{(p)}(\mc)\equiv \|\mc\|_{(p)}^p/\|\mc\|_2^p$
\\
Residual & $\min_{\mc}{\rho_2(\mc)}$ & &  &
$\rho_2(\mc)\equiv \|(\mi-\mc\mc^{\dagger})\ma\|_2$\\
 & $\min_{\mc}{\rho_F(\mc)}$ & 
\cite[Theorem 2.2]{shitov2021column}&
 \cite[Theorem 1.1]{civril2014column}$^{(2)}$ &
$\rho_F(\mc)\equiv \|(\mi-\mc\mc^{\dagger})\ma\|_F$\\
\hline
\end{tabular}
}
\caption{Complexity of criteria for 
selecting \changes{$k\leq \min\{\rank(\ma), m/3\}$ representative columns $\mc\in\real^{m\times k}$ from $\ma\in\rmn$:}
(1) Exponential inapproximability of the volume is shown
in \cite[Theorem 5]{CMI2013}.
(2) Minimizing the Frobenius norm residual is
 NP complete \cite[Theorem 2.2]{shitov2021column}, and 
is PTAS inapproximable conditional on the \textit{Unique Games
 Conjecture} being true \cite[Theorem 1.1]{civril2014column}.}
 \end{sidewaystable}

\subsection{Overview}
We derive NP hardness results and PTAS approximation thresholds for:
relative volume maximization (Section~\ref{s_rvol}); 
volume and S-optimality maximization (Section~\ref{s_volume}); 
two-norm, Frobenius norm and Schatten $p$-norm minimization (Section~\ref{s_twop}); two-norm,
Frobenius norm and Schatten $p$-norm minimization of pseudo-inverses (Section~\ref{s_psnorm});
condition number minimization (Section~\ref{s_condno}); and stable rank maximization (Section~\ref{s_srank}). 
To formulate the subset selection criteria as decision problems,
we derive optimal values for the criteria  (Section~\ref{s_unitnorm}), 
as well as expressions for partitioned pseudo-inverses
(Section~\ref{s_partinv}).

\subsection{Contributions}

Our main contributions include the following:
\begin{enumerate}
\item We introduce the 
criterion of `relative volume maximization'
and show that it is NP-hard and does not admit
a PTAS. \changes{Unlike the volume, the relative volume is able to detect whether
a matrix is ill-conditioned, because it is inversely proportional to a power of the condition number (Section~\ref{s_rvol}).}
\item We present rigorous proofs to derive PTAS inapproximability thresholds  
that differ from those in \cite{CM09} for
two-norm minimization of the pseudo-inverse
(Section~\ref{s_psnorm2})
and
two-norm condition number minimization (Section~\ref{s_condno2}).

\item We show that the following criteria are NP-hard:
$p$-norm pseudo-inverse minimization,
$p$-norm condition number minimization, and mixed $p$-norm condition number minimization for $p\geq 2$ (Section~\ref{s_condno1}), as well as stable rank 
maximization and $p$-stable rank maximization for $p>2$
(Section~\ref{s_srank1}).

\item We show that the following criteria do not admit 
a PTAS:
Frobenius norm minimization of the pseudo-inverse (Section~\ref{s_psnorm3}) and $p$-norm minimization of the 
pseudo-inverse for $p>2$ (Section~\ref{s_psnorm4});
Frobenius norm condition number minimization (Section~\ref{s_condno3}) and mixed  
condition number minimization (Section~\ref{s_condno4});
and
stable rank maximization (Section~\ref{s_srank2}).

\item We derive optimal values \changes{for} the subset
selection criteria (Section~\ref{s_unitnorm}), so that 
their optimization versions can be formulated as decision
problems.

\item We derive expressions and $p$-norm bounds for 
partitioned pseudo-inverses (Section~\ref{s_partinv}).
\end{enumerate}


\subsection{\changes{Roadmap}}
\changes{We give a brief sketch of our approaches for deriving NP hardness results
(Section~\ref{s_141}) and PTAS results (Section~\ref{s_142}) for the above subset selection
criteria.}

\changes{\subsubsection{Extension of the NP-hardness results from \cite{CM09}}\label{s_141}
We perform the following steps.
\begin{enumerate}
\item Polynomial time reduction of the subset selection criteria to X3C by representing the collection of subsets as a matrix $\ma$ with unit norm columns.  
\item For such matrices with unit-norm columns, we establish optimal values for the subset selection criteria (Section~\ref{s_unitnorm}), and show that the optimal values are achieved by matrices with  
orthonormal columns. \\ 
Optimal values are derived for the criteria:
maximal volume and S-optimality (Lemma~\ref{l_vol}),
maximal relative volume (Lemma~\ref{lem:orth}), minimal norms (Lemma~\ref{l_norm}), 
minimal pseudo-inverse norms (Lemma~\ref{l_pinv}),
minimal condition numbers (Lemma~\ref{l_cond}), and maximal stable ranks (Lemma~\ref{l_srank}).
\item According to \cite{CM09}, X3C is true is and only if $\ma$ has a submatrix with orthonormal columns. 
\item Combining the two previous items implies that the criterion is optimal if and only if X3C is true.\\
This shows NP hardness of the criteria: relative volume maximization (Theorem~\ref{thm:rvolnp}), volume and S-optimality maximization (Theorem~\ref{t_voldec}), norm minimization (Theorem~\ref{t_norm}), pseudo-inverse norm minimization
(Theorem~\ref{t_pi7}), condition number minimization (Theorem~\ref{t_cond}), and stable rank maximization (Theorem~\ref{t_other}).
\end{enumerate}
}

\changes{
\subsubsection{Systematic approach for extending PTAS results from \cite{CM09}}\label{s_142}
We perform the following steps.
\begin{enumerate}
\item Monotonicity: Singular value interlacing implies that deleting columns from a matrix can only increase its relative volume (Lemma~\ref{l_inter}) and decrease the condition number (Lemma~\ref{l_inter2}). In other words, adding columns can only decrease the relative volume and increase
the condition number.
\item It therefore suffices to show the lack of a PTAS 
by establishing approximability thresholds  for matrices with 2 columns. 
\item Monotonicity implies that matrices with more columns cannot cross the approximability thresholds.\\
This shows that the following criteria do not admit a PTAS: maximization of relative volume (Theorem~\ref{t_volptas}), volume (Theorem~\ref{t_vol}) and S-optimality (Theorem~\ref{t_sopt}); two-norm minimization (Theorem~\ref{t_2m}); pseudo-inverse minimization 
in the two- (Theorem~\ref{t_ssv}), Frobenius (Theorem~\ref{t_pi}) and Schatten $p$-norms (Theorem~\ref{t_pi3});
condition number minimization in the two- (Theorem~\ref{t_2cond}), Frobenius (Theorem~\ref{t_fcond}) and mixed norms
(Theorem~\ref{t_cd}); and maximization of the stable rank (Theorem~\ref{t_sr}).
\end{enumerate}
}

\subsection{Literature}\label{s_lit}
We review existing work on column subset selection, \changes{focused} on computational complexity.
Early surveys of column subset
selection algorithms include \cite{CI1994,HongPan92}.

\paragraph{Norm minimization}
NP hardness and PTAS inapproximability for the two-norm are shown in
\cite[Theorems 5 and~7]{CM09}.

\paragraph{Pseudo-inverse norm minimization}
NP hardness and PTAS inapproximability are shown in
\cite[Theorems 4 and~7]{CM09} for the two-norm.
Randomized and deterministic 
algorithms for wide matrices $\ma\in\rmn$ with $m<n$
are given in \changes{\cite{AD2025,AvronBout2013,KO2025,NikoST19,NikoST22,Osinsky2024}}.

\changes{In the context of A-optimal design, trace minimization corresponds to 
minimization of the pseudo-inverse in the Frobenius norm \cite{NikoST19,NikoST22}. 
For instance, if $A$ has full row rank
then $\ma^{\dagger}=\ma^T(\ma\ma^T)^{-1}$ and
\begin{align*}
\|\ma^{\dagger}\|_F^2=\trace\left((\ma^{\dagger})^T\ma^{\dagger}\right)=\trace\left((\ma\ma^T)^{-1}\right).
\end{align*}
The reduction for $m=k$ is based on \textit{Odd Cycle Packing}, which consists of finding a maximal family of vertex-disjoint odd cycles in a simple undirected graph.}

\paragraph{Condition number minimization}
NP hardness and PTAS inapproximability are shown in
\cite[Theorems 4 and~8]{CM09} for two-norm condition numbers.

\paragraph{Volume maximization}
NP hardness and PTAS inapproximability are shown in
\cite[Theorems 4 and~6]{CM09}, while the exponential inapproximability of the volume is presented in \cite[Theorem 5]{CMI2013},
\changes{\cite[Theorem 8]{DEFM15}.
The latter involves a reduction based on \textit{Odd Cycle Packing}
for matrices $\ma\in\rmn$ with $\rank(\ma)=m$, $k = m$, $m=1320d$ and $n=2112 d$ for some $d>0$.}

\changes{An early result \cite[Theorem 3]{Welch1982}
in the context of D-optimality
shows the NP-hardness of volume maximization via a reduction from \textit{Hamiltonian Circuit}, where
$k\geq n$ rows $\mc\in\real^{k\times n}$ from an $m\times n$ adjacency matrix are to be selected to maximize $\vol(\mc)^2=\det(\mc^T\mc)$. 
}

Greedy algorithms are presented in~\cite{CM09,dHM2011}, and
local deterministic and randomized algorithms in \changes{\cite{GZ2025,GuEis96,NikoS16,Osinsky2023b}}. This is extended to Hilbert spaces in \cite{KressNi2024}.
The importance of 
local volume maximization for revealing the rank is demonstrated
in~\cite{DGTY2024}.

The S-optimality criterion bears a resemblance to the relative volume and was introduced in 
\cite[(3.5)]{SX2016} and \cite[(3.10)]{LCS2024}.

\paragraph{Residual Minimization}
The NP completeness in the Frobenius norm is established 
in \cite[Theorem 2.2]{shitov2021column}, and PTAS inapproximability conditional on the \textit{Unique Games
 Conjecture} being true \cite[Theorem 1.1]{civril2014column}.
Algorithms for the two- and Frobenius norms are given
in \cite{BDMI2014}.

\changes{Early randomized algorithms, based on volume sampling are given in~\cite{BDMI2014,DeshR10,DeshR06}. A statistical 
perspective is provided in \cite{SoodHastie2025}, and it is shown that subset selection is equivalent to the method
of principal variables.}

\subsection{Assumptions}
Let $\ma\in\rmn$ be a general matrix with $\rank(\ma)\geq k$, 
and singular values 
$\sigma_1(\ma)\geq \cdots\geq\sigma_{\min\{m,n\}}(\ma)\geq 0$.
Let $\mc\in\real^{m\times k}$ 
be a submatrix of $k$ columns of $\ma$, with singular values 
\begin{equation}\label{e_so}
\sigma_1(\mc)\geq \cdots\geq\sigma_k(\mc)\geq 0.
\end{equation}

Singular value interlacing \cite[7.3.P44]{HoJoI}
implies
\begin{equation}\label{e_inter}
\sigma_{\min\{m,n\}-k+j}(\ma)\leq \sigma_j(\mc)\leq
    \sigma_j(\ma),\qquad 1\leq j\leq k.
\end{equation}
We denote by $\mi_n\equiv\begin{bmatrix} \ve_1& \cdots &\ve_n\end{bmatrix}\in\rnn$ the identity matrix, and by $\ve_j\in\rn$ its columns.
\section{Relative Volume}\label{s_rvol}
We introduce the concept of maximal volume, and show that relative volume maximization is NP hard 
(Section~\ref{s_rvol1}) and admits no PTAS (Section~\ref{s_rvol2}).

The relative volume of a matrix $\mc\in\real^{m\times k}$ with $\rank(\mc)=k$, defined as 
\begin{equation*}
\rvol(\mc) \equiv\frac{\vol(\mc)}{\|\mc\|_2^k}=
\prod_{j=1}^k{\frac{\sigma_j(\mc)}{\sigma_1(\mc)}},
\end{equation*}
where the volume of a matrix $\mc\in\real^{m\times k}$
with $\rank(\mc)=k$ is defined as
\begin{equation}\label{e_vol}
\vol(\mc)\equiv\sqrt{\det(\mc^T\mc)}=
\prod_{j=1}^k{\sigma_j(\mc)}.
\end{equation}
If $\mc$ is square, then $\vol(\mc)=|\det(\mc)|$.

Geometrically, the relative volume is the ratio of the  parallelepiped volume to the volume of the cube that contains the parallelepiped.

\paragraph{\changes{Example}}
\changes{In contrast to the volume, the relative volume has the advantage of being able to detect when a matrix 
is ill-conditioned. For instance, the matrix 
\begin{align*}
\ma=\begin{bmatrix}1/\epsilon & 0 \\ 0 & \epsilon \end{bmatrix}\qquad \text{where}\quad 0<\epsilon\ll 1
\end{align*}
is ill-conditioned with condition number $\kappa_2(\ma)=1/\epsilon^2$. However, the volume  does not detect this,
since $\vol(\ma)=1$,
while the relative volume does, $\rvol(\ma)=\epsilon^2=1/\kappa_2(\mc)$.}

\changes{
\begin{remark}\label{r_1}
The relative volume is inversely proportional
to some power of the two-norm condition number. Let $\mc\in\real^{m\times k}$ with $\rank(\mc)=k$. Then 
\begin{align*}
\left(\frac{1}{\kappa_2(\mc)}\right)^{k-1}\leq \rvol(\mc)\leq \frac{1}{\kappa_2(\mc)}.
\end{align*}
\end{remark}
}

Next, we want to find $k$ columns $\mc$ of $\ma\in\rmn$ 
with maximal relative volume,
$\max_{\mc}{\rvol(\mc)}$.

\subsection{Decision problem}\label{s_rvol1}
Following \cite{Garey}, we state the decision problem for the maximal relative volume as follows:

\begin{quote}
\emph{Instance:} Matrix $\ma \in \real^{m\times n}$ with unit-norm columns 
$\|\ma\ve_j\|_2=1$, $1\leq j\leq n$;
\changes{integer $1\leq k\leq \min\{\rank(\ma), m/3\}$};
and parameter $0< b\leq 1$.\\
\emph{Question:} Is there a column submatrix $\mc\in \mathbb{R}^{m\times k}$ of $\ma$ such that 
\begin{equation}\label{eqn:rvolreform} 
\rvol(\mc)\geq b. 
\end{equation}
\end{quote}

\begin{theorem}\label{thm:rvolnp}
\changes{The decision problem of} relative volume maximization is NP hard.
\end{theorem}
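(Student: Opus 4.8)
The plan is to perform a polynomial-time reduction from the NP-complete problem X3C (Exact Cover by 3-Sets), following the template used in \cite{CM09}. Recall an instance of X3C consists of a ground set $U$ with $|U|=3t$ and a collection $\{S_1,\dots,S_n\}$ of 3-element subsets of $U$; the question is whether there exist $t$ of these subsets that partition $U$. From such an instance I would build a matrix $\ma$ with $3t$ rows (one per element of $U$) and $n$ columns (one per subset $S_j$) by placing nonzero entries in the three rows corresponding to the elements of $S_j$, then normalizing each column to unit two-norm; a natural choice is to put $1/\sqrt{3}$ in each of the three active rows, so the columns are exactly the normalized indicator vectors. I would then set $k=t$ and choose the threshold $b=1$ (the maximal possible relative volume, by the ordering of singular values).

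The crux of the argument is the equivalence: $\ma$ has a column submatrix $\mc\in\real^{3t\times t}$ with $\rvol(\mc)\geq 1$ if and only if the X3C instance has a solution. For the ``if'' direction, if the subsets $S_{j_1},\dots,S_{j_t}$ form an exact cover, then the corresponding columns of $\ma$ have pairwise disjoint supports, hence are orthonormal, so $\vol(\mc)=1=\|\mc\|_2^t$ and $\rvol(\mc)=1$. For the ``only if'' direction, I would argue that $\rvol(\mc)=1$ forces all singular values of $\mc$ to be equal (to $\sigma_1(\mc)$); combined with each column having unit norm, this forces $\sigma_j(\mc)=1$ for all $j$, i.e. $\mc$ has orthonormal columns. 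Since every column of $\ma$ is supported on exactly $3$ rows and there are only $3t$ rows, $t$ mutually orthogonal (hence disjointly supported, given the nonnegativity/structure of the entries) unit columns must partition the row index set into $t$ blocks of size $3$ — precisely an exact cover. This last step is where I would need to be a little careful: orthogonality of the normalized indicator vectors is equivalent to disjoint supports only because the entries are all of one sign within a column, so two columns with overlapping support have strictly positive inner product; I would state this explicitly.

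With the equivalence in hand, I would note that the reduction is clearly polynomial time (the matrix has $O(tn)$ entries, all rational — indeed scaling by $\sqrt3$ to clear denominators is harmless, or one can work with $\ma^T\ma$), and that verifying $\rvol(\mc)\ge b$ for a given $\mc$ reduces to a determinant/singular-value computation, so the decision problem is in NP; therefore relative volume maximization is NP hard (in fact NP complete). The main obstacle I anticipate is not the reduction itself but the ``only if'' direction: I must rule out the possibility that a non-orthonormal submatrix could still achieve $\rvol=1$, which is exactly the content of the (commented-out) Lemma on $\rvol\le 1$ with equality iff orthonormal columns — so I would first reinstate and prove that characterization, then the NP hardness follows cleanly. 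A secondary technical point is ensuring $\rank(\ma)\ge k$ holds for the constructed instance; this is automatic since any single 3-set already contributes a nonzero column and, more to the point, if no $t$ columns are even linearly independent then certainly no exact cover exists, so the instance can be mapped to a trivial ``no.''
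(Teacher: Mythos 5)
Your proposal is correct and follows essentially the same route as the paper: a polynomial-time reduction from X3C via the $1/\sqrt{3}$ indicator-column matrix with $k=M$ and $b=1$, using the characterization that $\rvol(\mc)=1$ forces orthonormal columns (the paper's Lemma on relative volume) together with the orthonormal-submatrix-iff-exact-cover equivalence from \cite{CM09}. The only difference is cosmetic: you spell out the disjoint-support argument and the unit-norm normalization of the singular values explicitly, where the paper cites \cite{CM09} and its lemma.
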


\begin{proof}
Following the proof of~\cite[Proof of Theorem 4]{CM09},
we perform a 
polynomial-time reduction from the 
NP-complete problem \textit{Exact Cover}
\textit{by 3-Sets (X3C)} \cite[Section A.3.1]{Garey} to relative volume maximization.
\begin{quote}
\emph{Instance:}
Set $\mathcal{S} = \{1,\dots,3M\}$ and collection of sets $\mathcal{K} = \{\mathcal{K}_1,\dots,\mathcal{K}_n\}$,
where each $\mathcal{K}_i\subset\mathcal{S}$
has cardinality $3$.\\
\emph{Question:} Is there a subset $\mathcal{K}'\subset \mathcal{K}$ that forms an \textit{exact cover}\footnote{An \textit{exact cover} means that every element of $\mathcal{S}$ appears exactly once in $\mathcal{K}'$.} for~$\mathcal{S}$?
\end{quote}

Assume that
 $\mathcal{S} = \{1,\dots,3M\}$ and a collection of 
 distinct subsets $\mathcal{K} = \{\mathcal{K}_1,\dots,\mathcal{K}_n\}$
 represent an instance of X3C.
We show that this instance  can be solved if we can construct a matrix $\ma$ with a submatrix~$\mc_*$ that 
solves~\eqref{eqn:rvolreform}. 

To this end, let $\ma \in \mathbb{R}^{(3M) \times n}$  
be a matrix where each column corresponds to a 
3-element set $\mathcal{K}_j$, and the non-zero elements in the column
represent the elements of~$\mathcal{K}_j$,
\begin{equation}\label{e_X3C}
a_{ij} = \begin{cases}\frac{1}{\sqrt{3}} & \text{if $i \in \mathcal{K}_j$}  \\ 0 & \text{otherwise}\end{cases},
\qquad 1 \le i \le 3M, \ 1\le j \le n.
\end{equation}
Each column of $\ma$ has exactly 3 non-zero elements
and $\|\ma\ve_j\|_2=1$, $1\leq j\leq n$.
In the reduced instance we set
$m = 3M$, $k=M$, and $b=1$. \changes{To construct matrices $\ma$ with more than $3k$ rows, just append 
the appropriate number of zero rows. This does not change the singular values of $\ma$.}
Clearly, constructing $\ma$ requires polynomial time.


According to~\cite[Proof of Theorem 4]{CM09}, the instance of X3C is true if and only if there exists a submatrix $\mc_*\in \mathbb{R}^{(3M)\times M}$ with orthogonal columns. Lemma~\ref{lem:orth} \changes{in Section~\ref{s_unitnorm2}} implies that  this is equivalent to $\rvol(\mc_*) = 1$. 
\changes{Due to this equivalence and the structure of~$\ma$, determining whether $\rvol(\mc_*)=1$ 
simply amounts to finding orthogonal
columns based on the sparsity pattern of $\mc_*$.} Therefore, the instance of X3C is true if and only if $\rvol(\mc_*) = 1$. Thus, we have constructed a polynomial time reduction from an NP complete problem, establishing that relative volume maximization is NP hard.   
\end{proof}

\subsection{No PTAS for relative volume maximization}\label{s_rvol2}
Lemma~\ref{lem:orth} implies that the maximal relative  
volume equals~1. We derive an inapproximability threshold of~$\sqrt{1/2}$.

\begin{theorem}\label{t_volptas}
It is NP hard to approximate relative volume maximization within a factor of 
$1/\sqrt{2}$. Thus, unless P=NP, there is no PTAS for relative volume maximization.
%
\end{theorem}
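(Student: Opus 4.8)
The plan is to reduce from X3C once more, but now carefully tracking the gap between the "yes" and "no" cases, so that a $(1-\epsilon)$-approximation algorithm (for any $\epsilon < 1 - 1/\sqrt 2$) could distinguish them. The construction of $\ma$ from \eqref{e_X3C} stays essentially the same: each column has unit two-norm and exactly three nonzero entries equal to $1/\sqrt 3$, and we set $m = 3M$, $k = M$. From the proof of Theorem~\ref{thm:rvolnp} (via \cite[Proof of Theorem 4]{CM09}) we know that if X3C has an exact cover, then the corresponding submatrix $\mc_*$ has orthonormal columns, so $\rvol(\mc_*) = 1$; conversely, $\rvol(\mc) = 1$ forces $\mc$ to have orthonormal columns (Lemma~\ref{lem:orth}).

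The heart of the argument is the \textbf{upper bound in the "no" case}: if X3C has \emph{no} exact cover, then \emph{every} column submatrix $\mc \in \real^{(3M)\times M}$ of $\ma$ has $\rvol(\mc) \leq 1/\sqrt 2$. To see this, pick any $M$ columns $\mathcal{K}_{j_1},\dots,\mathcal{K}_{j_M}$. If they were pairwise disjoint they would form an exact cover (they involve $3M$ elements from a $3M$-element set), contradicting the assumption; so two of them, say $\mathcal{K}_{j_a}$ and $\mathcal{K}_{j_b}$, share at least one element. Then the corresponding two columns $\mc\ve_a, \mc\ve_b$ of $\mc$ are unit vectors with inner product $|\langle \mc\ve_a, \mc\ve_b\rangle| \geq 1/3$ (one common index contributes $1/3$; two or three common indices would contribute $2/3$ or $1$, the latter being impossible since the $\mathcal{K}_j$ are distinct). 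The Gram matrix of this $2\times 2$ subblock has eigenvalues $1 \pm t$ with $t = |\langle \mc\ve_a,\mc\ve_b\rangle| \in \{1/3, 2/3\}$, so the two relevant singular values of $\mc$ satisfy $\sigma_{\min}/\sigma_{\max}$ of that pair $\leq \sqrt{(1-t)/(1+t)} \leq \sqrt{(1-1/3)/(1+1/3)} = \sqrt{1/2}$. Since all columns of $\mc$ are unit vectors we have $\sigma_1(\mc) \leq $ (something controllable) and, more to the point, $\rvol(\mc) = \prod_j \sigma_j(\mc)/\sigma_1(\mc)$; one then argues that the presence of a near-dependent pair drags this product down to at most $1/\sqrt 2$. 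The cleanest route is probably via interlacing / a minor bound: $\vol(\mc)^2 = \det(\mc^T\mc) \leq \det$ of the $2\times 2$ diagonal block's Gram determinant times the product of the remaining diagonal entries (Hadamard-type / Fischer's inequality), giving $\vol(\mc)^2 \leq (1 - t^2) \leq 1 - 1/9 = 8/9 < 1$, while $\|\mc\|_2^{2M} \geq 1$; this yields $\rvol(\mc) \leq \sqrt{8/9}$, which is unfortunately larger than $1/\sqrt 2 \approx 0.707$, so I will need the sharper bound using all $M$ diagonal blocks or a more careful eigenvalue estimate to land exactly at $1/\sqrt 2$ — equivalently, showing $\vol(\mc) \le \|\mc\|_2^M / \sqrt 2$ directly from the $1\pm t$ eigenvalue pair by bounding the product of ratios.

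**Assembling the dichotomy:** in the "yes" case the optimum is $1$; in the "no" case the optimum is at most $1/\sqrt 2$. A polynomial-time $(1-\epsilon)$-approximation algorithm with $\epsilon < 1 - 1/\sqrt 2$ would, on the reduced instance, return a value $> 1/\sqrt 2$ exactly when X3C is a "yes" instance, solving X3C in polynomial time. Since X3C is NP-complete, this is impossible unless $P = NP$; in particular no PTAS exists. The \textbf{main obstacle} is getting the constant in the "no" case to be exactly $1/\sqrt 2$ rather than the weaker $\sqrt{8/9}$ — this requires quantifying precisely how one overlapping column pair (with inner product $\geq 1/3$) forces $\rvol \le 1/\sqrt 2$, which means controlling the full product $\prod_{j=1}^M \sigma_j(\mc)/\sigma_1(\mc)$, not just a single $2\times 2$ minor. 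I expect this to come down to the observation that the two columns involved in the overlap, normalized, span a $2$-dimensional subspace whose restricted singular value ratio is $\le 1/\sqrt 2$, combined with the fact that $\sigma_1(\mc) \ge 1$ (since columns are unit vectors, indeed $\sigma_1(\mc)\ge \|\mc\ve_a\|_2 = 1$) and that \emph{all} other ratios $\sigma_j(\mc)/\sigma_1(\mc) \le 1$, so the worst overlapping pair already caps the product at $\sqrt{(1-1/3)/(1+1/3)} = 1/\sqrt 2$.
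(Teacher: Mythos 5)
Your reduction, the yes-case analysis (optimum $1$ via Lemma~\ref{lem:orth}), and the two-column computation are exactly the paper's: the Gram block of an overlapping pair is $\bigl[\begin{smallmatrix}1 & t\\ t & 1\end{smallmatrix}\bigr]$ with $t\in\{1/3,2/3\}$, the pair's singular-value ratio is $\sqrt{(1-t)/(1+t)}$, and the worst case $t=1/3$ gives $1/\sqrt{2}$ (the paper gets the same numbers, $\sigma_1(\hat{\mc})=2/\sqrt{3}$, $\sigma_2(\hat{\mc})=\sqrt{2/3}$ for single overlap and $\sqrt{5/3}$, $\sqrt{1/3}$ for double overlap). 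The final dichotomy argument is also the paper's. So the route is the same; the issue is the step you yourself flag as the main obstacle, and as written it is a genuine gap.

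Your Fischer-type bound ($\rvol(\mc)\le\sqrt{8/9}$) is indeed too weak, and the repair you sketch does not close the gap either: from $\rvol(\mc)=\prod_j\sigma_j(\mc)/\sigma_1(\mc)\le\sigma_M(\mc)/\sigma_1(\mc)$ together with $\sigma_M(\mc)\le\sigma_2(\hat{\mc})=\sqrt{2/3}$ and only $\sigma_1(\mc)\ge 1$, you get $\rvol(\mc)\le\sqrt{2/3}\approx 0.816$, not $1/\sqrt{2}\approx 0.707$. The missing ingredient is the other half of singular value interlacing applied to the overlapping pair: $\sigma_1(\mc)\ge\sigma_1(\hat{\mc})=2/\sqrt{3}$, not merely $\ge 1$. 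Combining both interlacing inequalities gives $\sigma_M(\mc)/\sigma_1(\mc)\le\sigma_2(\hat{\mc})/\sigma_1(\hat{\mc})\le 1/\sqrt{2}$, which is what lands the threshold. The paper packages precisely this as Lemma~\ref{l_inter} (removing columns can only increase the relative volume), so that $\rvol(\mc)\le\rvol(\hat{\mc})=1/\sqrt{2}$ for any submatrix containing an overlapping pair; the paper also notes that when more than two columns overlap, any two-column subblock must still be of one of the two forms, so the pair bound suffices. Once you replace your ``$\sigma_1(\mc)\ge 1$'' step by this two-sided interlacing (or cite Lemma~\ref{l_inter}), your proof is complete and coincides with the paper's.
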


\begin{proof}
This is analogous to the proof of \cite[Theorem 5]{CM09} for volume maximization. We derive an upper bound for $\rvol(\mc)$
for the reduced instance of X3C when it is false.
Lemma~\ref{l_inter} \changes{in Section~\ref{s_unitnorm2}} implies that the relative
volume increases with the removal of columns.
Hence, the largest relative volume is determined by finding
two columns for a false instance of X3C. 

Let $\ma$ be constructed as in (\ref{e_X3C}),
and 
assume that the X3C instance is false. Then any collection $\mathcal{K}$ of cardinality $k=M$ has at least two sets $\mathcal{K}_i$ and $\mathcal{K}_j$ with $i\neq j$ and a non-empty intersection. Since the subsets $\mathcal{K}_j$ are distinct,
the overlap consists of 1 or of 2 elements. Let $\mc\in\real^{(3M)\times M}$ be a 
column submatrix of $\ma\in\real^{(3M)\times n}$
that represents such a collection.
 
\begin{enumerate}
\item Consider the case where the two sets share
a single element, that is,
$|\mathcal{K}_i\cap\mathcal{K}_j|=1$. 
Let the sets be represented by 
columns $\vc_i\equiv\mc\ve_i$ and $\vc_j\equiv \mc\ve_j$. 
The non-zero elements of the submatrix 
$\hat{\mc}\equiv\begin{bmatrix}\vc_i & \vc_j\end{bmatrix}$ are, up to row permutations,
\begin{equation}\label{e_aux1}
\begin{bmatrix} 1/\sqrt{3} & 1/\sqrt{3} & 1/\sqrt{3} & 0 & 0\\ 0 &0 & 1/\sqrt{3} & 1/\sqrt{3}&1/\sqrt{3}
\end{bmatrix}^T\in\real^{5\times 2}.
\end{equation}
With appropriate orthogonal row rotations this matrix can be further reduced to 
\begin{equation*}
\begin{bmatrix} 0 & 1 & 0& 0 & 0\\ 0 &1/3 & \sqrt{8}/3 & 0 &0
\end{bmatrix}^T.
\end{equation*}
Thus $\hat{\mc}$ has the same singular values as the matrix
\begin{equation*}
\begin{bmatrix} 1& 1/3\\ 0 &\sqrt{8}/3\end{bmatrix},
\end{equation*}
whose singular values are $2/\sqrt{3}$ and $\sqrt{2/3}$.
Hence,
$\sigma_1(\hat{\mc})=2/\sqrt{3}$
and $\sigma_2(\hat{\mc})=\sqrt{2/3}$.
Lemma~\ref{l_inter} \changes{in Section~\ref{s_unitnorm2}} implies
\begin{equation*}
\rvol(\mc)\leq \rvol(\hat{\mc})=\sigma_2(\hat{\mc})/\sigma_1(\hat{\mc})=1/\sqrt{2}.
\end{equation*}

\item Now consider the case where the sets share
two elements. That is, if
$|\mathcal{K}_i\cap\mathcal{K}_j|=2$, 
we can find two columns 
$\hat{\mc}\equiv\begin{bmatrix}\vc_i & \vc_j\end{bmatrix}$ of $\mc$ whose non-zero
elements are, up to row permutations,
\begin{equation}\label{e_aux2}
\begin{bmatrix} 1/\sqrt{3} & 1/\sqrt{3} & 1/\sqrt{3} & 0\\ 0 &1/\sqrt{3} & 1/\sqrt{3} & 1/\sqrt{3}
\end{bmatrix}^T\in\real^{4\times 2}.
\end{equation}
After row rotations, the nonzero elements are
\begin{equation*}
\begin{bmatrix} 1& 2/3\\ 0 &\sqrt{5}/3\end{bmatrix},
\end{equation*}
whose singular values are $\sqrt{5/3}$ and $\sqrt{1/3}$.
Hence,
$\sigma_1(\hat{\mc})=\sqrt{5/3}$
and $\sigma_2(\hat{\mc})=\sqrt{1/3}$.
Lemma~\ref{l_inter} \changes{in Section~\ref{s_unitnorm2}} implies
\begin{equation*}
\rvol(\mc)\leq \rvol(\hat{\mc})=\sigma_2(\hat{\mc})/\sigma_1(\hat{\mc})=1/\sqrt{5}.
\end{equation*}
Thus, the relative 
volume of (\ref{e_aux2}) is smaller than the one of (\ref{e_aux1}).
\end{enumerate}

Considering the overlap of only two sets is sufficient for the following reason. 
Suppose that $\ell< k$ columns of $\mc$ overlap,
and they are represented by the submatrix~$\mc_{\ell}$.
Lemma~\ref{l_inter} \changes{in Section~\ref{s_unitnorm2}} implies that $\rvol(\mc)\leq \rvol(\mc_{\ell})$. Let $\mc_2$ be any 2-column submatrix
of~$\mc_{\ell}$.
Again, Lemma~\ref{l_inter} 
implies $\rvol(\mc)\leq \rvol(\mc_{\ell})\leq \rvol(\mc_2)$.
The non-zero elements of $\mc_2$ can only be of the form (\ref{e_aux1})
or~(\ref{e_aux2}). Between the two, (\ref{e_aux1}) achieves 
the largest relative volume $1/\sqrt{2}$, which implies
$\rvol(\mc)\leq 1/\sqrt{2}$.

Summary: As a consequence, it is NP hard to approximate the relative volume by a factor of $1/\sqrt{2}$. If an algorithm for relative volume maximization did indeed return a set of columns $\mc$ with
$\rvol(\mc)>1/\sqrt{2}$, then this instance of X3C would be true. Thus, relative volume maximization could distinguish between true and false instances of X3C,  implying $P=NP$.
\end{proof}

\section{Volume and S-optimality}\label{s_volume}
We show that volume
and S-optimality maximization are NP \changes{hard} (Section~\ref{s_volume1}),
give a rigorous proof of the PTAS inapproximability threshold 
for volume maximization (Section~\ref{s_volume2}), and 
show that S-optimality maximization does not admit a
PTAS (Section~\ref{s_volume3}).

We want to find $k$ columns~$\mc$ of $\ma\in\rmn$ with maximal
volume, $\max_{\mc}{\vol(\mc)}$.
This criterion is NP hard 
and does not admit a PTAS \cite{CM09}.
The best deterministic algorithm~\cite{GuEis96}
is a local volume maximization that produces a matrix~$\mc$ whose singular values are maximal subject to pairwise permutations of columns. 

The S-optimality of a matrix $\mc\in\real^{m\times k}$
with $\rank(\mc)=k$ is defined as
\cite[(3.5)]{SX2016} and \cite[(3.10)]{LCS2024} 
\begin{equation*}
\sopt(\mc)\equiv \left(\frac{\vol(\mc)}{\prod_{i=1}^{k}{\|\mc\ve_i\|_2}}\right)^{1/k}
=\left(\prod_{i=1}^k{\frac{\sigma_i(\mc)}{\|\mc\ve_i\|_2}}\right)^{1/k},
\end{equation*}
and greedy algorithms are presented for maximizing the S-optimality.
We want to find $k$ columns~$\mc$ of $\ma\in\rmn$ with maximal S-optimality,
$\max_{\mc}{\sopt(\mc)}$. 

\subsection{Decision problems}\label{s_volume1}
Given a matrix $\ma\in\rmn$ with unit-norm columns $\|\ma\ve_j\|_2=1$, $1\leq j\leq n$;
\changes{integer $1\leq k\leq \min\{\rank(\ma), m/3\}$};
and parameter $0\leq b\leq 1$.
\begin{enumerate}
\item Volume maximization:\\
Does $\ma$ have a submatrix $\mc\in\real^{m\times k}$ with 
$\vol(\mc)\geq b$?
\item S-optimality maximization:\\Does $\ma$ have a submatrix $\mc\in\real^{m\times k}$ with 
$\sopt(\mc)\geq b$?
\end{enumerate}
The NP hardness of volume maximization was already established in~\cite[Theorem 4]{CM09}. 

\begin{theorem}\label{t_voldec}
The decision problems: volume maximization and S-optimality maximization are NP hard. 
\end{theorem}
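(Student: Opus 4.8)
The plan is to reduce from X3C, exactly following the construction already used in the proof of Theorem~\ref{thm:rvolnp}. Let $\mathcal{S}=\{1,\dots,3M\}$ and $\mathcal{K}=\{\mathcal{K}_1,\dots,\mathcal{K}_n\}$ be an X3C instance, and build $\ma\in\real^{(3M)\times n}$ with columns of unit two-norm as in~\eqref{e_X3C}, setting $m=3M$, $k=M$. The key structural fact, cited from~\cite[Proof of Theorem 4]{CM09}, is that a column submatrix $\mc_*\in\real^{(3M)\times M}$ has orthonormal columns if and only if the corresponding $M$ sets form an exact cover of $\mathcal{S}$: disjoint supports among the $M$ columns force $\mc_*^T\mc_*=\mi_M$, and since the supports have total size $3M=|\mathcal{S}|$, disjointness is equivalent to being a cover.

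For volume maximization, the threshold parameter is $b=1$. Since each column of $\mc$ has unit norm, Hadamard's inequality gives $\vol(\mc)=\sqrt{\det(\mc^T\mc)}\le\prod_{i=1}^k\|\mc\ve_i\|_2=1$, with equality if and only if the columns of $\mc$ are orthonormal. Hence $\ma$ has a submatrix $\mc$ with $\vol(\mc)\ge 1$ if and only if it has one with orthonormal columns, which by the structural fact happens if and only if the X3C instance is a yes-instance. This reduction runs in polynomial time, so volume maximization is NP hard; in fact it lies in NP (given a candidate column subset, $\vol(\mc)\ge b$ is checkable in polynomial time via a determinant), so it is NP complete.

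For S-optimality, observe that for a matrix with unit-norm columns $\sopt(\mc)=\vol(\mc)^{1/k}$, so the S-optimality decision problem with threshold $b$ is literally the volume decision problem with threshold $b^k$; taking $b=1$ again reduces X3C to S-optimality maximization. (Equivalently, $\sopt(\mc)\le 1$ always, with equality exactly when the columns are orthonormal, so the same equivalence chain applies.) Thus both decision problems are NP hard — indeed NP complete, since $\sopt(\mc)$ is also polynomial-time computable.

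The only mild subtlety, and the step that needs care rather than ingenuity, is making sure the reduction is formally a polynomial-time many-one reduction \emph{to a decision problem over rational inputs}: the entries $1/\sqrt{3}$ are irrational, so strictly one should either rescale to clear the $1/\sqrt{3}$ (replace $\ma$ by $\sqrt{3}\,\ma$ and adjust $b$ to $3^{k/2}$, which keeps all data rational and the problem unchanged under scaling), or note that the paper has folded the unit-norm normalization into the problem statement and work with $\ma^T\ma$, whose entries are rational. I would handle this explicitly so the Turing-model framework of the paper is respected. Everything else is immediate from Hadamard's inequality and the cited X3C characterization.
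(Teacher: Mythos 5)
Your proposal is correct and follows essentially the same route as the paper: the X3C reduction via the matrix in~\eqref{e_X3C}, the CM09 characterization of exact covers by orthonormal column submatrices, and the fact that for unit-norm columns $\vol(\mc)\le 1$ and $\sopt(\mc)=\vol(\mc)^{1/k}$ with equality exactly for orthonormal columns (the content of Lemma~\ref{l_vol}, which you re-derive via Hadamard rather than cite). Your added remarks on NP completeness and on rescaling to keep the input rational are sensible refinements but not needed for the paper's NP hardness claim.
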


\begin{proof}
The idea is that the reduction to X3C in Theorem~\ref{thm:rvolnp} only requires orthonormality:
The instance of X3C is true if and only if $\ma$ has a submatrix $\mc\in \mathbb{R}^{(3M)\times M}$ with orthonormal columns. 

As in~(\ref{e_X3C}), we construct the matrix~$\ma\in\real^{3M\times n}$
where each column has exactly~3 non-zero elements and $\|\ma\ve_j\|_2=1$, $1\leq j\leq n$.
According to~\cite[Proof of Theorem 4]{CM09}, the instance of X3C is true if and only if $\ma$ has a submatrix $\mc\in \mathbb{R}^{(3M)\times M}$ with orthonormal columns. Lemma~\ref{l_vol} \changes{in Section~\ref{s_unitnorm1}} implies that  this is equivalent to $\mc$ satisfying the equalities $\vol(\mc) = 1$ or $\sopt(\mc) = 1$. Thus, volume maximization and S-optimality maximization are NP hard.

\end{proof}

\subsection{No PTAS for volume maximization}\label{s_volume2}
Lemma~\ref{l_vol} \changes{in Section~\ref{s_unitnorm1}} implies that a matrix 
with unit-norm columns has maximal volume equal to~1.
We derive an inapproximability threshold of $2\sqrt{2}/3$.

\begin{theorem}\label{t_vol}
It is NP hard to approximate volume maximization within a factor of 
$2\sqrt{2}/3$. Thus, unless P=NP, there is no PTAS for volume maximization.
%
\end{theorem}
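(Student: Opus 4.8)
The plan is to mirror the structure of the relative-volume argument (Theorem~\ref{t_volptas}) but track the ordinary volume $\vol(\mc)=\sigma_1(\mc)\cdots\sigma_k(\mc)$ instead of the normalized one. First I would invoke the same reduction from X3C used in Theorem~\ref{t_voldec}: construct $\ma\in\real^{(3M)\times n}$ as in~\eqref{e_X3C} with unit-norm columns, set $k=M$, and recall that by Lemma~\ref{l_vol} the instance of X3C is true if and only if $\ma$ has a submatrix $\mc_*$ with $\vol(\mc_*)=1$ (equivalently, orthonormal columns). So the maximal volume is exactly $1$ for a true instance, and the task is to derive a strict upper bound strictly below $1$ on $\vol(\mc)$ for every $M$-column submatrix $\mc$ when the instance is false.

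The key step is the false-instance bound. When X3C is false, any collection of $k=M$ sets contains two sets $\mathcal{K}_i,\mathcal{K}_j$ with nonempty intersection, so the corresponding two columns of $\mc$ form a submatrix $\hat{\mc}$ of the form~\eqref{e_aux1} (overlap~$1$) or~\eqref{e_aux2} (overlap~$2$). Unlike the relative volume, ordinary volume does \emph{not} increase when columns are removed, so I cannot simply pass to a $2$-column submatrix; instead I would factor the volume as a product over a block-diagonal structure. Reorder the $M$ columns of $\mc$ so that the pair $\{i,j\}$ comes first; the remaining $M-2$ columns have norms $\le 1$, and more carefully, $\vol(\mc)\le \vol(\hat{\mc})\cdot\prod_{\text{remaining }\ell}\|\mc\ve_\ell\|_2 = \vol(\hat{\mc})$, using that $\vol(\mc)\le \sigma_1(\mc)\cdots$ can be bounded via the sub-multiplicativity of volume under column partitioning (each appended column multiplies the volume by at most the norm of its component orthogonal to the current span, which is at most its full norm~$1$). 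Then from the singular values computed in the proof of Theorem~\ref{t_volptas}: for overlap~$1$, $\sigma_1(\hat{\mc})=2/\sqrt3$ and $\sigma_2(\hat{\mc})=\sqrt{2/3}$, giving $\vol(\hat{\mc})=(2/\sqrt3)\sqrt{2/3}=2\sqrt2/3$; for overlap~$2$, $\vol(\hat{\mc})=\sqrt{5/3}\,\sqrt{1/3}=\sqrt5/3<2\sqrt2/3$. Hence $\vol(\mc)\le 2\sqrt2/3$ for every submatrix in a false instance.

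Combining the two cases: a true instance yields $\max_{\mc}\vol(\mc)=1$, a false instance yields $\max_{\mc}\vol(\mc)\le 2\sqrt2/3<1$. Therefore an algorithm that approximates volume maximization within a factor better than $2\sqrt2/3$ — i.e., returns $\mc$ with $\vol(\mc)>2\sqrt2/3$ — would distinguish true from false X3C instances in polynomial time, forcing $P=NP$; in particular no PTAS exists. I expect the main obstacle to be the bookkeeping in the previous paragraph: justifying $\vol(\mc)\le\vol(\hat\mc)$ rigorously (the non-monotonicity of volume under column deletion means I must argue via the QR/Gram–Schmidt expansion $\vol(\mc)=\prod_\ell \mathrm{dist}(\mc\ve_{\sigma(\ell)},\mathrm{span of earlier columns})$ and bound each distance after the overlapping pair by the column norm~$1$), and making sure the ordering of columns is chosen so the overlapping pair is processed first so that its contribution is exactly $\vol(\hat\mc)$.
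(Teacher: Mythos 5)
Your proposal is correct and follows essentially the same route as the paper: the same X3C reduction, the same base case giving $\vol(\hat{\mc})=2\sqrt{2}/3$ for an overlap of one element (and $\sqrt{5}/3$ for two), and the same key step of extending to all $k$ columns by factoring the Gram determinant so that each additional unit-norm column multiplies the squared volume by the squared distance to the span of the earlier columns, which is at most $1$ — the paper phrases this as an induction via the Schur-complement identity $\vol(\hat{\mc}_k)^2=\det(\hat{\mc}_{k-1}^T\hat{\mc}_{k-1})\,\|\mP\vc\|_2^2$, which is exactly your Gram--Schmidt distance expansion with the overlapping pair processed first.
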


\begin{proof}
The setup is the same as in the proof of Theorem~\ref{t_volptas}.
Assume that the X3C instance is false.

The proof is by induction over the number of columns $k$. 
\begin{itemize}
\item For $k=2$, two distinct sets can overlap in one or in two elements. 
When the sets share a single element, the nonzero elements (\ref{e_aux1}) of the relevant two columns~$\hat{\mc}_2$ 
have singular values 
$\sigma_1(\hat{\mc}_2)=2/\sqrt{3}$ and $\sigma_2(\hat{\mc}_2)=\sqrt{3/2}$.
When the sets share two elements, the nonzero elements
(\ref{e_aux2}) of the relevant two columns $\hat{\mc}_2$
have singular values $\sigma_1(\hat{\mc}_2)=\sqrt{5/3}$ and
$\sigma_2(\hat{\mc}_2)=\sqrt{1/3}$.
Between the two, (\ref{e_aux1}) achieves the largest volume.
Hence, for matrices $\hat{\mc}_2$ with $k=2$ columns, (\ref{e_aux1}) implies that
\begin{equation*}
\vol(\hat{\mc}_2)=\sigma_1(\hat{\mc}_2)\sigma_2(\hat{\mc}_2)
= \tfrac{2}{\sqrt{3}}\sqrt{\tfrac{2}{3}}=\tfrac{2\sqrt{2}}{3}.
\end{equation*}

\item For matrices $\hat{\mc}_{k-1}$ with $k-1$ columns, assume that 
\begin{equation*}
\vol(\hat{\mc}_{k-1})\leq \tfrac{2\sqrt{2}}{3}.
\end{equation*}

\item Let $\hat{\mc}_k\equiv
\begin{bmatrix} \hat{\mc}_{k-1} & \vc\end{bmatrix}$
be a matrix with $k$ columns, where $\|\vc\|_2=1$. 
The Gram matrix is
\begin{equation*}
\hat{\mc}_k^T\hat{\mc}_k=\begin{bmatrix}\hat{\mc}_{k-1}^T\hat{\mc}_{k-1} & \hat{\mc}_{k-1}^T\vc\\
\vc^T\hat{\mc}_{k-1} & \vc^T\vc\end{bmatrix}
\end{equation*}
and \cite[(2)]{Cot74} implies
\begin{equation}\label{e_sc}
\vol(\hat{\mc}_k)^2=\det(\hat{\mc}_{k-1}^T\hat{\mc}_{k-1})
\det(\vc^T\vc-\vc^T\hat{\mc}_k^T\hat{\mc}_k^{\dagger}\vc).
\end{equation}
Since the second factor is a scalar and contains the orthogonal projector $\mP\equiv \mi-\hat{\mc}_k^T\hat{\mc}_k^{\dagger}$, we can write the second factor as
\begin{equation*}
\det(\vc^T\vc-\vc^T\hat{\mc}_k^T\hat{\mc}_k^{\dagger}\vc)=\vc^T\mP\vc=\|\mP\vc\|_2^2\leq \|\vc\|_2^2=1.
\end{equation*}
Inserting this into (\ref{e_sc}) gives
\begin{equation*}
\vol(\hat{\mc}_k)^2\leq \det(\hat{\mc}_{k-1}^T\hat{\mc}_{k-1})=\vol(\hat{\mc}_{k-1})^2\leq 
\left(\tfrac{2\sqrt{2}}{3}\right)^2.
\end{equation*}
\end{itemize}
As in the proof of Theorem~\ref{t_volptas}, we conclude that it is NP hard to approximate the maximal volume within a factor of $2\sqrt{2}/3$. Thus, there is no PTAS unless P=NP.
\end{proof}

Theorem~\ref{t_vol} presents a rigorous proof for the inapproximability threshold 
from~\cite[Theorem 6]{CM09}. 
The exponential inapproximability of the volume is presented in \cite[Theorem 5]{CMI2013}, in the sense that there exists $0<\delta<1$ and $c>0$ such that volume maximization is not
approximable with $2^{-ck}$ for $k=\delta n$ unless $P=NP$.
However, the estimation of the
volume from inner products of selected vectors
in \cite[Proof of Lemma 16]{CMI2013} is not clear.

\subsection{No PTAS for S-optimality maximization}\label{s_volume3}
Lemma~\ref{l_vol} \changes{in Section~\ref{s_unitnorm1}}
implies that a matrix with unit-norm columns has maximal
S-optimality of~$1$. 
We derive an inapproximability threshold of $(2\sqrt{2}/3)^{1/k}$.

\begin{theorem}\label{t_sopt}
It is NP hard to approximate S-optimality maximization within a factor of 
$(2\sqrt{2}/3)^{1/k}$. Thus, unless P=NP, there is no PTAS for S-optimality maximization.
\end{theorem}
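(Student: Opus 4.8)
The plan is to mimic the structure of the proof of Theorem~\ref{t_vol}, exploiting the relationship $\sopt(\mc)=\left(\vol(\mc)/\prod_{i=1}^k\|\mc\ve_i\|_2\right)^{1/k}$. First I would invoke the same reduction from X3C used in Theorem~\ref{t_voldec}: construct $\ma\in\real^{(3M)\times n}$ as in~(\ref{e_X3C}) with $m=3M$, $k=M$, so that every column has unit two-norm, and by Lemma~\ref{l_vol} the X3C instance is true if and only if $\ma$ has a submatrix $\mc$ with orthonormal columns, equivalently $\sopt(\mc)=1$. The key observation is that because \emph{all} columns of $\ma$ (and hence of any submatrix $\mc$) have unit two-norm, the denominator $\prod_{i=1}^k\|\mc\ve_i\|_2=1$, so that on instances arising from this reduction $\sopt(\mc)=\vol(\mc)^{1/k}$ exactly.

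Next I would handle the false case. Assume the X3C instance is false; then any candidate submatrix $\mc\in\real^{(3M)\times M}$ contains two distinct sets with non-empty overlap, and the argument of Theorem~\ref{t_vol} (the induction on $k$ using the Schur-complement identity~(\ref{e_sc}) and $\|\mP\vc\|_2\le\|\vc\|_2=1$) yields $\vol(\mc)\le 2\sqrt{2}/3$. Taking $k$-th roots and using $\sopt(\mc)=\vol(\mc)^{1/k}$ for the reduced instance gives $\sopt(\mc)\le(2\sqrt{2}/3)^{1/k}$ in the false case, while $\sopt(\mc_*)=1$ in the true case. Since $(2\sqrt{2}/3)^{1/k}<1$, a hypothetical approximation algorithm returning $\mc$ with $\sopt(\mc)>(2\sqrt{2}/3)^{1/k}$ would certify a true instance of X3C, so approximating S-optimality maximization within a factor of $(2\sqrt{2}/3)^{1/k}$ is NP hard; in particular no PTAS exists unless $P=NP$.

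The one subtlety worth checking — and the place where I would be most careful — is that the inapproximability \emph{factor} is a function of $k$ rather than a constant, so one must argue this still precludes a PTAS. This is fine: a PTAS would have to achieve, for every fixed $\epsilon>0$, a $(1-\epsilon)$-approximation in polynomial time; since the reduced instances have $k=M$ growing with the X3C instance size and the gap $[(2\sqrt{2}/3)^{1/k},\,1]$ separates false from true instances, an algorithm guaranteeing a factor $1-\epsilon>(2\sqrt{2}/3)^{1/k}$ for all sufficiently large $k$ (which holds for any fixed $\epsilon$, as $(2\sqrt{2}/3)^{1/k}\to 1$) would decide X3C. Hence I would phrase the conclusion exactly as in Theorem~\ref{t_vol}: it is NP hard to approximate $\sopt$ maximization within the factor $(2\sqrt{2}/3)^{1/k}$, and consequently there is no PTAS unless $P=NP$.
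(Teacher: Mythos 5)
Your core argument is exactly the paper's: because every column of the reduced matrix has unit two-norm, $\prod_{i=1}^k\|\mc\ve_i\|_2=1$, so $\sopt(\mc)=\vol(\mc)^{1/k}$ on these instances, and the false-instance bound $\vol(\mc)\le 2\sqrt{2}/3$ from Theorem~\ref{t_vol} immediately gives the threshold $(2\sqrt{2}/3)^{1/k}$ versus the optimal value $1$; the paper's proof is precisely this one-line reduction to Theorem~\ref{t_vol}, so up to that point you match it.

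The final paragraph you added --- exactly the subtlety you flagged --- has the inequality backwards. Since $(2\sqrt{2}/3)^{1/k}\to 1$ as $k\to\infty$, for any \emph{fixed} $\epsilon>0$ one eventually has $(2\sqrt{2}/3)^{1/k}>1-\epsilon$, so the condition $1-\epsilon>(2\sqrt{2}/3)^{1/k}$ that your argument requires \emph{fails} for all sufficiently large $k$, rather than holds. A $(1-\epsilon)$-approximation with fixed $\epsilon$ therefore does not separate true from false reduced instances once $k$ is large, and since X3C is only hard when $M=k$ grows (for fixed $k$ one can enumerate all $k$-subsets in polynomial time), the shrinking gap does not by itself preclude a PTAS the way the constant gap in Theorem~\ref{t_vol} does; it would only rule out schemes whose accuracy parameter can be taken of order $1/k$ at polynomial cost. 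The paper's own proof does not address this point either --- it simply asserts the PTAS conclusion from the $k$-dependent threshold --- so your hardness-of-approximation claim within the factor $(2\sqrt{2}/3)^{1/k}$ stands exactly as in the paper, but the extra justification you supplied for the ``no PTAS'' step is incorrect as written and should either be dropped (matching the paper) or replaced by an argument that genuinely handles the $k$-dependence of the gap.
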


\begin{proof}
This follows from the assumption of $\mc$ having unit-norm
columns, so that 
$\sopt(\ms)=\vol(\mc)^{1/k}$, and from Theorem~\ref{t_vol}.
\end{proof}

\section{Two-norm and Schatten p-norms}\label{s_twop}
We show  that minimization of the two-norm and Schatten $p$-norms for $p>2$ is
NP hard (Section~\ref{s_twop1}), and
that there is no PTAS for two-norm minimization (Section~\ref{s_twop2}).

For $p\geq 1$, the Schatten $p$-norm of a matrix $\ma\in\rmn$ is defined as 
\begin{equation}
   \|\ma\|_{(p)} \equiv \left(\sum_{j=1}^{\min\{m,n\}} \sigma_j(\ma)^p\right)^{1/p}.
\end{equation}
Special cases include the Frobenius norm
$\|\ma\|_{(2)} = \|\ma\|_F$, and the two-norm $\|\ma\|_{(\infty)} = \|\ma\|_2$.
Here we consider only integer values of $p$. 
We want to find $k$ columns $\mc$ of $\ma\in\rmn$ 
with minimal Schatten $p$-norm, $\max_{\mc}{\|\mc\|_{(p)}}$.


\subsection{Decision problems}\label{s_twop1}
Given a matrix $\ma\in\rmn$ with unit-norm columns $\|\ma\ve_j\|_2=1$, $1\leq j\leq n$;
\changes{integer $1\leq k\leq \min\{\rank(\ma),m/3\}$};
and parameter $b>0$.
\begin{enumerate}
\item \emph{Two-norm minimization}:\\
Does $\ma$ have a submatrix $\mc\in\real^{m\times k}$ with 
$\|\mc\|_2 \leq b$?
\item \emph{Schatten $p$-norm  minimization}:\\
For $p>2$, does $\ma$ have a submatrix $\mc\in\real^{m\times k}$ with 
$\|\mc\|_{(p)} \leq b$?
\end{enumerate}

\begin{theorem}\label{t_norm}
    The decision problems: two-norm minimization and Schatten $p$-norm  minimization are NP hard.
\end{theorem}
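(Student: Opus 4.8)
The plan is to reduce from the NP-complete problem \textit{Exact Cover by 3-Sets (X3C)}, reusing the matrix construction from~(\ref{e_X3C}) exactly as in Theorems~\ref{thm:rvolnp} and~\ref{t_voldec}. Given an instance of X3C with $\mathcal{S}=\{1,\dots,3M\}$ and distinct 3-element subsets $\mathcal{K}=\{\mathcal{K}_1,\dots,\mathcal{K}_n\}$, build $\ma\in\real^{(3M)\times n}$ with $a_{ij}=1/\sqrt{3}$ if $i\in\mathcal{K}_j$ and $0$ otherwise, set $m=3M$, $k=M$. Every column has unit two-norm. The key fact, from~\cite[Proof of Theorem 4]{CM09}, is that the X3C instance is true if and only if $\ma$ has a column submatrix $\mc\in\real^{(3M)\times M}$ with orthonormal columns. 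So the whole proof reduces to: exhibit a threshold $b$ for each norm such that a submatrix $\mc$ has $\|\mc\|_{(p)}\le b$ (resp.\ $\|\mc\|_2\le b$) if and only if its columns are orthonormal.

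For the two-norm, a submatrix with unit-norm columns always has $\|\mc\|_2\ge 1$, with equality exactly when the columns are orthonormal (if they were not, some singular value would exceed $1$ since the Gram matrix would have an off-diagonal entry forcing its largest eigenvalue above $1$ while the trace stays at $k$). So set $b=1$: $\|\mc\|_2\le 1$ iff $\|\mc\|_2=1$ iff $\mc$ has orthonormal columns iff the X3C instance is true. For the Schatten $p$-norm with $p>2$, note $\|\mc\|_{(p)}^p=\sum_{j=1}^k\sigma_j(\mc)^p$; subject to the constraint $\sum_j\sigma_j(\mc)^2=\|\mc\|_F^2=k$ (which holds because all columns have unit norm), the function $\sum_j\sigma_j^p$ with $p>2$ is strictly Schur-convex, hence minimized exactly when all $\sigma_j$ are equal, i.e.\ all equal to $1$, i.e.\ $\mc$ has orthonormal columns. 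The minimum value is then $k^{1/p}=M^{1/p}$, so set $b=M^{1/p}$: $\|\mc\|_{(p)}\le M^{1/p}$ iff $\mc$ has orthonormal columns iff the X3C instance is true. In both cases the reduction is clearly polynomial-time (the matrix has $3Mn$ entries, each a fixed rational), completing the argument.

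I expect the main technical point to be the ``equality case'' of each inequality --- i.e., carefully justifying that $\|\mc\|_2=1$ (resp.\ $\|\mc\|_{(p)}=M^{1/p}$) forces orthonormal columns, not merely that orthonormality achieves the bound. This is exactly where the unit-norm-columns hypothesis is used: it fixes $\|\mc\|_F^2=k$, which pins down the power-sum optimization. For the two-norm one can argue directly via the Gram matrix $\mc^T\mc$, which is positive definite with all diagonal entries $1$; if any off-diagonal entry is nonzero then $\|\mc^T\mc\|_2>1$, hence $\|\mc\|_2>1$. For the Schatten case the Schur-convexity / equality-in-power-mean argument does the job. One minor subtlety worth flagging (but not an obstacle): the earlier theorems invoke a ``Lemma~\ref{lem:orth}'' and ``Lemma~\ref{l_vol}'' characterizing when the relevant quantity attains its extreme value; the analogous statement here should presumably be packaged as a companion lemma in Section~\ref{s_unitnorm} establishing that $\|\mc\|_2\ge\|\mc\ve_1\|_2$-type bounds hold with equality iff the columns are orthonormal, and the proof of Theorem~\ref{t_norm} would simply cite it, mirroring the structure of Theorems~\ref{thm:rvolnp} and~\ref{t_voldec}.
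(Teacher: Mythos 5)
Your proposal is correct and follows essentially the same route as the paper: the identical X3C reduction with the matrix from (\ref{e_X3C}), combined with the characterization that $\|\mc\|_2=1$ (resp.\ $\|\mc\|_{(p)}=k^{1/p}$ for $p>2$) holds exactly for orthonormal columns, which the paper indeed packages as Lemma~\ref{l_norm} in Section~\ref{s_unitnorm} and cites in a one-line proof mirroring Theorem~\ref{t_voldec}. Your equality-case arguments (Gram-matrix off-diagonal for the two-norm, Schur-convexity/power-mean for the Schatten $p$-norm) are harmless variants of the paper's trace argument and its auxiliary-function $g_p$ argument.
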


\begin{proof}
    The proof is the same as that of Theorem~\ref{t_voldec}, except we use Lemma~\ref{l_norm} \changes{in Section~\ref{s_unitnorm3}} instead.
\end{proof}

Remark~\ref{r_schattenp} \changes{in Section~\ref{s_unitnorm3}} explains the requirement $p > 2$:
The meaningful bound for $p<2$ is an upper bound instead of a lower bound.

In the case $p=2$, minimizing the Frobenius norm 
can be accomplished by a polynomial time algorithm 
that relies on the identity 
\[ \|\mc\|_F^2 = \sum_{j=1}^k\|\mc\ve_j\|_2^2, \]
and constructs $\mc$ from $k$ columns of $\ma$ with smallest two-norm.

\subsection{No PTAS for two-norm minimization}\label{s_twop2}
Lemma~\ref{l_norm} \changes{in Section~\ref{s_unitnorm3}} 
implies that a matrix with unit-norm columns has a minimal two-norm equal to~1.
We give a rigorous derivation of the 
inapproximability threshold 
$2/\sqrt{3}$ from~\cite[Theorem 5]{CM09}.

\begin{theorem}\label{t_2m}
It is NP hard to approximate two-norm minimization within a factor of $2/\sqrt{3}$. 
Thus, unless P=NP, there is no PTAS for two-norm minimization. 
\end{theorem}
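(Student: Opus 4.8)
The plan is to mirror the structure used for relative volume maximization in Theorem~\ref{t_volptas}: work with the same X3C reduction and matrix $\ma$ defined in~\eqref{e_X3C}, where each column has exactly three nonzero entries equal to $1/\sqrt{3}$, and set $m=3M$, $k=M$, $b=1$. When the X3C instance is true, there is a submatrix $\mc_*\in\real^{(3M)\times M}$ with orthonormal columns, hence $\|\mc_*\|_2=1$ (the optimal/minimal value, by Lemma~\ref{l_norm}). When the X3C instance is false, I want to show every $M$-column submatrix $\mc$ has $\|\mc\|_2\geq 2/\sqrt{3}$, creating a gap between $1$ and $2/\sqrt{3}$.

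First I would argue the false case. If the instance is false, any collection of $k=M$ sets must contain two distinct sets $\mathcal{K}_i,\mathcal{K}_j$ with nonempty intersection, overlapping in one or two elements. Let $\hat{\mc}=\begin{bmatrix}\vc_i & \vc_j\end{bmatrix}$ be the corresponding two-column submatrix of $\mc$. In the single-overlap case, $\hat{\mc}$ has the nonzero structure~\eqref{e_aux1}, whose singular values were already computed in the proof of Theorem~\ref{t_volptas} to be $2/\sqrt{3}$ and $\sqrt{2/3}$; in the double-overlap case, structure~\eqref{e_aux2} gives singular values $\sqrt{5/3}$ and $\sqrt{1/3}$. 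In both cases $\sigma_1(\hat{\mc})\geq 2/\sqrt{3}$, since $2/\sqrt{3}=\sqrt{4/3}<\sqrt{5/3}$. The key monotonicity fact is that appending columns can only increase the two-norm: $\|\mc\|_2=\sigma_1(\mc)\geq\sigma_1(\hat{\mc})$ because $\hat{\mc}$ is a submatrix of $\mc$ and singular value interlacing~\eqref{e_inter} (equivalently Lemma~\ref{l_inter}) applies. Hence $\|\mc\|_2\geq 2/\sqrt{3}$ for every $M$-column submatrix in the false case.

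Then I would close the argument exactly as in Theorem~\ref{t_volptas}: the minimal two-norm is $1$ for true instances and at least $2/\sqrt{3}$ for false instances. If a polynomial-time algorithm could approximate two-norm minimization within a factor better than $2/\sqrt{3}$ — i.e., return a submatrix $\mc$ with $\|\mc\|_2<2/\sqrt{3}$ whenever the true minimum is $1$ — then it would distinguish true from false X3C instances, forcing $P=NP$. Since a PTAS would in particular yield a $(1+\epsilon)$-approximation for every $\epsilon>0$, taking $\epsilon$ with $1+\epsilon<2/\sqrt{3}$ rules out a PTAS unless $P=NP$.

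I expect no serious obstacle here, since all the needed singular value computations are reused from Theorem~\ref{t_volptas}; the only point requiring a word of care is the direction of the monotonicity — for the two-norm, adding columns increases $\sigma_1$ (so the "worst" submatrix for a lower bound is the smallest overlapping pair), which is opposite to the relative-volume case where removing columns helps, so I would state the interlacing step explicitly rather than cite Lemma~\ref{l_inter} verbatim.
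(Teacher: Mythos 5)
Your proposal is correct and takes essentially the same route as the paper: the same X3C reduction, the same singular-value computations for the overlap patterns \eqref{e_aux1} and \eqref{e_aux2} giving largest singular values $2/\sqrt{3}$ and $\sqrt{5/3}$, and the same gap argument between the optimal value $1$ and the threshold $2/\sqrt{3}$. The only cosmetic difference is that the paper packages the monotonicity $\|\mc\|_2\geq\|\hat{\mc}\|_2$ as an induction over the number of columns with interlacing at each step, whereas you apply singular value interlacing \eqref{e_inter} once, directly to the two-column overlapping submatrix --- both arguments rest on the same fact.
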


\begin{proof}
The setup is the same as in the proof of Theorem~\ref{t_volptas}, and we present an induction over
the number of columns $k$ in $\mc$ to show that $\|\mc\|_2\geq 2/\sqrt{3}$.

\begin{itemize}
\item For $k=2$, two distinct sets can overlap in one or
in two elements. 
When the sets share a single element, the nonzero elements (\ref{e_aux1}) of the relevant two columns
$\hat{\mc}_2$ have largest singular value 
$\|\hat{\mc}_2\|_2=\sigma_1(\hat{\mc}_2)=2/\sqrt{3}$.
When the sets share two elements, the nonzero elements
(\ref{e_aux2}) of the relevant two columns $\hat{\mc}_2$
have largest singular value $\|\hat{\mc}_2\|_2=\sigma_1(\hat{\mc}_2)=\sqrt{5/3}$. Between the two, (\ref{e_aux1}) represents the smaller of the two large singular values.
Hence, for matrices $\hat{\mc}_2$ with $k=2$ columns
$\|\hat{\mc}_2\|_2=\sigma_1(\hat{\mc}_2)\geq 2/\sqrt{3}$.

\item For matrices $\hat{\mc}_{k-1}$ with $k-1$ columns,
assume that 
\begin{equation*}
\|\hat{\mc}_{k-1}\|_2=\sigma_{1}(\hat{\mc}_{k-1})\geq 
2/\sqrt{3}.
\end{equation*}

\item Let $\hat{\mc}_k\equiv
\begin{bmatrix} \hat{\mc}_{k-1} & \vc\end{bmatrix}$
be a matrix with $k$ columns. From
singular value interlacing \cite[Corollary 8.6.3]{GovL13}
follows
$\sigma_1(\hat{\mc}_{k})\geq
\sigma_{1}(\hat{\mc}_{k-1})$. Together with the above induction hypothesis this implies
\begin{equation*}
\|\hat{\mc}_{k}\|_2=\sigma_{1}(\hat{\mc}_{k})\geq 
\sigma_1(\hat{\mc}_{k-1})\geq 2/\sqrt{3}.
\end{equation*}
\end{itemize}$\quad$
\end{proof}

\section{Pseudo-inverse norms}\label{s_psnorm}
We show that minimizing the Schatten $p$-norms of the pseudo-inverse is NP hard
(Section~\ref{s_psnorm1}), 
derive a new approximability threshold for minimizing the two-norm
of the pseudo-inverse (Section~\ref{s_psnorm2}), and show that 
the Frobenius norm (Section~\ref{s_psnorm3})
and Schatten $p$-norm (Section~\ref{s_psnorm4})
minimization of the pseudo-inverse do
not admit a PTAS.

We want to find $k$ columns $\mc$ of $\ma\in\rmn$ whose
pseudo-inverse has minimal Schatten $p$-norm, 
$\min_{\mc}{\|\mc^{\dagger}\|_{(p)}}$.

\subsection{Decision problems}\label{s_psnorm1}
Given a matrix $\ma\in\rmn$ with unit-norm columns $\|\ma\ve_j\|_2=1$, $1\leq j\leq n$;
\changes{integer $1\leq k\leq\min\{\rank(\ma),m/3\}$};
and parameter $b>0$.
\begin{enumerate}
\item Two-norm pseudo-inverse minimization:\\
Does $\ma$ have a submatrix $\mc\in\real^{m\times k}$ with 
$\|\mc^\dagger\|_2 \le b$?
\item Frobenius-norm pseudo-inverse minimization:\\
Does $\ma$ have a submatrix $\mc\in\real^{m\times k}$ with 
$\|\mc^\dagger\|_F \le b$?
\item Schatten $p$-norm pseudo-inverse minimization:\\
Does $\ma$ have a submatrix $\mc\in\real^{m\times k}$ with 
$\|\mc^\dagger\|_{(p)} \le b$?
\end{enumerate} 

\begin{theorem}\label{t_pi7}
    The decision problems: minimizing the pseudo-inverse norm for the two-norm, Frobenius norm, and Schatten $p$-norms for $p>2$ are NP hard.
\end{theorem}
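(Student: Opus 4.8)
The plan is to reuse, essentially verbatim, the X3C reduction already employed in Theorems~\ref{thm:rvolnp}, \ref{t_voldec} and~\ref{t_norm}. That reduction has the key feature that the matrix it produces attains the relevant extremal value \emph{only} through a submatrix with orthonormal columns, and this is exactly what we need again: for a submatrix with unit-norm columns, all three pseudo-inverse norms in question are minimized precisely by the orthonormal submatrices, so detecting the threshold is the same as detecting an exact cover.

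The first ingredient is the optimal-value lemma for pseudo-inverse norms derived in Section~\ref{s_unitnorm} (the pseudo-inverse counterpart of Lemma~\ref{l_vol} and Lemma~\ref{l_norm}). I would use it in the form: if $\mc\in\real^{m\times k}$ has $\rank(\mc)=k$ and unit-norm columns, then $\|\mc^\dagger\|_2\geq 1$, $\|\mc^\dagger\|_F\geq\sqrt{k}$, and $\|\mc^\dagger\|_{(p)}\geq k^{1/p}$, with equality in any of these if and only if the columns of $\mc$ are orthonormal. The bounds follow from $\sigma_j(\mc^\dagger)=1/\sigma_j(\mc)$ together with the normalization $\sum_{j=1}^{k}\sigma_j(\mc)^2=\|\mc\|_F^2=k$: applying Jensen's inequality to the convex map $t\mapsto t^{-p/2}$ at the points $t_j=\sigma_j(\mc)^2$ gives $\tfrac1k\sum_j\sigma_j(\mc)^{-p}\geq 1$, and equality forces all $\sigma_j(\mc)^2$ equal, hence all equal to~$1$ by the normalization, i.e. $\mc^T\mc=\mi_k$. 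The two-norm and Frobenius cases are the specializations $p\to\infty$ and $p=2$.

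With that in hand the reduction is immediate. Given an X3C instance $\mathcal{S}=\{1,\dots,3M\}$ with distinct $3$-element subsets $\mathcal{K}_1,\dots,\mathcal{K}_n$, I would build $\ma\in\real^{(3M)\times n}$ exactly as in~\eqref{e_X3C}, set $k=M$, and take the threshold $b=1$ for two-norm pseudo-inverse minimization, $b=\sqrt{M}$ for the Frobenius case, and $b=M^{1/p}$ for the Schatten $p$-norm case; this is clearly polynomial in the input size. By~\cite[Proof of Theorem 4]{CM09} the X3C instance is a yes-instance if and only if $\ma$ has a submatrix $\mc\in\real^{(3M)\times M}$ with orthonormal columns, and by the lemma above this happens if and only if $\mc$ attains the respective threshold with equality, i.e. $\|\mc^\dagger\|_2=1$, or $\|\mc^\dagger\|_F=\sqrt{M}$, or $\|\mc^\dagger\|_{(p)}=M^{1/p}$. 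Hence a polynomial-time solver for any of the three decision problems would decide X3C in polynomial time, so all three are NP hard.

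The only non-routine step is the optimal-value lemma, and within it the equality characterization: ruling out a non-orthonormal unit-column submatrix that meets the threshold. This is entirely a statement about the equality case of the Jensen/power-mean inequality for $t\mapsto t^{-p/2}$, and since it is established in Section~\ref{s_unitnorm}, Theorem~\ref{t_pi7} reduces to the copy-and-paste reduction above; the restriction to $p>2$ is inherited from the conventions used there for Schatten $p$-norms.
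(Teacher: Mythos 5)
Your proposal is correct and follows essentially the same route as the paper: the X3C reduction of Theorem~\ref{t_voldec} combined with the equality characterization of the optimal pseudo-inverse norms (Lemma~\ref{l_pinv}), with the thresholds $1$, $\sqrt{M}$, $M^{1/p}$. Your Jensen-inequality justification of that lemma is a slightly different (and perfectly valid) derivation than the one the paper gives in Section~\ref{s_unitnorm4}, but since the lemma is already established there, the proof of Theorem~\ref{t_pi7} itself coincides with the paper's.
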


\begin{proof}
    The proof for all the norms is the same as that of Theorem~\ref{t_voldec} but uses Lemma~\ref{l_pinv} instead. 
    
\end{proof}

\subsection{No PTAS for two-norm minimization of the pseudo-inverse}\label{s_psnorm2}
Lemma~\ref{l_pinv} \changes{in Section~\ref{s_unitnorm5}} implies that the pseudo-inverse of a matrix 
with unit-norm columns has minimal two-norm of~1.
We derive an inapproximability threshold of~$\sqrt{3/2}$.

\begin{theorem}\label{t_ssv}
It is NP hard to approximate two-norm minimization of the pseudo-inverse within a factor of 
$2/\sqrt{3}$. Thus, unless P=NP, there is no PTAS for two-norm minimization of the pseudo-inverse.
\end{theorem}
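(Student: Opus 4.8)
The plan is to mimic the structure of the preceding PTAS-inapproximability proofs (Theorems~\ref{t_volptas} and~\ref{t_vol}): reuse the X3C reduction with the matrix $\ma$ from~(\ref{e_X3C}), establish that a \emph{true} instance yields a submatrix $\mc_*$ with $\|\mc_*^{\dagger}\|_2 = 1$ (orthonormal columns, via Lemma~\ref{l_pinv}), and then show that a \emph{false} instance forces $\|\mc^{\dagger}\|_2 \geq 2/\sqrt{3}$ for every column submatrix $\mc\in\real^{(3M)\times M}$. Since $\|\mc^{\dagger}\|_2 = 1/\sigma_k(\mc)$, the second claim is equivalent to $\sigma_{\min}(\mc) \leq \sqrt{3}/2$ whenever the instance is false. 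This is the crux, and it is where the argument diverges from the two-norm case: interlacing makes the \emph{largest} singular value monotone under appending columns, but the \emph{smallest} singular value can only be controlled in the opposite direction.

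First I would reduce to two columns. If the X3C instance is false, any collection of $k=M$ distinct 3-sets must contain two sets $\mathcal{K}_i,\mathcal{K}_j$ with nonempty overlap; let $\hat{\mc}_2=\begin{bmatrix}\vc_i & \vc_j\end{bmatrix}$ be the corresponding two-column submatrix, which (up to row permutations and orthogonal row rotations) is one of the two normal forms already computed in the proof of Theorem~\ref{t_volptas}: the single-overlap case~(\ref{e_aux1}) with singular values $2/\sqrt{3}$ and $\sqrt{2/3}$, or the double-overlap case~(\ref{e_aux2}) with singular values $\sqrt{5/3}$ and $\sqrt{1/3}$. Hence $\sigma_2(\hat{\mc}_2)\leq\sqrt{2/3}=\sqrt{3}/2\cdot(2/\sqrt{3})\cdot\sqrt{3}/2$—more directly, $\sigma_2(\hat{\mc}_2)\in\{\sqrt{2/3},\sqrt{1/3}\}$, and the larger of these is $\sqrt{2/3}$, attained in the single-overlap case. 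By singular value interlacing \cite[Corollary 8.6.3]{GovL13}, appending the remaining $M-2$ columns to $\hat{\mc}_2$ to recover $\mc$ can only decrease the smallest singular value: $\sigma_{\min}(\mc)=\sigma_M(\mc)\leq\sigma_2(\hat{\mc}_2)\leq\sqrt{2/3}$. Therefore $\|\mc^{\dagger}\|_2=1/\sigma_M(\mc)\geq\sqrt{3/2}=\ldots$. Wait—$1/\sqrt{2/3}=\sqrt{3/2}$, which is strictly larger than $2/\sqrt{3}=\sqrt{4/3}$, so in fact the gap is even wider than the claimed threshold; I would state the weaker but cleaner bound $2/\sqrt{3}$ to match the paper's stated theorem, noting $\sqrt{3/2}>2/\sqrt{3}$, or simply record the sharper value $\sqrt{3/2}$ consistently with the section heading ``We derive an inapproximability threshold of~$\sqrt{3/2}$.''

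The main obstacle—and the reason this is \emph{not} just a transcription of Theorem~\ref{t_2m}—is getting the direction of interlacing right: for $\|\mc\|_2$ one argues $\sigma_1$ grows as columns are appended, so the two-column lower bound propagates to $k$ columns trivially; here one needs $\sigma_{\min}$ to \emph{shrink} as columns are appended, which is exactly the other half of the interlacing inequality $\sigma_{j+1}(\hat{\mc}_{k})\leq\sigma_j(\hat{\mc}_{k-1})$, applied with $j=k-1$. No induction is actually needed—a single application of interlacing from the two-column submatrix to the full $M$-column submatrix suffices—but I would phrase it carefully to make clear that the existence of \emph{some} overlapping pair (guaranteed by falsity of X3C and distinctness of the $\mathcal{K}_i$, pigeonholed from $3M$ elements covered by $M$ three-sets with a repeat) is all that is required. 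I would close with the standard ``gap'' paragraph: a PTAS returning $\mc$ with $\|\mc^{\dagger}\|_2 < 2/\sqrt{3}$ would distinguish true from false X3C instances, forcing $P=NP$.

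\begin{proof}
The setup is the same as in the proof of Theorem~\ref{t_volptas}: given an instance of X3C, construct $\ma\in\real^{(3M)\times n}$ as in~(\ref{e_X3C}) with unit-norm columns, and set $k=M$. By Lemma~\ref{l_pinv}, if the instance is true then $\ma$ has a submatrix $\mc_*\in\real^{(3M)\times M}$ with orthonormal columns, so $\|\mc_*^{\dagger}\|_2=1$.

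Now suppose the instance is false, and let $\mc\in\real^{(3M)\times M}$ be any column submatrix of $\ma$ representing a collection $\{\mathcal{K}_{i_1},\dots,\mathcal{K}_{i_M}\}$ of $M$ distinct 3-element subsets of $\mathcal{S}=\{1,\dots,3M\}$. If these sets were pairwise disjoint, they would cover all $3M$ elements exactly once, contradicting falsity; hence some pair $\mathcal{K}_i,\mathcal{K}_j$ with $i\neq j$ has nonempty intersection, and by distinctness $|\mathcal{K}_i\cap\mathcal{K}_j|\in\{1,2\}$. Let $\hat{\mc}_2=\begin{bmatrix}\vc_i & \vc_j\end{bmatrix}$ be the corresponding two columns of $\mc$. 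As computed in the proof of Theorem~\ref{t_volptas}, up to row permutations and orthogonal row rotations $\hat{\mc}_2$ has the same singular values as
\begin{equation*}
\begin{bmatrix} 1& 1/3\\ 0 &\sqrt{8}/3\end{bmatrix}
\quad\text{or}\quad
\begin{bmatrix} 1& 2/3\\ 0 &\sqrt{5}/3\end{bmatrix},
\end{equation*}
namely $\{2/\sqrt{3},\sqrt{2/3}\}$ when $|\mathcal{K}_i\cap\mathcal{K}_j|=1$, and $\{\sqrt{5/3},\sqrt{1/3}\}$ when $|\mathcal{K}_i\cap\mathcal{K}_j|=2$. In either case $\sigma_2(\hat{\mc}_2)\leq\sqrt{2/3}$.

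Applying singular value interlacing \cite[Corollary 8.6.3]{GovL13} to $\mc$, obtained from $\hat{\mc}_2$ by appending $M-2$ further columns, gives
\begin{equation*}
\sigma_M(\mc)\leq \sigma_2(\hat{\mc}_2)\leq \sqrt{2/3}.
\end{equation*}
Since $\rank(\mc)=M$ by Lemma~\ref{l_pinv} (the submatrix has $M$ nonzero singular values) and $\|\mc^{\dagger}\|_2=1/\sigma_M(\mc)$, we conclude
\begin{equation*}
\|\mc^{\dagger}\|_2 \geq \sqrt{3/2} > 2/\sqrt{3}.
\end{equation*}

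Thus, if the X3C instance is false, every column submatrix $\mc$ satisfies $\|\mc^{\dagger}\|_2\geq 2/\sqrt{3}$, while if it is true there exists $\mc_*$ with $\|\mc_*^{\dagger}\|_2=1$. Hence an algorithm that approximates two-norm minimization of the pseudo-inverse within a factor of $2/\sqrt{3}$ would return, for a true instance, a submatrix with $\|\mc^{\dagger}\|_2<2/\sqrt{3}$, and for a false instance it could not, so it would distinguish true from false instances of X3C in polynomial time. This implies $P=NP$, and therefore no PTAS for two-norm minimization of the pseudo-inverse exists unless $P=NP$.
\end{proof}
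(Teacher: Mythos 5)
Your proof is correct and follows essentially the same route as the paper: the X3C reduction, the two overlap normal forms (\ref{e_aux1}) and (\ref{e_aux2}), and singular value interlacing to push the two-column bound $\sigma_2(\hat{\mc}_2)\leq\sqrt{2/3}$ up to the full $M$-column submatrix, giving $\|\mc^{\dagger}\|_2\geq\sqrt{3/2}$ for false instances versus $1$ for true ones. The only difference is cosmetic -- the paper telescopes the interlacing inequality by induction over the number of columns, whereas you apply interlacing once from the two-column submatrix to $\mc$ -- and your remark that the derived threshold is really $\sqrt{3/2}$ (stronger than the stated factor $2/\sqrt{3}$) agrees with the paper's own lead-in to the theorem.
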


\begin{proof}
The setup is the same as in the proof of Theorem~\ref{t_volptas}, and we present an induction over
the number of columns $k$ in $\mc$ to show that $\|\mc^{\dagger}\|_2\geq \sqrt{3/2}$.

\begin{itemize}
\item For $k=2$, two distinct sets can overlap in one or
in two elements. 
When the sets share a single element, the nonzero elements (\ref{e_aux1}) of the relevant two columns~$\hat{\mc}_2$ 
have smallest singular value 
$\sigma_2(\hat{\mc}_2)=\sqrt{2/3}$.
When the sets share two elements, the nonzero elements
(\ref{e_aux2}) of the relevant two columns~$\hat{\mc}_2$
have smallest singular value $\sigma_2(\hat{\mc}_2)=\sqrt{1/3}$. Between the two, (\ref{e_aux1}) represents the largest of the two small singular values.
Hence, for matrices $\hat{\mc}_2$ with two columns
$\|\hat{\mc}_2^{\dagger}\|_2=1/\sigma_2(\hat{\mc}_2)\geq\sqrt{3/2}$.

\item For matrices $\hat{\mc}_{k-1}$ with $k-1$ columns,
assume that 
\begin{equation*}
\|\hat{\mc}_{k-1}^{\dagger}\|_2=\sigma_{k-1}(\hat{\mc}_{k-1})\geq\sqrt{3/2}.
\end{equation*}

\item Let $\hat{\mc}_k\equiv
\begin{bmatrix} \hat{\mc}_{k-1} & \vc\end{bmatrix}$
be a matrix with $k$ columns. From
singular value interlacing \cite[Corollary 8.6.3]{GovL13}
follows
$\sigma_{k-1}(\hat{\mc}_{k-1})\geq
\sigma_{k}(\hat{\mc}_{k})$. Together with the above induction hypothesis this implies
\begin{equation*}
\|\hat{\mc}_{k}^{\dagger}\|_2=1/\sigma_{k}(\hat{\mc}_{k})\geq 1/\sigma_{k-1}(\hat{\mc}_{k-1})
\geq\sqrt{3/2}.
\end{equation*}
\end{itemize}$\quad$
\end{proof}

The inapproximability threshold of~$\sqrt{3/2}$
in Theorem~\ref{t_ssv}
differs from the one in~\cite[Theorem 7]{CM09},
which is $(\sqrt{3/2})^{\frac{1}{k-1}}$. The discrepancy
seems to be due to the estimation of the
smallest singular value from the volume.

\subsection{No PTAS for Frobenius norm minimization of the pseudo-inverse}\label{s_psnorm3}
Lemma~\ref{l_pinv} \changes{in Section~\ref{s_unitnorm4}}
implies that the pseudo-inverse of a matrix with $k$ unit-norm columns has a minimal Frobenius norm of~$\sqrt{k}$. 
We derive an inapproximability threshold of~$\sqrt{k+\tfrac{1}{4}}$.

\begin{theorem}\label{t_pi}
It is NP hard to approximate Frobenius norm minimization of the pseudo-inverse within a factor of 
$\sqrt{1+\tfrac{1}{4k}}$. Thus, unless P=NP, there is no PTAS for Frobenius norm minimization of the pseudo-inverse.
\end{theorem}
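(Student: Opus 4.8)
The plan is to mirror the structure of the proofs of Theorem~\ref{t_vol} and Theorem~\ref{t_ssv}: we again work with the X3C reduction matrix $\ma$ from~(\ref{e_X3C}), assume the X3C instance is false, and show that every column submatrix $\mc\in\real^{(3M)\times M}$ then satisfies a lower bound on $\|\mc^\dagger\|_F$ that is strictly larger than the optimal value $\sqrt{k}$ attained in a true instance (Lemma~\ref{l_pinv}). Because the optimum for a true instance is $\sqrt{k}$ and the false-instance bound will be $\sqrt{k+\tfrac14}$, the ratio is $\sqrt{k+\tfrac14}/\sqrt{k}=\sqrt{1+\tfrac{1}{4k}}$, which gives the claimed factor; as in the earlier proofs, any algorithm returning $\mc$ with $\|\mc^\dagger\|_F<\sqrt{k+\tfrac14}$ would decide X3C, forcing $P=NP$.

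The analytic core is to establish, for a false instance, that $\|\mc^\dagger\|_F^2\ge k+\tfrac14$ for every $M$-column submatrix $\mc$. First I would reduce to the two-overlapping-column case exactly as in Theorem~\ref{t_volptas}: if the instance is false, $\mc$ contains two distinct columns $\vc_i,\vc_j$ with a nonempty intersection, hence of the form~(\ref{e_aux1}) (one shared element) or~(\ref{e_aux2}) (two shared elements). Unlike the two-norm case, removing columns does \emph{not} monotonically help here, so the right tool is additivity: the plan is to invoke the partitioned pseudo-inverse identity (the commented-out Lemma~\ref{l_fi}, or its replacement in Section~\ref{s_partinv}), which gives $\|\mc^\dagger\|_F^2\ge\|\mc_1^\dagger\|_F^2+\|\mc_2^\dagger\|_F^2$ for any column partition $\mc=\begin{bmatrix}\mc_1&\mc_2\end{bmatrix}$ of full column rank. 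Applying this with $\mc_1$ the two overlapping columns and $\mc_2$ the remaining $k-2$ columns, and using that those $k-2$ columns are (in the cheapest configuration) mutually orthonormal unit vectors, so $\|\mc_2^\dagger\|_F^2=k-2$, we get $\|\mc^\dagger\|_F^2\ge\|\mc_1^\dagger\|_F^2+(k-2)$.

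It then remains to bound $\|\mc_1^\dagger\|_F^2$ from below over the two admissible $2$-column shapes. For~(\ref{e_aux1}) the singular values are $2/\sqrt3$ and $\sqrt{2/3}$, so $\|\mc_1^\dagger\|_F^2=\tfrac34+\tfrac32=\tfrac94$; for~(\ref{e_aux2}) they are $\sqrt{5/3}$ and $\sqrt{1/3}$, giving $\|\mc_1^\dagger\|_F^2=\tfrac35+3=\tfrac{18}{5}>\tfrac94$. The minimum over the two cases is $\tfrac94$, so in all false-instance configurations $\|\mc^\dagger\|_F^2\ge \tfrac94+(k-2)=k+\tfrac14$, as required. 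I should also check that no cheaper configuration arises when more than two columns overlap: that follows because any additional overlap only replaces an orthonormal unit column (contributing $1$ to the squared Frobenius norm of the pseudo-inverse) by part of another non-orthogonal block, and the partitioned additivity inequality together with the $2$-column base estimate shows the total cannot drop below $k+\tfrac14$.

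The main obstacle is the partitioned pseudo-inverse step: one must be careful that $\mc$ has full column rank (true here since $\rank(\ma)\ge k$ is assumed and a false instance still admits full-rank $M$-subsets), and that the inequality $\|\mc^\dagger\|_F^2\ge\|\mc_1^\dagger\|_F^2+\|\mc_2^\dagger\|_F^2$ is applied in the correct direction — it comes from the fact that compressing a block by an orthogonal projector only decreases singular values, hence increases the pseudo-inverse's, as recorded in the (currently commented) Lemma~\ref{l_fi} and to be supplied cleanly from Section~\ref{s_partinv}. A secondary subtlety is making the combinatorial "it suffices to consider the two-column overlap" argument fully rigorous in the additive (rather than monotone) setting; I would phrase it as: decompose $\mc$ into a maximal collection of disjoint columns plus the overlapping remainder, bound each orthonormal singleton block by $1$ and the remainder using the $2$-column estimate via the same additivity inequality, and conclude.
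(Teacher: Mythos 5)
Your proposal is correct and follows essentially the same route as the paper: the same X3C reduction and the same two-column singular-value computations ($9/4$ versus $18/5$ for the squared Frobenius norm of the pseudo-inverse of the overlapping pair), with the key step being the partitioned pseudo-inverse inequality of Lemma~\ref{l_fi}, yielding the bound $\|\mc^{\dagger}\|_F^2\geq k+\tfrac{1}{4}$ against the optimal $\sqrt{k}$ of Lemma~\ref{l_pinv}. The only difference is organizational: the paper peels off one unit-norm column at a time by induction, whereas you apply Lemma~\ref{l_fi} once to the partition (overlapping pair, remaining $k-2$ columns) and bound the second block via Lemma~\ref{l_pinv} (which in fact disposes of your worry about additional overlaps, since that lower bound holds for any unit-norm, full-column-rank block); both are valid.
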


\begin{proof}
The setup is the same as in the proof of Theorem~\ref{t_volptas}.
Assume that X3C instance is false, and assume that any two columns of~$\mc$ overlap.

We present induction over the number of columns $k$ to show that $\|\mc^\dagger\|_F \ge \sqrt{k+\tfrac14}$. 
\begin{itemize}
\item For $k=2$, two distinct sets can overlap in one or in two elements. 
When the sets share a single element, the nonzero elements (\ref{e_aux1}) of the relevant two columns
$\hat{\mc}_2$ have singular values 
$\sigma_1(\hat{\mc}_2)=2/\sqrt{3}$ and $\sigma_2(\hat{\mc}_2)=\sqrt{3/2}$.
When the sets share two elements, the nonzero elements
(\ref{e_aux2}) of the relevant two columns $\hat{\mc}_2$
have singular values $\sigma_1(\hat{\mc}_2)=\sqrt{5/3}$ and
$\sigma_2(\hat{\mc}_2)=\sqrt{1/3}$.
Between the two, (\ref{e_aux1}) achieves the smallest of two-norm of the pseudoinverse
Hence, for matrices $\hat{\mc}_2$ with $k=2$ columns, (\ref{e_aux1}) implies that
\begin{equation*}
\|\hat{\mc}_2\|_F^2=\left(\tfrac{1}{\sigma_1(\hat{\mc}_2)}\right)^2+
\left(\tfrac{1}{\sigma_2(\hat{\mc}_2)}\right)^2
=\left(\tfrac{\sqrt{3}}{2}\right)^2+\left(\sqrt{\tfrac{3}{2}}\right)^2=
2+ \tfrac{1}{4}=k+\tfrac{1}{4}.
\end{equation*}

\item For matrices $\hat{\mc}_{k-1}$ with $k-1$ columns, assume that 
\begin{equation*}
\|\hat{\mc}_{k-1}^{\dagger}\|_F^2\geq k-1+\tfrac{1}{4}.
\end{equation*}

\item Let $\hat{\mc}_k\equiv
\begin{bmatrix} \hat{\mc}_{k-1} & \vc\end{bmatrix}$
be a matrix with $k$ columns, where $\|\vc\|_2=1$. From Lemma~\ref{l_fi} \changes{in Section~\ref{s_partinv}}
follows
\begin{equation*}
\|\hat{\mc}_k^{\dagger}\|_F^2\geq \|\hat{\mc}_{k-1}^{\dagger}\|_F^2+\|\vc^{\dagger}\|_2^2
\geq \left(k-1 +\tfrac{1}{4}\right)+1=k+\tfrac{1}{4}.
\end{equation*}$\quad$
\end{itemize}
\end{proof}

\subsection{No PTAS for Schatten p-norm minimization of the pseudo-inverse}\label{s_psnorm4}
Lemma~\ref{l_pinv} \changes{in Section~\ref{s_unitnorm4}}
implies that the pseudo-inverse of a matrix with $k$ unit-norm columns has a minimal Schatten $p$-norm of~$k^{1/p}$. 
We derive an inapproximability threshold of $k^{1/p}\sqrt{(1+\tfrac{1}{4k})}$ for $p > 2$.

\begin{theorem}\label{t_pi3}
It is NP hard to approximate the Schatten $p$-norm minimization of the pseudo-inverse within a factor of 
$\sqrt{1+\tfrac{1}{4k}}$ for $p>2$. Thus, unless P=NP, there is no PTAS for Schatten $p$-norm minimization of the pseudo-inverse.
\end{theorem}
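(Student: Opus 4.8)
The plan is to mirror the argument of Theorem~\ref{t_pi} almost verbatim, replacing the Frobenius norm with the Schatten $p$-norm and using the monotonicity of the $p$-th power sum in place of the additive Frobenius identity. As in the previous theorems, I would set up the reduction from X3C exactly as in the proof of Theorem~\ref{t_volptas}: construct $\ma$ as in~(\ref{e_X3C}) with unit-norm columns, set $m=3M$, $k=M$, and assume the X3C instance is false, so that among any $k$ selected columns at least one pair overlaps. Lemma~\ref{l_pinv} gives that the minimal Schatten $p$-norm of the pseudo-inverse over all submatrices is $k^{1/p}$ (attained exactly when the columns are orthonormal, i.e.\ when X3C is true). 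So it suffices to show that when X3C is false, every column submatrix $\mc\in\real^{(3M)\times M}$ of $\ma$ satisfies $\|\mc^\dagger\|_{(p)}^p \ge (k-1) + (3/2)^{p/2} + (3/4)^{p/2}$, since one checks $(k-1) + (3/2)^{p/2} + (3/4)^{p/2} \ge k^{1/p}\cdot(1+\tfrac{1}{4k})^{1/2}\cdot$(appropriate power)\dots{} — more precisely I would phrase the threshold directly in terms of the worst two-column block, exactly as Theorem~\ref{t_pi} does with the $k+\tfrac14$ quantity, and conclude the gap with~$k^{1/p}$.

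The argument itself is an induction on the number of columns. For the base case $k=2$, the two overlap patterns are~(\ref{e_aux1}) and~(\ref{e_aux2}); by the computations already recorded in the proofs of Theorems~\ref{t_volptas} and~\ref{t_pi}, these have singular value pairs $\{2/\sqrt 3,\sqrt{2/3}\}$ and $\{\sqrt{5/3},\sqrt{1/3}\}$ respectively. The pseudo-inverse singular values are the reciprocals, and since $x\mapsto x^{p/2}$ is increasing, the pattern~(\ref{e_aux1}) minimizes $\|\hat\mc_2^\dagger\|_{(p)}^p = (\sqrt{3}/2)^p + (\sqrt{3/2})^p = (3/4)^{p/2} + (3/2)^{p/2}$ (one should double-check~(\ref{e_aux1}) versus~(\ref{e_aux2}) for every $p>2$, but since the multiset of reciprocal singular values for~(\ref{e_aux1}) is dominated entrywise — after sorting — by that of~(\ref{e_aux2}), the comparison holds for all $p$). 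For the inductive step, write $\hat\mc_k = \begin{bmatrix}\hat\mc_{k-1} & \vc\end{bmatrix}$ with $\|\vc\|_2=1$. The key inequality I need is the Schatten-$p$ analogue of Lemma~\ref{l_fi}, namely
\begin{equation*}
\|\hat\mc_k^\dagger\|_{(p)}^p \ \ge\ \|\hat\mc_{k-1}^\dagger\|_{(p)}^p + \|\vc^\dagger\|_2^p \ =\ \|\hat\mc_{k-1}^\dagger\|_{(p)}^p + 1,
\end{equation*}
which follows from the partitioned pseudo-inverse structure $\hat\mc_k^\dagger = \begin{bmatrix}\mm_1^\dagger \\ \mm_2^\dagger\end{bmatrix}$ (Lemma~\ref{l_pi0}/Lemma~\ref{l_fi}'s setup) together with the fact that $\mm_1 = \mP\hat\mc_{k-1}$ is $\hat\mc_{k-1}$ compressed by an orthogonal projector, hence $\sigma_i(\mm_1)\le\sigma_i(\hat\mc_{k-1})$ and therefore $\sigma_i(\mm_1^\dagger)\ge\sigma_i(\hat\mc_{k-1}^\dagger)$, and likewise the $\mm_2$ block contributes at least $\|\vc^\dagger\|_2^p=1$. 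Combining with the induction hypothesis closes the step.

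Finally, as in the previous proofs, I would assemble the pieces: the worst single overlap forces $\|\mc^\dagger\|_{(p)}^p \ge (k-2) + (3/4)^{p/2} + (3/2)^{p/2}$, and since $(3/4)^{p/2}+(3/2)^{p/2} \ge 2 + \tfrac14$ for $p\ge 2$ (with equality approached as $p\to 2$), one gets $\|\mc^\dagger\|_{(p)}^p \ge k + \tfrac14$, hence $\|\mc^\dagger\|_{(p)} \ge (k+\tfrac14)^{1/p} = k^{1/p}(1+\tfrac{1}{4k})^{1/p} \ge k^{1/p}(1+\tfrac{1}{4k})^{1/2}\cdot$(no — here I should be careful: for $p>2$, $(1+\tfrac1{4k})^{1/p} < (1+\tfrac1{4k})^{1/2}$, so the claimed factor $\sqrt{1+\tfrac1{4k}}$ in the statement would be \emph{too large}). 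The main obstacle is exactly this: reconciling the exponent in the claimed threshold $\sqrt{1+\tfrac1{4k}}$ with the $p$-th-root scaling — I would either (i) show the stronger bound $\|\mc^\dagger\|_{(p)}^p \ge k^{p/2}\cdot$(something) using that the \emph{three} forced overlapping pairs (as noted in the commented remark after Theorem~\ref{t_pi}) each contribute, or more simply (ii) restate the threshold as $\big((k-1)+(3/4)^{p/2}+(3/2)^{p/2}\big)^{1/p}$ divided by $k^{1/p}$, which is the honest gap the induction produces, and verify it exceeds $1$ uniformly in $n$; the $\sqrt{1+\tfrac1{4k}}$ form is then a clean lower bound for this ratio only for the Frobenius case $p=2$, and for $p>2$ one states the gap factor as $\big(1+\tfrac{1}{4k}\big)^{1/p}$ or, better, bounds it below by the $p$-independent constant coming from the $k=2$ block. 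I expect resolving this exponent bookkeeping — and confirming~(\ref{e_aux1}) is the worst case for all $p>2$, not just $p=2$ — to be the only nonroutine part; everything else transcribes from Theorems~\ref{t_pi} and~\ref{t_ssv}.
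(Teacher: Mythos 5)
Your proposal does not establish the theorem as stated, and you in fact flag the obstruction yourself: a direct induction on the $p$-th power sum of the inverse singular values yields at best $\|\mc^\dagger\|_{(p)}\ge (k+\tfrac14)^{1/p}=k^{1/p}\bigl(1+\tfrac{1}{4k}\bigr)^{1/p}$, and for $p>2$ this is strictly weaker than the claimed gap factor $\sqrt{1+\tfrac{1}{4k}}$. Your proposed remedies (restate the threshold as $\bigl(1+\tfrac1{4k}\bigr)^{1/p}$, or as the raw ratio produced by the induction) amount to proving a different theorem. The idea you are missing is the paper's short route: do not redo the induction in the Schatten norm at all. For a false X3C instance, Theorem~\ref{t_pi} already gives $\|\mc^\dagger\|_F\ge\sqrt{k+\tfrac14}$; now apply the vector $p$-norm equivalence \cite[(5.4.21)]{HoJoI} to the $k$ singular values of $\mc^\dagger$, which for $p>2$ gives $\|\mc^\dagger\|_{(p)}\ge k^{\frac1p-\frac12}\|\mc^\dagger\|_F\ge k^{\frac1p-\frac12}\sqrt{k+\tfrac14}=k^{1/p}\sqrt{1+\tfrac{1}{4k}}$. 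Against the optimal value $k^{1/p}$ from Lemma~\ref{l_pinv}, this is exactly the stated factor; note it is genuinely stronger than the $(k+\tfrac14)^{1/p}$ bound your induction would produce, which is why the choice of route matters here.

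A secondary gap: your inductive step asserts $\|\hat\mc_k^\dagger\|_{(p)}^p\ge\|\hat\mc_{k-1}^\dagger\|_{(p)}^p+1$ ``from the partitioned pseudo-inverse structure,'' citing only the Frobenius machinery of Lemmas~\ref{l_pi0} and~\ref{l_fi}. For $p\ne 2$ the Schatten norm of the stacked matrix $\bigl[\begin{smallmatrix}\mm_1^\dagger\\ \mm_2^\dagger\end{smallmatrix}\bigr]$ is not an entrywise (block-additive) quantity, so this inequality does not follow from the Frobenius argument alone; one needs a pinching-type argument on $\mc^\dagger(\mc^\dagger)^T$ as in Lemma~\ref{l_pi1}, together with $\sigma_i(\mm_1)\le\sigma_i(\hat\mc_{k-1})$. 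The claim is true, but as written the step is unjustified — and even once repaired it only delivers the weaker threshold discussed above.
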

\begin{proof}
    The setup is the same as in the proof of Theorem~\ref{t_volptas}.
Assume that X3C instance is false, and assume that any two columns of~$\mc$ overlap. As in the proof of Theorem~\ref{t_pi} conclude that $\|\mc^\dagger\|_F \ge \sqrt{k + \tfrac{1}{4}}=k^{1/2}\sqrt{1+\tfrac{1}{4k}}$. For $p > 2 $, 
we interpret the Schatten $p$-norms as vector $p$-norms on the singular values, and apply the  relation~\cite[(5.4.21)]{HoJoI} between vector $p$-norms,
\[ \|\mc^\dagger\|_{(p)} \ge k^{(\tfrac{1}{p}-\tfrac{1}{2})} \|\mc^\dagger\|_F \ge k^{1/p} \sqrt{1 + \tfrac{1}{4k}}.\]
\end{proof}

\section{Condition numbers}\label{s_condno}
We show that the minimization of two-norm, Frobenius norm, Schatten $p$-norm condition numbers, mixed 
and Schatten $p$-norm mixed  condition numbers for $p>2$ is NP hard
(Section~\ref{s_condno1}), derive a new approximability threshold
for 2-norm condition minimization (Section~\ref{s_condno2}),
and show that Frobenius norm condition number minimization
 (Section~\ref{s_condno3}) and mixed condition number
 minimization (Section~\ref{s_condno4}) do not admit a PTAS.

For $p\geq 1$, the $p$-norm condition number with regard to left
inversion of a matrix $\mc\in\real^{m\times k}$ with $\rank(\mc)=k$ is defined as
\begin{equation*}
\kappa_{(p)}(\mc)\equiv \|\mc\|_{(p)}\|\mc^{\dagger}\|_{(p)}.
\end{equation*}
Special cases include the two-norm condition number
\begin{equation*}
\kappa_2(\mc) = \|\mc\|_2 \|\mc^\dagger\|_2=
\sigma_1(\mc)/\sigma_k(\mc),
\end{equation*}
and the Frobenius norm condition number
$\kappa_F(\mc) \equiv \|\mc\|_F \|\mc^\dagger\|_F$.

The mixed condition number \cite[Section 3]{Dem87} is
$\kappa_D(\mc)\equiv\|\mc\|_F\|\mc^{\dagger}\|_2$ 
and its extension to Schatten $p$-norms for $p> 2$ is
\begin{equation*}
\kappa_{D,p}(\mc) \equiv \|\mc\|_{(p)} \|\mc^\dagger\|_2.
\end{equation*}

We want to find $k$ columns~$\mc$ of $\ma\in\rmn$ with minimal
condition number, $\min_{\mc}{\kappa_{\xi}(\mc)}$
for $\xi\in\{2, F, (p), D, (D,p)\}$.

\subsection{Decision problems}\label{s_condno1}
Given a matrix $\ma\in\rmn$ with  unit-norm columns $\|\ma\ve_j\|_2=1$, $1\leq j\leq n$;
\changes{integer $1\leq k\leq \min\{\rank(\ma),m/3\}$};
and parameter $b>0$.
\begin{enumerate}
\item \emph{Two-norm condition number minimization:}\\
Does $\ma$ have a submatrix $\mc\in\real^{m\times k}$ with 
$\kappa_2(\mc)\leq b$?
\item \emph{Frobenius-norm condition number  minimization:} \\
Does $\ma$ have a submatrix $\mc\in\real^{m\times k}$ with 
$\kappa_F(\mc)\leq b$?
\item \emph{Schatten $p$-norm condition number  minimization: }\\
Does $\ma$ have a submatrix $\mc\in\real^{m\times k}$ with 
$\kappa_{(p)}(\mc)\leq b$? 
\item \emph{Mixed condition number  minimization:} \\
Does $\ma$ have a submatrix $\mc\in\real^{m\times k}$ with 
$\kappa_D(\mc)\leq b$?
\item \emph{Schatten $p$-norm mixed condition number  minimization} \\
Does $\ma$ have a submatrix $\mc\in\real^{m\times k}$ with 
$\kappa_{D,p}(\mc)\leq b$? 
\end{enumerate}

The NP hardness of two-norm condition number minimization was already established in~\cite[Theorem 4]{CM09}.

\begin{theorem}\label{t_cond}
    The decision problems of condition number minimization are NP hard: in the two-norm, Frobenius norm, Schatten $p$-norms for $p>2$, mixed norm, 
    and Schatten $p$-mixed norms for $p>2$.
\end{theorem}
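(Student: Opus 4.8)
The plan is to run exactly the reduction used in Theorems~\ref{t_voldec}, \ref{t_norm} and~\ref{t_pi7}: given an instance of X3C with ground set $\{1,\dots,3M\}$ and collection $\mathcal K=\{\mathcal K_1,\dots,\mathcal K_n\}$ of distinct $3$-element sets, build in polynomial time the matrix $\ma\in\real^{(3M)\times n}$ of~(\ref{e_X3C}), whose columns each have three nonzero entries equal to $1/\sqrt3$ and hence unit two-norm, and set $m=3M$, $k=M$. By~\cite[Proof of Theorem 4]{CM09}, the X3C instance is true if and only if $\ma$ has a column submatrix $\mc\in\real^{(3M)\times M}$ with orthonormal columns. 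So the whole argument rests on one structural statement, which plays the role that Lemmas~\ref{l_vol}, \ref{l_norm}, \ref{l_pinv} play for the earlier criteria: \emph{for a matrix $\mc$ with unit-norm columns and $\rank(\mc)=k$, each of the condition numbers $\kappa_2,\kappa_F,\kappa_{(p)},\kappa_D,\kappa_{D,p}$ (with $p>2$) is bounded below by an explicit constant depending only on $k$, and this bound is attained if and only if the columns of $\mc$ are orthonormal.} Granting this (call it Lemma~\ref{l_cond}), take the decision threshold $b$ to be the corresponding constant; then the X3C instance is true iff some submatrix $\mc$ satisfies $\kappa_\xi(\mc)\le b$, which is the required polynomial-time reduction and proves NP hardness for all five criteria at once.

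It remains to sketch Lemma~\ref{l_cond}. Write $\sigma_1\ge\cdots\ge\sigma_k>0$ for the singular values of $\mc$; unit-norm columns force $\|\mc\|_F^2=\sum_{i=1}^k\sigma_i^2=k$. Two-norm: $\kappa_2(\mc)=\sigma_1/\sigma_k\ge1$, and equality forces all $\sigma_i$ equal, hence all $=1$ by the constraint. Frobenius and Schatten $p$-norm: Cauchy--Schwarz applied to $(\sigma_i^{p/2})$ and $(\sigma_i^{-p/2})$ gives $(\sum_i\sigma_i^p)(\sum_i\sigma_i^{-p})\ge k^2$, so $\kappa_{(p)}(\mc)\ge k^{2/p}$ (and $\kappa_F(\mc)\ge k$ for $p=2$), with equality iff all $\sigma_i$ are equal, hence $=1$. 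Mixed norm: $\sigma_k\le1$ since $k$ numbers whose squares sum to $k$ cannot all exceed $1$, so $\kappa_D(\mc)=\sqrt k/\sigma_k\ge\sqrt k$, equality iff $\sigma_k=1$, i.e. orthonormal columns. Mixed Schatten $p$-norm, $p>2$: by the vector norm inequality \cite[(5.4.21)]{HoJoI}, $\|\mc\|_{(p)}\ge k^{1/p-1/2}\|\mc\|_F=k^{1/p}$; combined with $\sigma_k\le1$ this yields $\kappa_{D,p}(\mc)=\|\mc\|_{(p)}/\sigma_k\ge k^{1/p}$, and equality forces equality in \cite[(5.4.21)]{HoJoI}, hence all $\sigma_i$ equal, hence orthonormal. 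In every case the optimal value is attained by orthonormal columns, so Lemma~\ref{l_cond} holds.

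The only genuinely delicate case is the mixed Schatten $p$-norm: unlike the symmetric condition numbers, $\kappa_{D,p}$ mixes two different norms, so its minimum is not characterized by a single Cauchy--Schwarz step, and the role of the hypothesis $p>2$ must be pinned down --- without it, the bound $\|\mc\|_{(p)}\ge k^{1/p}$ fails and the infimum is no longer attained at orthonormal columns. A minor point is that the thresholds $k^{2/p}$, $\sqrt k$ and $k^{1/p}$ need not be rational; since $k$ is part of the polynomially constructed instance, one reformulates the decision question in terms of the appropriate integer power of $\kappa_\xi(\mc)$ --- e.g. $\kappa_D(\mc)^2\le k$, $\kappa_{(p)}(\mc)^p\le k^2$, $\kappa_{D,p}(\mc)^p\le k$ --- keeping the input rational and the problem well posed in the Turing model. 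Everything else (constructing $\ma$, computing $b$) is plainly polynomial, so the reduction goes through.
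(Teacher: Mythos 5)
Your proposal is correct and follows essentially the same route as the paper: the identical X3C reduction via the matrix in~(\ref{e_X3C}), with the argument reduced to the statement of Lemma~\ref{l_cond} that each condition number $\kappa_2,\kappa_F,\kappa_{(p)},\kappa_D,\kappa_{D,p}$ of a unit-column matrix is bounded below by an explicit constant attained exactly at orthonormal columns, which you prove by the same Cauchy--Schwarz and norm-equivalence arguments the paper uses. The only additions (the remark on the role of $p>2$ and the rational reformulation of the thresholds for the Turing model) are harmless refinements, not a different approach.
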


\begin{proof}
The proof of NP hardness is the same as that of Theorem~\ref{t_voldec}, but uses Lemma~\ref{l_cond} instead. 
\end{proof}

\subsection{No PTAS for two-norm condition number minimization}\label{s_condno2}
Lemma~\ref{l_cond} \changes{in Section~\ref{s_unitnorm5}} implies that the minimal two-norm condition number is equal to~1. 
We derive an inapproximability threshold of $\sqrt{2}$.

\begin{theorem}\label{t_2cond}
It is NP hard to approximate two-norm condition number minimization within a factor of 
$\sqrt{2}$. Thus, unless P=NP, there is no PTAS for two-norm condition number minimization.
\end{theorem}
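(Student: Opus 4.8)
The plan is to reuse the X3C reduction from Theorem~\ref{thm:rvolnp} and Theorem~\ref{t_vol}, with the matrix $\ma$ constructed as in~(\ref{e_X3C}): columns have exactly three nonzero entries equal to $1/\sqrt{3}$, and a submatrix $\mc\in\real^{(3M)\times M}$ has orthonormal columns (hence $\kappa_2(\mc)=1$) if and only if the X3C instance is true. By Lemma~\ref{l_cond} the minimal two-norm condition number equals~1, so it suffices to show that for a \emph{false} instance, every $M$-column submatrix $\mc$ has $\kappa_2(\mc)\geq\sqrt{2}$, thereby opening a multiplicative gap of $\sqrt{2}$ between the true and false cases.

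First I would argue, exactly as in Theorem~\ref{t_volptas} and Theorem~\ref{t_vol}, that when the instance is false, any collection of $M$ sets contains two distinct sets $\mathcal{K}_i,\mathcal{K}_j$ with nonempty intersection, and since the sets are distinct the overlap is one or two elements, giving a two-column submatrix $\hat{\mc}_2$ whose nonzero pattern is~(\ref{e_aux1}) or~(\ref{e_aux2}). From the computations already recorded in those proofs, the single-element overlap~(\ref{e_aux1}) has singular values $2/\sqrt{3}$ and $\sqrt{2/3}$, so $\kappa_2(\hat{\mc}_2)=\frac{2/\sqrt{3}}{\sqrt{2/3}}=\sqrt{2}$; the two-element overlap~(\ref{e_aux2}) has singular values $\sqrt{5/3}$ and $\sqrt{1/3}$, so $\kappa_2(\hat{\mc}_2)=\sqrt{5}>\sqrt{2}$. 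Hence in either base case $\kappa_2(\hat{\mc}_2)\geq\sqrt{2}$.

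The step I expect to be the main obstacle is passing from the two-column submatrix to the full $M$-column submatrix $\mc$: unlike volume or smallest singular value, the condition number is not monotone under adding columns, so I cannot simply invoke interlacing. The resolution is to bound the two ends separately. Writing $\mc=\begin{bmatrix}\hat{\mc}_2 & \mr\end{bmatrix}$ up to a column permutation, singular value interlacing \cite[Corollary 8.6.3]{GovL13} gives $\sigma_1(\mc)\geq\sigma_1(\hat{\mc}_2)=2/\sqrt{3}$ and $\sigma_M(\mc)\leq\sigma_2(\hat{\mc}_2)=\sqrt{2/3}$ for the single-element overlap (and the analogous inequalities $\sigma_1(\mc)\geq\sqrt{5/3}$, $\sigma_M(\mc)\leq\sqrt{1/3}$ for the two-element case). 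Therefore
\begin{equation*}
\kappa_2(\mc)=\frac{\sigma_1(\mc)}{\sigma_M(\mc)}\geq\frac{\sigma_1(\hat{\mc}_2)}{\sigma_2(\hat{\mc}_2)}=\kappa_2(\hat{\mc}_2)\geq\sqrt{2},
\end{equation*}
with the last inequality holding in both overlap cases. One subtlety to record: the two interlacing bounds hold simultaneously because the same two overlapping columns sit inside $\mc$; it does not matter that the largest singular value of $\mc$ and its smallest singular value may come from different column pairs, since we only need the one-sided bounds $\sigma_1(\mc)\geq\sigma_1(\hat{\mc}_2)$ and $\sigma_M(\mc)\leq\sigma_2(\hat{\mc}_2)$.

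Finally I would close as in Theorem~\ref{t_volptas}: if a polynomial-time algorithm could approximate two-norm condition number minimization within a factor smaller than $\sqrt{2}$, then applied to $\ma$ it would return $\mc$ with $\kappa_2(\mc)<\sqrt{2}$ precisely for true X3C instances and $\kappa_2(\mc)\geq\sqrt{2}$ for false ones, so it would decide X3C in polynomial time, forcing $P=NP$. In particular no PTAS exists for two-norm condition number minimization unless $P=NP$. I would also remark, as the paper does elsewhere, that this threshold $\sqrt{2}$ differs from the one in \cite[Theorem 8]{CM09}, the discrepancy again tracing to how \cite{CM09} estimates the extreme singular values from the volume rather than directly from the overlapping columns.
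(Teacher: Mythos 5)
Your proposal is correct and follows essentially the same route as the paper: reduce from X3C via the matrix in (\ref{e_X3C}), use Lemma~\ref{l_cond} for the optimal value~1, and lower-bound $\kappa_2(\mc)\geq\sqrt{2}$ for false instances from the two overlap patterns (\ref{e_aux1}) and (\ref{e_aux2}). The only difference is cosmetic: your inline interlacing step ($\sigma_1(\mc)\geq\sigma_1(\hat{\mc}_2)$, $\sigma_M(\mc)\leq\sigma_2(\hat{\mc}_2)$) is exactly the two-norm case of Lemma~\ref{l_inter2}, which the paper cites instead — so the condition number \emph{is} in fact monotone under adding columns, contrary to your worry, and your own argument proves it.
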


\begin{proof}
The setup is the same as in the proof of Theorem~\ref{t_volptas}.
We derive a lower bound for the condition numbers for the
reduced instance of X3C when it is false. Lemma~\ref{l_inter2}
\changes{in Section~\ref{s_unitnorm5}}
implies that the condition numbers decrease with the removal
of columns. Hence the smallest condition numbers are found
by identifying two columns for a false instance of X3C.

When the sets share a single element, the
nonzero elements (\ref{e_aux1}) of the relevant two columns 
$\hat{\mc}$ have
singular values $2/\sqrt{3}$ and $\sqrt{2/3}$.
When the sets share two elements, the
nonzero elements (\ref{e_aux2}) of the relevant two columns 
$\hat{\mc}$ have
singular values $\sqrt{5/3}$ and $1/\sqrt{3}$.
Between the two, (\ref{e_aux1}) achieves the smallest
condition number $(2/\sqrt{3})/(\sqrt{2/3})=\sqrt{2}$,
which implies $\kappa_2(\mc)\geq \sqrt{2}$.
\end{proof}

Theorem~\ref{t_2cond} derives a PTAS inapproximability threshold of $\sqrt{2}$
which differs from the one in \cite[Theorem 8]{CM09},
which is $(2^{2k-3}/3^{k-2})^{\frac{1}{2(k-1)}}$.

\subsection{No PTAS for Frobenius norm condition number minimization}\label{s_condno3}
Lemma~\ref{l_cond} \changes{in Section~\ref{s_unitnorm5}}
implies that a matrix with $k$ unit-norm columns
has  a minimal Frobenius norm condition
number of~$k$. 
We derive an inapproximability threshold of $\sqrt{k\left(k+\tfrac{1}{4}\right)}$.

\begin{theorem}\label{t_fcond}
It is NP hard to approximate Frobenius norm condition number minimization within a factor of 
$\sqrt{1+\tfrac{1}{4k}}$. Thus, unless P=NP, there is no PTAS for Frobenius norm condition number minimization.
\end{theorem}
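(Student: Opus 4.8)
The plan is to reuse the X3C reduction already in place and the pseudo-inverse estimate of Theorem~\ref{t_pi}, exploiting the fact that on our reduced instance the Frobenius norm of the submatrix itself is \emph{constant}. Starting from the construction in the proof of Theorem~\ref{t_volptas} — the matrix $\ma$ of~(\ref{e_X3C}) with $m=3M$, $k=M$, and every column of unit two-norm — the first observation is that every column submatrix $\mc\in\real^{m\times k}$ of $\ma$ satisfies $\|\mc\|_F^2=\sum_{j=1}^k\|\mc\ve_j\|_2^2=k$, so $\|\mc\|_F=\sqrt{k}$ independently of which $k$ columns are chosen. Hence $\kappa_F(\mc)=\sqrt{k}\,\|\mc^\dagger\|_F$, and on this instance minimizing $\kappa_F$ over column submatrices is equivalent to minimizing $\|\mc^\dagger\|_F$, which was handled in Theorem~\ref{t_pi}.

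From here I would assemble the two ends of the gap. By Lemma~\ref{l_cond}, a matrix with $k$ unit-norm columns has minimal Frobenius condition number $k$, and when the X3C instance is true the orthonormal-column submatrix $\mc_*$ of~\cite[Proof of Theorem 4]{CM09} has $\|\mc_*\|_F=\|\mc_*^\dagger\|_F=\sqrt{k}$, hence $\kappa_F(\mc_*)=k$, so the optimum is attained. When the X3C instance is false, I would run the induction from the proof of Theorem~\ref{t_pi} unchanged: its base case gives $\|\hat{\mc}_2^{\dagger}\|_F^2=2+\tfrac14=k+\tfrac14$ for the single-overlap configuration~(\ref{e_aux1}) (the worst of the two possible two-column patterns), and its inductive step appends a unit-norm column $\vc$ and applies Lemma~\ref{l_fi}, $\|\hat{\mc}_k^{\dagger}\|_F^2\ge\|\hat{\mc}_{k-1}^{\dagger}\|_F^2+\|\vc^{\dagger}\|_2^2\ge(k-1+\tfrac14)+1$, so that $\|\mc^{\dagger}\|_F\ge\sqrt{k+\tfrac14}$ for every $k$-column submatrix. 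Multiplying the two facts yields $\kappa_F(\mc)=\sqrt{k}\,\|\mc^{\dagger}\|_F\ge\sqrt{k(k+\tfrac14)}$ for every submatrix in a false instance.

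The gap argument then closes exactly as in Theorem~\ref{t_2cond}: the ratio between the false-instance lower bound $\sqrt{k(k+\tfrac14)}$ and the true-instance optimum $k$ equals $\sqrt{1+\tfrac{1}{4k}}$, so any polynomial-time algorithm that approximated $\min_{\mc}\kappa_F(\mc)$ within a factor smaller than $\sqrt{1+\tfrac1{4k}}$ would separate true from false instances of X3C and force $P=NP$; in particular no PTAS exists.

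The only step requiring genuine care — and it is inherited wholesale from Theorems~\ref{t_volptas} and~\ref{t_pi} — is the reduction of an arbitrary false instance to the two worst-case two-column configurations: one must note that any $M$ chosen sets contain an overlapping pair, that up to row permutations such a pair looks like~(\ref{e_aux1}) or~(\ref{e_aux2}), and that among them~(\ref{e_aux1}) (a single shared element) is the one minimizing $\|\mc^{\dagger}\|_F$ and hence $\kappa_F$; the monotonicity of $\|\cdot^{\dagger}\|_F$ under appending columns (Lemma~\ref{l_fi}) then propagates the base-case bound to all $k$. No new estimate beyond those of Theorem~\ref{t_pi} is needed, which is why I do not expect any substantive obstacle.
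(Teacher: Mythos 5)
Your proposal is correct and follows essentially the same route as the paper: it combines the false-instance lower bound $\|\mc^{\dagger}\|_F\geq\sqrt{k+\tfrac14}$ from Theorem~\ref{t_pi} with $\|\mc\|_F=\sqrt{k}$ from~(\ref{e_srk}) to get $\kappa_F(\mc)\geq\sqrt{k(k+\tfrac14)}=k\sqrt{1+\tfrac{1}{4k}}$, and compares this with the optimal value $k$ from Lemma~\ref{l_cond}. The extra details you spell out (the true-instance optimum and the gap argument) are exactly what the paper leaves implicit.
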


\begin{proof}
From Theorem~\ref{t_pi} follows 
$\|\mc^{\dagger}\|_F\geq \sqrt{k+\tfrac{1}{4}}$.
Regardless of overlaps, the columns of $\mc$ have unit norm, so (\ref{e_srk})
\changes{in Section~\ref{s_unitnorm1}} implies
$\|\mc\|_F=\sqrt{k}$. Thus 
\begin{equation*}
\kappa_F(\mc)\geq \sqrt{k(k+\tfrac{1}{4})}=k\sqrt{1+\tfrac{1}{4k}}.
\end{equation*}
\end{proof}

\subsection{No PTAS for mixed condition number minimization}\label{s_condno4}
Lemma~\ref{l_cond} \changes{in Section~\ref{s_unitnorm5}}
implies that a matrix with $k$ unit-norm columns has a mixed condition
number with a minimal value of $\sqrt{k}$.
We  derive an approximability threshold of~$\sqrt{\tfrac{3}{2}k}$.

\begin{theorem}\label{t_cd}
It is NP hard to approximate mixed condition number minimization within a factor of 
$\sqrt{3/2}$. Thus, unless P=NP, there is no PTAS for mixed condition number minimization.
\end{theorem}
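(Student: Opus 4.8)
The plan is to reuse, essentially verbatim, the polynomial-time reduction from \textit{Exact Cover by 3-Sets} constructed in the proof of Theorem~\ref{t_volptas}: take $m=3M$, $k=M$, and build $\ma\in\real^{(3M)\times n}$ as in~(\ref{e_X3C}), so that every column of $\ma$ has exactly three nonzero entries equal to $1/\sqrt{3}$ and hence unit two-norm. Lemma~\ref{l_cond} already tells us that the minimal mixed condition number over all $M$-column submatrices with unit-norm columns is $\sqrt{k}$, attained exactly when the chosen columns are orthonormal, which (by the X3C analysis) happens iff the instance is true. So all that remains is to produce the gap: show that when the instance is \emph{false}, every column submatrix $\mc\in\real^{(3M)\times M}$ of $\ma$ satisfies $\kappa_D(\mc)\geq\sqrt{\tfrac32 k}$.

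The key observation is that $\kappa_D$ factors cleanly and each factor is controlled by a result already in hand. For the first factor, since each column of $\ma$ has unit two-norm, equation~(\ref{e_srk}) gives $\|\mc\|_F=\sqrt{k}$ for \emph{every} $M$-column submatrix, irrespective of overlaps. For the second factor, a false X3C instance forces any collection of $k=M$ of the sets to contain two distinct sets with non-empty intersection; the induction in the proof of Theorem~\ref{t_ssv} shows precisely that this implies $\|\mc^{\dagger}\|_2=1/\sigma_k(\mc)\geq\sqrt{3/2}$ (the worst, i.e.\ largest, smallest singular value among the two admissible overlap patterns (\ref{e_aux1}) and (\ref{e_aux2}) is $\sqrt{2/3}$, attained by (\ref{e_aux1}), and adding further columns only decreases the smallest singular value by interlacing). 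Multiplying the two bounds,
\begin{equation*}
\kappa_D(\mc)=\|\mc\|_F\,\|\mc^{\dagger}\|_2\geq\sqrt{k}\cdot\sqrt{\tfrac32}=\sqrt{\tfrac32\,k},
\end{equation*}
which is a factor $\sqrt{3/2}$ above the optimal value $\sqrt{k}$.

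I would then close with the standard gap argument exactly as in Theorem~\ref{t_volptas}: an algorithm returning a submatrix $\mc$ with $\kappa_D(\mc)<\sqrt{\tfrac32\,k}$ would distinguish true from false instances of X3C in polynomial time, forcing $P=NP$; hence there is no PTAS for mixed condition number minimization. I do not expect a genuine obstacle here — the only point requiring care is that $\|\mc\|_F=\sqrt{k}$ holds for \emph{every} submatrix, so the two factor bounds can be multiplied directly and no separate monotonicity-under-column-removal lemma for $\kappa_D$ is needed; the entire overlap analysis is inherited from Theorem~\ref{t_ssv}.
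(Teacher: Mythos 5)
Your proposal is correct and follows essentially the same route as the paper: both invoke the X3C reduction from Theorem~\ref{t_volptas}, bound $\|\mc^{\dagger}\|_2\geq\sqrt{3/2}$ for false instances via Theorem~\ref{t_ssv}, use~(\ref{e_srk}) to get $\|\mc\|_F^2=k$, and multiply to obtain $\kappa_D(\mc)\geq\sqrt{\tfrac{3}{2}k}$ against the optimal value $\sqrt{k}$ from Lemma~\ref{l_cond}. No further comment is needed.
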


\begin{proof}
The setup is the same as in the proof of Theorem~\ref{t_volptas}.
Assume that X3C instance is false, and that any two columns of~$\mc$ overlap by at least one element.
Theorem~\ref{t_ssv} implies
\begin{equation*}
\kappa_D(\mc)^2=\|\mc\|_F^2\|\mc^{\dagger}\|_2^2\geq
\frac{3}{2}\,\|\mc\|_F^2.
\end{equation*}
Regardless of overlaps, the columns of $\mc$ have unit norm, so (\ref{e_srk})
\changes{in Section~\ref{s_unitnorm1}} implies
$\|\mc\|_F^2=k$. Thus $\kappa_D(\mc)^2\geq \frac{3}{2}k$.
\end{proof}

\section{Stable rank}\label{s_srank}
We show that maximization of the stable rank and the $p$-stable rank for $p>2$ is NP hard 
(Section~\ref{s_srank1}),
and show that stable rank maximization has no PTAS (Section~\ref{s_srank2}).

The stable rank of a nonzero matrix $\mc \in \real^{m\times k}$ is defined as 
\[ \sr(\mc) \equiv {\|\mc\|_F^2}/{\|\mc\|_2^2},\]
and $\sr(\vzero_{m\times k})=0$.
The extension to Schatten $p-$norms for $p> 2$
\cite{IpsS24} is
\[ \sr_{(p)}(\mc) \equiv {\|\mc\|_{(p)}^p}/{\|\mc\|_2^p},\] 
and  $\sr_{(p)}(\vzero_{m\times k})=0$.
Obviously, $\sr_{(2)}(\ma) = \sr(\ma)$.

We want to find $k$ columns $\mc$ of $\ma\in\rmn$ that
maximize the $p$-stable rank $\max_{\mc}{\sr_{(p)}(\mc)}$ for $p\geq 2$.

\subsection{Decision problems}\label{s_srank1}
Given a matrix $\ma\in\rmn$ with unit-norm columns $\|\ma\ve_j\|_2=1$, $1\leq j\leq n$;
\changes{$1\leq k\leq \min\{\rank(\ma), m/3\}$;}
and parameter $b>0$.
\begin{enumerate}
\item \emph{Stable rank maximization}:\\
Does $\ma$ have a submatrix $\mc\in\real^{m\times k}$ with $\sr(\mc)\geq b$?
\item \emph{$p$-stable rank maximization}:\\
Does $\ma$ have a submatrix $\mc\in\real^{m\times k}$ with $\sr_p(\mc)\geq b$?
\end{enumerate}

\begin{theorem}\label{t_other}
The  decision problems of stable rank maximization and $p$-stable rank maximization for $p>2$ are NP hard.
\end{theorem}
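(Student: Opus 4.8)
The plan is to follow exactly the template established in the proof of Theorem~\ref{t_voldec}, namely a polynomial-time reduction from X3C. Given an instance $(\mathcal{S},\mathcal{K})$ of X3C, build the matrix $\ma\in\real^{(3M)\times n}$ of~(\ref{e_X3C}), with unit-norm columns, and set $m=3M$, $k=M$. The only thing that changes from criterion to criterion is the equivalence ``$\ma$ has a submatrix $\mc\in\real^{(3M)\times M}$ with orthonormal columns'' $\iff$ ``the optimal criterion value is attained at its extreme.'' For stable rank, orthonormal columns give $\|\mc\|_F^2=k$ and $\|\mc\|_2^2=1$, hence $\sr(\mc)=k$; similarly $\|\mc\|_{(p)}^p=k$ and $\|\mc\|_2^p=1$, so $\sr_{(p)}(\mc)=k$. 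The content of the reduction is therefore a lemma (presumably Lemma~\ref{l_sr}, analogous to Lemma~\ref{l_vol}, Lemma~\ref{l_norm}, Lemma~\ref{l_pinv}, Lemma~\ref{l_cond} invoked in the earlier NP-hardness proofs) stating that for a matrix $\mc$ with $k$ unit-norm columns, $\sr(\mc)\le k$ and $\sr_{(p)}(\mc)\le k$ for $p>2$, with equality if and only if the columns of $\mc$ are orthonormal.

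First I would record that lemma. The upper bound $\sr(\mc)=\|\mc\|_F^2/\|\mc\|_2^2\le k$ holds because $\|\mc\|_F^2=\sum_{j=1}^k\sigma_j(\mc)^2\le k\,\sigma_1(\mc)^2=k\|\mc\|_2^2$ for any $\mc\in\real^{m\times k}$; when the columns are unit-norm, $\|\mc\|_F^2=k$ by~(\ref{e_srk}), so $\sr(\mc)=k/\|\mc\|_2^2$, which is $k$ exactly when $\|\mc\|_2=1$, i.e.\ when all singular values equal~$1$, i.e.\ when $\mc$ has orthonormal columns. The $p$-norm version is identical: with unit-norm columns, $\|\mc\|_{(p)}^p=\sum_{j=1}^k\sigma_j(\mc)^p$ and one compares with $k\,\sigma_1(\mc)^p$; since $p>2$ and $\sum\sigma_j^2=k$, by power-mean monotonicity $\|\mc\|_{(p)}^p\le k$ with equality iff all $\sigma_j=1$, whereas $\|\mc\|_2^p=\sigma_1^p\ge 1$, so $\sr_{(p)}(\mc)=\|\mc\|_{(p)}^p/\|\mc\|_2^p\le k$ with equality iff $\mc$ has orthonormal columns. (Alternatively one may simply note $\sigma_j(\mc)^p\le\sigma_1(\mc)^p$ termwise.)

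Next I would assemble the reduction. Construct $\ma$ as above; this is polynomial in the size of the X3C instance. By~\cite[Proof of Theorem 4]{CM09}, the X3C instance is a ``yes'' instance if and only if $\ma$ has a submatrix $\mc\in\real^{(3M)\times M}$ with orthonormal columns. By the lemma just proved, this is equivalent to $\sr(\mc)=M=k$ (respectively $\sr_{(p)}(\mc)=k$), and since $k$ is the maximal possible value, it is equivalent to the decision problem ``does $\ma$ have a submatrix $\mc$ with $\sr(\mc)\ge k$?'' (respectively $\sr_p(\mc)\ge k$) answering ``yes.'' Hence X3C reduces in polynomial time to stable rank maximization and to $p$-stable rank maximization for $p>2$, so both decision problems are NP hard. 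As the excerpt does for the earlier theorems, I would state this as: ``The proof is the same as that of Theorem~\ref{t_voldec}, but uses Lemma~\ref{l_sr} instead.''

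I do not anticipate a genuine obstacle here; the construction is a verbatim reuse of the X3C gadget, and the only mild subtlety is the ``only if'' direction of the lemma, which is where one must use that unit columns plus $\sigma_1(\mc)=1$ forces all singular values to equal~$1$ (so that no slack is possible)---exactly the same point that appears in the omitted Lemma~\ref{lem:orth} for relative volume. For the $p$-stable rank one should also remember the standing restriction to integer $p>2$; the case $p=2$ is covered separately and $p<2$ would reverse the inequality, as Remark~\ref{r_schattenp} notes for norms. Nothing else is needed.
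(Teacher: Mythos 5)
Your proposal is correct and takes essentially the same route as the paper: a polynomial-time X3C reduction via the matrix (\ref{e_X3C}), combined with the optimal-value lemma (the paper's Lemma~\ref{l_srank}) asserting $\sr(\mc)\le k$ and $\sr_{(p)}(\mc)\le k$ for unit-norm columns, with equality if and only if the columns are orthonormal. One small caveat: your intermediate claim that power-mean monotonicity yields $\|\mc\|_{(p)}^p\le k$ is backwards (with $\sum_{j}\sigma_j(\mc)^2=k$ and $p>2$ one gets $\sum_j\sigma_j(\mc)^p\ge k$ in general), but your parenthetical termwise bound $\sigma_j(\mc)^p\le\sigma_1(\mc)^p$ is exactly the paper's argument and suffices for both the inequality and its equality case.
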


\begin{proof}
The proof is the same as that of Theorem~\ref{t_voldec} but uses Lemma~\ref{l_srank} 
\changes{in Section~\ref{s_unitnorm6}} instead. 
 \end{proof}

\subsection{No PTAS for stable rank maximization}\label{s_srank2}
Lemma~\ref{l_srank} \changes{in Section~\ref{s_unitnorm6}}
implies that, for a matrix with $k$ unit-norm columns, the 
maximal value of the stable rank equals $k$.
We derive an inapproximability threshold of~$\tfrac{3}{4}k$.

\begin{theorem}\label{t_sr}
It is NP hard to approximate stable rank 
maximization within a factor of $3/4$. Thus, unless P=NP, there is no PTAS for stable rank maximization.
\end{theorem}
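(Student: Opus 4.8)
The plan is to follow the same template used throughout Section~\ref{s_srank} and the earlier PTAS-inapproximability proofs, reducing from X3C via the matrix $\ma$ in~(\ref{e_X3C}) with $m=3M$, $k=M$. For a \emph{true} instance, there is a submatrix $\mc_*$ with orthonormal columns, so $\|\mc_*\|_F^2=k$ and $\|\mc_*\|_2^2=1$, giving $\sr(\mc_*)=k$ (the maximal value by Lemma~\ref{l_srank}). The task is to show that for a \emph{false} instance, every $M$-column submatrix $\mc$ has $\sr(\mc)\le \tfrac34 k$, which then separates the two cases and rules out a $(1-\epsilon)$-approximation, hence a PTAS.

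The key step is the bound for false instances. Since the columns of $\ma$ have unit norm, $\|\mc\|_F^2=k$ regardless of overlaps (cf.~(\ref{e_srk})), so $\sr(\mc)=k/\|\mc\|_2^2$, and it suffices to show $\|\mc\|_2^2\ge \tfrac43$. For a false X3C instance, any collection of $k=M$ sets contains two distinct sets $\mathcal K_i,\mathcal K_j$ with nonempty intersection; the corresponding two-column submatrix $\hat{\mc}_2$ has nonzero pattern~(\ref{e_aux1}) or~(\ref{e_aux2}), with largest singular value $2/\sqrt3$ or $\sqrt{5/3}$ respectively. Singular value interlacing (as in the proof of Theorem~\ref{t_2m}) gives $\|\mc\|_2\ge \|\hat{\mc}_2\|_2\ge 2/\sqrt3$, i.e. $\|\mc\|_2^2\ge 4/3$. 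Therefore $\sr(\mc)\le k/(4/3)=\tfrac34 k$ for every $M$-column submatrix of a false instance. In fact this is immediate from Theorem~\ref{t_2m}, whose proof already establishes $\|\mc\|_2\ge 2/\sqrt3$ for the false reduced instance, so the stable-rank bound follows by dividing $\|\mc\|_F^2=k$ by $\|\mc\|_2^2\ge 4/3$.

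The conclusion then mirrors the summary arguments of Theorems~\ref{t_volptas} and~\ref{t_2m}: if a polynomial-time algorithm could approximate stable rank maximization within a factor better than $3/4$, applied to the reduced matrix it would return $\mc$ with $\sr(\mc)>\tfrac34 k$ precisely when the X3C instance is true, thereby deciding X3C in polynomial time and forcing $P=NP$. Hence it is NP hard to approximate within a factor of $3/4$, and no PTAS exists unless $P=NP$.

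I do not expect a genuine obstacle here: the only mild subtlety is confirming that $\|\mc\|_2^2\ge 4/3$ holds for \emph{all} $M$-column submatrices of a false instance and not merely for the worst pair — but this is exactly what interlacing delivers, since adding columns cannot decrease the largest singular value, and every such submatrix must contain an overlapping pair. The constants $2\sqrt2/3$, $2/\sqrt3$, $\sqrt2$ appearing in the neighboring theorems are reassuring consistency checks: $\tfrac43$ as the squared two-norm floor is the same quantity, and $k/\tfrac43=\tfrac34k$ is the claimed threshold.
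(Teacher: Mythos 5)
Your argument is correct and coincides with the paper's proof: both invoke the X3C reduction, use $\|\mc\|_F^2=k$ from the unit-norm columns via~(\ref{e_srk}), and apply the lower bound $\|\mc\|_2\geq 2/\sqrt{3}$ from Theorem~\ref{t_2m} for false instances to conclude $\sr(\mc)\leq \tfrac{3}{4}k$. The extra interlacing discussion you add is exactly what underlies Theorem~\ref{t_2m} already, so nothing differs in substance.
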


\begin{proof}
The setup is the same as in the proof of Theorem~\ref{t_volptas}.
Assume that X3C instance is false, in which 
case Theorem~\ref{t_2m} implies 
$\|\mc\|_2\geq 2/\sqrt{3}$.
Hence
\begin{equation*}
\sr(\mc)=\frac{\|\mc\|_F^2}{\|\mc\|_2^2}\leq\frac{3}{4}\|\mc\|_F^2=
\frac{3}{4}k,
\end{equation*}
because,
regardless of overlaps, the columns of $\mc$ have unit norm, 
so (\ref{e_srk}) \changes{in Section~\ref{s_unitnorm1}} implies $\|\mc\|_F^2=k$. 
\end{proof}

\section{Optimal values for subset selection criteria}\label{s_unitnorm}
To express the optimization versions of the subset selection criteria as decision problems, we derive  optimal values for: maximal volume and S-optimality (Section~\ref{s_unitnorm1}),
maximal relative volume (Section~\ref{s_unitnorm2}), minimal two-norm and Schatten $p$-norms
(Section~\ref{s_unitnorm3}), minimal pseudo-inverse norms (Section~{\ref{s_unitnorm4}),
minimal condition numbers (Section~\ref{s_unitnorm5}), and maximal stable ranks
(Section~\ref{s_unitnorm6}).

\subsection{Maximal volume and S-optimality}\label{s_unitnorm1}
We show that for matrices with unit-norm columns, the
volume and S-optimality are bounded above by~1; and that
only matrices with orthonormal columns have
a maximal volume and S-optimality equal to~1.

\begin{lemma}\label{l_vol}
Let $\mc \in \mathbb{R}^{m \times k}$ with $\rank(\mc) = k$ and $\|\mc\ve_j\|_2=1$, $1\leq j\leq k$. Then each of the following inequalities holds with equality if and only if $\mc$ has orthonormal
columns. 
\begin{enumerate}
    \item Volume: $\vol(\mc)\leq 1$.
    \item S-optimality: $\sopt(\mc)\leq 1$.
\end{enumerate}
    \end{lemma}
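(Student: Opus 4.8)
The plan is to prove both inequalities simultaneously via the Hadamard-type inequality for Gram determinants, and then pin down the equality case. Since $\vol(\mc)^2 = \det(\mc^T\mc)$, where the Gram matrix $\mc^T\mc$ is symmetric positive definite with unit diagonal entries (because $(\mc^T\mc)_{jj} = \|\mc\ve_j\|_2^2 = 1$), the classical Hadamard inequality $\det(G) \le \prod_j G_{jj}$ for positive semidefinite $G$ gives $\vol(\mc)^2 \le 1$, hence $\vol(\mc)\le 1$. This immediately handles part~1, and since $\prod_{i=1}^k \|\mc\ve_i\|_2 = 1$ under the unit-norm assumption, $\sopt(\mc) = \vol(\mc)^{1/k} \le 1$, which handles part~2.

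For the equality analysis, the key is the known equality condition in Hadamard's inequality: for a positive definite matrix $G$, $\det(G) = \prod_j G_{jj}$ holds if and only if $G$ is diagonal. Applied here, $\vol(\mc) = 1$ iff $\mc^T\mc$ is diagonal, and since its diagonal entries are all~1, this means $\mc^T\mc = \mi_k$, i.e. $\mc$ has orthonormal columns. The same equivalence then transfers to $\sopt(\mc) = 1$. Conversely, if $\mc$ has orthonormal columns, then all $\sigma_j(\mc) = 1$, so $\vol(\mc) = 1$ and $\sopt(\mc) = 1$; this direction is trivial and should be stated first for clarity.

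If I want to avoid citing Hadamard's inequality as a black box, I would instead argue directly from the singular values: the constraint $\|\mc\ve_j\|_2 = 1$ for all $j$ gives $\|\mc\|_F^2 = \sum_j \|\mc\ve_j\|_2^2 = k$, hence $\sum_{j=1}^k \sigma_j(\mc)^2 = k$. By the AM-GM inequality applied to the nonnegative numbers $\sigma_j(\mc)^2$,
\begin{equation*}
\vol(\mc)^{2/k} = \left(\prod_{j=1}^k \sigma_j(\mc)^2\right)^{1/k} \le \frac{1}{k}\sum_{j=1}^k \sigma_j(\mc)^2 = 1,
\end{equation*}
so $\vol(\mc) \le 1$, with equality in AM-GM precisely when all $\sigma_j(\mc)^2$ are equal, i.e. all equal to~1, which (together with $\rank(\mc)=k$) means $\mc$ has orthonormal columns. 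For S-optimality, $\sopt(\mc) = (\prod_i \sigma_i(\mc)/\|\mc\ve_i\|_2)^{1/k} = \vol(\mc)^{1/k} \le 1$ with the same equality case.

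The main obstacle — really the only non-routine point — is making the equality characterization airtight: AM-GM gives equality iff all $\sigma_j(\mc)^2$ coincide, and one must note that their common value is forced to be~$1$ by the normalization $\sum \sigma_j^2 = k$, and then invoke that a matrix whose singular values are all~$1$ has orthonormal columns (equivalently $\mc^T\mc = \mi_k$). One should also be slightly careful that the S-optimality identity $\sopt(\mc) = \vol(\mc)^{1/k}$ used above relies on $\|\mc\ve_i\|_2 = 1$, which is exactly the hypothesis, so the reduction of part~2 to part~1 is clean.
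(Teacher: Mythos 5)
Your proposal is correct and matches the paper's argument: the AM--GM argument on the squared singular values (using $\|\mc\|_F^2=k$) with the equality case forcing all $\sigma_j(\mc)=1$ is exactly the paper's proof of the volume part, and the reduction $\sopt(\mc)=\vol(\mc)^{1/k}$ under unit-norm columns is how the paper handles S-optimality. Your alternative route via Hadamard's inequality on the Gram matrix (with its equality condition for positive definite matrices) is also sound and is explicitly mentioned in the paper as an alternative, so there is nothing to fix.
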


\begin{proof} 
The assumptions imply 
\begin{align}\label{e_srk}
\|\mc\|_F^2 = \sum_{j=1}^k \|\mc\ve_j\|_2^2 = k.
\end{align}
We tackle each criterion, in turn.
    \begin{enumerate}
      \item \emph{Volume maximization:}
From~(\ref{e_srk}) and the relation between the arithmetic and geometric means follows
\begin{align}\label{e_mean}
\begin{split}
1=\frac{\|\mc\|_F^2}{k}=\frac{\sigma_1(\mc)^2+\cdots +\sigma_k(\mc)^2}{k}&\geq \sqrt[k]{\sigma_1(\mc)^2\cdots \sigma_k(\mc)^2}\\
&=\vol(\mc)^{2/k}.
\end{split}
\end{align}
Thus $\vol(\mc)\leq 1$, which shows the inequality. Alternatively this also follows from Hadamard's inequality 
\cite[Corollary 7.8.3]{HoJoI}
and the fact that the columns have unit two-norm.

As for the equality, if $\mc$ has orthonormal columns, then all singular values $\sigma_j(\mc)=1$, $1\leq j\leq k$. Hence $\vol(\mc)=1$.

Conversely, if $\vol(\mc)=1$, then $\vol(\mc)^{2/k}=1$, and
(\ref{e_mean}) implies that  the arithmetic and geometric means are the same. This can only happen if all $\sigma_j(\mc)$ are equal. Thus, $\sigma_j(\mc)=1$, $1\leq j\leq k$, and $\mc$ has orthonormal columns.
        \item  \emph{S-optimality maximization:} The inequality follows from Hadamard's inequality \cite[Theorem 7.8.1]{HoJoI},
\begin{equation*}
\vol(\mc)^2=\det(\mc^T\mc)\leq 
\prod_{i=1}^k{(\mc^T\mc)_{ii}}=\prod_{i=1}^k{\|\mc\ve_i\|_2^2}.
\end{equation*}
The equality follows from the assumption that $\mc$
has unit-norm columns, so that 
$\sopt(\ms)=\vol(\mc)^{1/k}$, and from the previous part.
A more complicated proof of the equality 
is presented in \cite[Theorem 3.2]{SX2016}.
    \end{enumerate}
\end{proof}

\subsection{Maximal relative volume}\label{s_unitnorm2}
We show that the relative volume is bounded above by~1, and that only matrices with orthonormal columns have a relative volume equal to~1.

\begin{lemma}\label{lem:orth} 
Let $\mc\in \mathbb{R}^{m \times k}$ with $\rank(\mc)=k$. Then $0<\rvol(\mc) \le 1$.

Furthermore, $\rvol(\mc) = 1$  if and only if the columns of $\mc$ are orthonormal.
\end{lemma}
\begin{proof}
The inequality follows from the singular
value ordering~(\ref{e_so}).

As for the equality, if $\mc$ has orthonormal columns, then all singular values of $\mc$ are
equal to~1,
and $\rvol(\mc) = 1$. Conversely, if $\rvol(\mc) = 1$ then 
\begin{equation}\label{e_ortho}
\prod_{j=1}^k{\frac{\sigma_j(\mc)}{\sigma_1(\mc)}} = 1.
\end{equation}
Since all factors $\sigma_j(\mc)/\sigma_1(\mc)\leq 1$, (\ref{e_ortho}) can only 
hold if all factors $\sigma_j(\mc)/\sigma_1(\mc)=1$.
Hence, all singular values of $\mc$ are equal to~1, and $\mc$ has orthonormal columns.
 \end{proof}

We show that removal of columns can only
 increase the relative volume. 
 
\begin{lemma}\label{l_inter}
Let $\mc\in\real^{m\times k}$ with $\rank(\mc)=k$,
and let $\mc_{\ell}\in\real^{m\times \ell}$ be a 
column submatrix of $\mc$, $1\leq \ell<k$.
Then
\begin{equation*}
\rvol(\mc)\leq \rvol(\mc_{\ell})\leq \rvol(\mc_1)=1.
\end{equation*}
\end{lemma}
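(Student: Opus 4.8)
The plan is to reduce to the removal of a single column and then invoke singular value interlacing. First I would note that it suffices to treat $\ell=k-1$: since $\rank(\mc)=k$ means the columns of $\mc$ are linearly independent, every column submatrix again has full column rank, so $\mc_{\ell}$ can be reached from $\mc$ by deleting one column at a time; chaining the one-column inequalities then yields the general bound, and the final link is $\rvol(\mc_1)=1$, which is immediate because an $m\times 1$ matrix of rank one has a single singular value, whence $\rvol(\mc_1)=\sigma_1(\mc_1)/\sigma_1(\mc_1)=1$.

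For the one-column step, write $\mc_{k-1}$ for a submatrix obtained by deleting one column of $\mc$; without loss of generality (permuting columns changes neither the singular values of $\mc$ nor those of $\mc_{k-1}$) assume $\mc_{k-1}^T\mc_{k-1}$ is the leading principal submatrix of $\mc^T\mc$. Cauchy interlacing for the eigenvalues of a symmetric matrix and its principal submatrices---equivalently the interlacing~(\ref{e_inter})---gives
\begin{equation*}
\sigma_j(\mc)\geq \sigma_j(\mc_{k-1})\geq \sigma_{j+1}(\mc),\qquad 1\leq j\leq k-1.
\end{equation*}
In particular $\sigma_1(\mc_{k-1})\leq\sigma_1(\mc)$. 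The lower bounds $\sigma_{j+1}(\mc)\leq\sigma_j(\mc_{k-1})$ then let me estimate the volume, using $\vol(\mc)=\sigma_1(\mc)\prod_{j=1}^{k-1}\sigma_{j+1}(\mc)$:
\begin{equation*}
\vol(\mc)\leq \sigma_1(\mc)\prod_{j=1}^{k-1}\sigma_j(\mc_{k-1})=\sigma_1(\mc)\,\vol(\mc_{k-1}).
\end{equation*}

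Combining the two displayed facts, I would conclude
\begin{equation*}
\rvol(\mc)=\frac{\vol(\mc)}{\sigma_1(\mc)^k}\leq\frac{\sigma_1(\mc)\,\vol(\mc_{k-1})}{\sigma_1(\mc)^k}=\frac{\vol(\mc_{k-1})}{\sigma_1(\mc)^{k-1}}\leq\frac{\vol(\mc_{k-1})}{\sigma_1(\mc_{k-1})^{k-1}}=\rvol(\mc_{k-1}),
\end{equation*}
the last step using $0<\sigma_1(\mc_{k-1})\leq\sigma_1(\mc)$. The only subtle point I anticipate is the bookkeeping in the volume estimate: comparing the factors $\sigma_j(\mc)/\sigma_1(\mc)$ term by term against $\sigma_j(\mc_{k-1})/\sigma_1(\mc_{k-1})$ does \emph{not} work, since those individual ratios can move in the wrong direction; the trick is to pair $\sigma_{j+1}(\mc)$ with $\sigma_j(\mc_{k-1})$ and absorb the leftover factor $\sigma_1(\mc)$ into the denominator, as above.
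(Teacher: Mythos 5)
Your proof is correct and rests on the same key ingredient as the paper's: singular value interlacing with the shifted pairing $\sigma_{j+1}(\mc)\leq\sigma_j(\mc_{k-1})$ together with $\sigma_1(\mc_{k-1})\leq\sigma_1(\mc)$. The only difference is organizational — you reduce to one-column deletions and chain, whereas the paper applies the interlacing bound $\sigma_{k-\ell+j}(\mc)\leq\sigma_j(\mc_{\ell})$ directly for general $\ell$ after discarding the leading factors $\sigma_j(\mc)/\sigma_1(\mc)\leq 1$, which gives the result in a single step.
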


\begin{proof}
Since $\mc$ has full column rank, all columns are non-zero,
and any non-zero column $\mc_1$ attains the largest relative volume of~1.

For $\ell\geq 2$, singular value interlacing (\ref{e_inter}) implies
\begin{equation*}
\sigma_{k-\ell+j}(\mc)\leq\sigma_j(\mc_{\ell})\leq \sigma_j(\mc),\qquad 1\leq j\leq \ell.
\end{equation*}
Since all factors $\sigma_j(\mc)/\sigma_1(\mc)\leq 1$,
\begin{align*}
\rvol(\mc)=\prod_{j=1}^k{\frac{\sigma_j(\mc)}{\sigma_1(\mc)}}\leq 
\prod_{j=1}^{\ell}{\frac{\sigma_{k-\ell+j}(\mc)}{\sigma_1(\mc)}}\leq 
\prod_{j=1}^{\ell}{\frac{\sigma_j(\mc_{\ell})}{\sigma_1(\mc_{\ell})}} 
=\rvol(\mc_{\ell}).
\end{align*}
\end{proof}

\subsection{Minimal two-norm and Schatten p-norms}\label{s_unitnorm3}
We show that only matrices with $k$ orthonormal columns have a
two-norm equal to~1, and a Schatten $p$-norm 
equal to~$k^{1/p}$ for $p>2$.
In the special case $p=2$, the equality
 $\|\mc\|_{(2)}=\|\mc\|_F=\sqrt{k}$ is always true for unit-norm columns, regardless of whether they are orthonormal.

\begin{lemma}\label{l_norm}
Let $\mc \in \mathbb{R}^{m \times k}$ with $\rank(\mc) = k$ and $\|\mc\ve_j\|_2=1$, $1\leq j\leq k$. Then each of the following inequalities holds with equality if and only if $\mc$ has orthonormal
columns.
\begin{itemize}
\item \emph{Two-norm}: $\|\mc\|_2 \ge 1$
\item \emph{Schatten $p$-norm}: $\|\mc \|_{(p)} \ge k^{1/p}$ for $p > 2$.
\end{itemize}
\end{lemma}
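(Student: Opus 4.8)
The plan is to handle the two norms separately but with the same underlying idea: the constraint $\|\mc\|_F^2 = k$ from~(\ref{e_srk}) pins down the sum of the squared singular values, and the two-norm or the Schatten $p$-norm ($p>2$) is extremal over the simplex of such singular value vectors precisely when the vector is ``flat,'' i.e.\ all singular values equal~$1$, which by $\rank(\mc)=k$ and the unit-column assumption forces orthonormal columns. First I would record $\sum_{j=1}^k \sigma_j(\mc)^2 = \|\mc\|_F^2 = k$ from~(\ref{e_srk}).

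For the two-norm: since $\sigma_1(\mc)^2 \ge \tfrac1k\sum_{j=1}^k\sigma_j(\mc)^2 = 1$, we get $\|\mc\|_2 = \sigma_1(\mc) \ge 1$. Equality forces $\sigma_1(\mc)^2$ to equal the average of the $\sigma_j(\mc)^2$, which (as all terms are $\le \sigma_1(\mc)^2$) can only happen if $\sigma_j(\mc) = \sigma_1(\mc) = 1$ for all $j$; hence $\mc^T\mc = \mi_k$ and the columns are orthonormal. Conversely orthonormal columns give all singular values~$1$ and $\|\mc\|_2 = 1$.

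For the Schatten $p$-norm with $p>2$: I would view $\|\mc\|_{(p)}^p = \sum_{j=1}^k \sigma_j(\mc)^p$ as a vector $p$-norm of the singular value vector and compare it to the $2$-norm. Since $p>2$, the power-mean (or $\ell^2$--$\ell^p$ norm) inequality gives, after normalizing by the fixed quantity $\sum \sigma_j^2 = k$, that $\big(\sum_j \sigma_j^p\big)^{2/p} \ge k^{2/p-1}\sum_j \sigma_j^2$; concretely, $\|\mc\|_{(p)} = \big(\sum_j \sigma_j^p\big)^{1/p} \ge k^{1/p-1/2}\,\|\mc\|_F = k^{1/p-1/2}\sqrt{k} = k^{1/p}$, using~\cite[(5.4.21)]{HoJoI} on the singular value vector. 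The equality case of that norm inequality (for $p>2$, strictly) requires the singular value vector to have all entries equal, i.e.\ $\sigma_j(\mc) = 1$ for all $j$, hence orthonormal columns; the converse direction is again immediate since orthonormal columns give singular value vector $(1,\dots,1)$ and $\|\mc\|_{(p)} = k^{1/p}$.

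I do not expect a genuine obstacle here; the only point requiring care is the \emph{equality characterization} in the $p$-norm case, where one must invoke the strict-convexity/strict-monotonicity of power means for $p>2$ (equivalently, that $\ell^p = c\,\ell^2$ on a vector of fixed $\ell^2$-norm forces all coordinates equal) rather than just the inequality, and then translate ``all singular values equal $1$'' together with $\rank(\mc)=k$ into $\mc^T\mc=\mi_k$. The $p=2$ remark needs no proof since $\|\mc\|_F^2 = k$ holds for any unit-norm columns by~(\ref{e_srk}).
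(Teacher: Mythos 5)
Your proposal is correct, and for the inequality parts it follows the paper's route exactly: both invoke (\ref{e_srk}) and the $\ell^2$--$\ell^p$ comparison \cite[(5.4.21)]{HoJoI} to get $\|\mc\|_{(p)}\ge k^{1/p}$ (your two-norm bound via $\sigma_1(\mc)^2\ge\frac1k\sum_j\sigma_j(\mc)^2$ is an equivalent substitute for the paper's $\|\mc\|_2\ge\max_i\|\mc\ve_i\|_2$). The one genuine difference is the equality characterization for $p>2$: you appeal to the equality condition of the power-mean/H\"older inequality (all $\sigma_j(\mc)$ equal), which is correct but is an extra fact you must supply, since the cited inequality (5.4.21) is stated only as an inequality; the paper instead sidesteps this by writing $\|\mc\|_{(p)}=k^{1/p}$ together with $\|\mc\|_F^2=k$ as $\sum_{j=1}^k g_p(\sigma_j(\mc))=0$ for the auxiliary function $g_p(x)=\tfrac{2x^p}{p}-x^2+(1-\tfrac2p)$, which is nonnegative on $x>0$ with unique zero at $x=1$, so every singular value equals $1$ directly. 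Your route is the more standard one and slightly shorter once the strict equality case of the power mean is granted; the paper's route is self-contained at the cost of introducing $g_p$. Either way the conclusion $\sigma_1(\mc)=\cdots=\sigma_k(\mc)=1$, hence $\mc^T\mc=\mi_k$, is reached correctly, and your observation that the $p=2$ case needs no proof matches the paper's remark that $\|\mc\|_F=\sqrt{k}$ holds for any unit-norm columns.
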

\begin{proof}  We tackle the two cases separately. 
\begin{enumerate}
    \item \emph{Two-norm}: The inequality follows from the definition of the norm 
    \[ \|\mc\|_2 \ge \max_{1 \le i \le k} \|\mc \ve_i\|_2 = 1.  \] 
    As for the equality, if $\mc$ has orthonormal columns, then $\|\mc\|_2 = \sigma_1(\mc) = 1$. 
    
    Conversely, if $\|\mc\|_2= 1$, then~\eqref{e_srk} \changes{in Section~\ref{s_unitnorm1}}
    and the singular value ordering imply $\sigma_1(\mc) = \dots = \sigma_k(\mc) = 1$. That is, $\mc$ has orthonormal columns. 
    \item \emph{Schatten $p$-norm for $p>2$}: The norm inequalities~\cite[Equation (5.4.21)]{HoJoI} imply
    \[ \sqrt{k}  = \|\mc\|_F \le k^{(\tfrac{1}{2}- \tfrac{1}{p})} \|\mc\|_{(p)}, \qquad p >2.  \] 
    Rearranging gives the inequality $\|\mc \|_{(p)} \ge k^{1/p}$ for $p> 2$.
    

    As for the equality, we use the same idea as in the proof of~\cite[Proposition 3.1]{eswar2024bayesian}. If $\mc$ has orthonormal columns, then all the singular values of $\mc$ are equal to~$1$ and $\|\mc\|_{(p)} = k^{1/p}$. 
    
    Conversely, if $\|\mc\|_{(p)} = k^{1/p}$, then (\ref{e_srk}) \changes{in Section~\ref{s_unitnorm1}} implies
    \begin{equation}\label{e_83}
    \sum_{j=1}^k \left(  \frac{2}{p}\sigma_j(\mc)^p  - \sigma_j(\mc)^2 + (1-\tfrac{2}{p}) \right)  = 0.
    \end{equation}
    This sum has the form 
    \begin{equation*}
    \sum_{j=1}^kg_p(\sigma_j(\mc)) = 0\qquad \text{where} \quad g_p(x) \equiv \tfrac{2x^p}{p} - x^2 + (1-\tfrac{2}{p}).
    \end{equation*}
    For $x >  0$, $g_p(x)$ has the unique global minimizer $x_*=1$ with $g_p(x_*) = 0$. 
    Since the singular values are positive, $\sum_{j=1}^kg(\sigma_j(\mc))$ is a sum of nonnegative summands. Thus equality in (\ref{e_83}) is achieved only if $\sigma_1(\mc) = \dots = \sigma_k(\mc) = 1$, implying that $\mc$ has orthonormal columns.  
    \end{enumerate}   
    \end{proof}

Below is a justification for  limiting Schatten $p$-norms
to the case $p\geq 2$: The lower bound $\|\mc\|_{(p)}\geq k^{1/p}$ for $p>2$ turns into an upper bound for $1\leq p<2$. 

\begin{remark}\label{r_schattenp}
Let $\mc \in \mathbb{R}^{m \times k}$ with $\rank(\mc) = k$ and $\|\mc\ve_j\|_2=1$, $1\leq j\leq k$.
For $1 \le p < 2$, \cite[(5.4.21)]{HoJoI} implies
    \[ \sqrt{k} \le \|\mc\|_{(p)} \le k^{(\tfrac{1}{p}-\tfrac12)} \sqrt{k} = k^{1/p} \]
and $\|\mc\|_{(p)} = k^{1/p}$ if and only if $\mc$ has orthonormal columns.
\end{remark}

\subsection{Minimal pseudo-inverse norms}\label{s_unitnorm4}
We show that matrices with $k$ unit-norm columns have a
pseudo-inverse whose two-norm is bounded below by~$1$, and
whose $p$-norm is bounded below by $k^{1/p}$ for $p\geq 2$; and that
only matrices with $k$ orthonormal columns have pseudo-inverses with
a two-norm equal to~1, and a Schatten $p$-norm equal
to~$k^{1/p}$ for $p>2$.

\begin{lemma}\label{l_pinv}
Let $\mc \in \mathbb{R}^{m \times k}$ with $\rank(\mc) = k$ and $\|\mc\ve_j\|_2=1$, $1\leq j\leq k$. Then each of the following inequalities holds with equality if and only if $\mc$ has orthonormal
columns.
    \begin{itemize}
\item \emph{Two-norm of pseudo-inverse}: $\|\mc^\dagger \|_2\geq 1$
\item \emph{Frobenius-norm of pseudo-inverse}: $\|\mc^\dagger \|_F\geq \sqrt{k}$
\item \emph{Schatten $p$-norm of pseudo-inverse}: $\|\mc^\dagger \|_{(p)}\geq k^{1/p}$.
\end{itemize}

\end{lemma}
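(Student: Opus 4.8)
The plan is to translate each statement about the pseudo-inverse into a statement about the reciprocals of the singular values of $\mc$, and then reuse the arguments already established for $\mc$ itself in Lemma~\ref{l_norm}. Since $\rank(\mc)=k$, the singular values of $\mc^{\dagger}$ are exactly $1/\sigma_k(\mc)\geq\cdots\geq 1/\sigma_1(\mc)>0$, so $\|\mc^{\dagger}\|_2=1/\sigma_k(\mc)$, $\|\mc^{\dagger}\|_F^2=\sum_{j=1}^k \sigma_j(\mc)^{-2}$, and $\|\mc^{\dagger}\|_{(p)}^p=\sum_{j=1}^k \sigma_j(\mc)^{-p}$. Throughout we use~(\ref{e_srk}), namely $\sum_{j=1}^k\sigma_j(\mc)^2=k$, which holds because the columns have unit norm.

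For the two-norm, I would argue that $\sigma_k(\mc)\leq 1$: from~(\ref{e_srk}) and the ordering $\sigma_k(\mc)\leq\sigma_j(\mc)$ we get $k\sigma_k(\mc)^2\leq\sum_j\sigma_j(\mc)^2=k$, hence $\sigma_k(\mc)\leq1$ and $\|\mc^{\dagger}\|_2=1/\sigma_k(\mc)\geq1$. Equality forces $\sigma_k(\mc)=1$, which combined with $\sum_j\sigma_j(\mc)^2=k$ and the ordering forces all singular values to equal $1$, i.e. orthonormal columns; conversely orthonormal columns give $\mc^{\dagger}$ with all singular values $1$. For the Frobenius norm, I would apply the Cauchy--Schwarz (or power-mean) inequality: $k^2=\big(\sum_j\sigma_j(\mc)^2\big)^2=\big(\sum_j \sigma_j(\mc)\cdot\sigma_j(\mc)\big)^2$ — more cleanly, by Cauchy--Schwarz $k = \sum_j 1 = \sum_j \sigma_j(\mc)\cdot\sigma_j(\mc)^{-1}\cdot\ldots$; the simplest route is the AM--HM type bound $\big(\sum_j\sigma_j(\mc)^2\big)\big(\sum_j\sigma_j(\mc)^{-2}\big)\geq k^2$, which gives $\|\mc^{\dagger}\|_F^2\geq k^2/k=k$, with equality iff all $\sigma_j(\mc)^2$ are equal, hence (with the trace constraint) all equal to $1$.

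For the Schatten $p$-norm with $p>2$, I would reuse the convex-function argument from the proof of Lemma~\ref{l_norm}: substituting $x=\sigma_j(\mc)$, consider $h_p(x)\equiv \tfrac{2}{p}x^{-p}+ (1-\tfrac{2}{p}) - x^2$ — note this is obtained from $g_p$ by replacing $x^p$ with $x^{-p}$ — and check that for $x>0$ it has a unique global minimum at $x=1$ with value $0$ (its derivative is $-2x^{-p-1}-2x$, hmm this is always negative, so I would instead work with $\tilde h_p(x)\equiv \tfrac{2}{p}x^{-p} - 1 + (1-\tfrac2p)x^2$, which vanishes at $x=1$, has derivative $-2x^{-p-1}+2(1-\tfrac2p)x$ vanishing only at $x=1$, and is convex, hence $\geq 0$). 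Then $\sum_j \tilde h_p(\sigma_j(\mc))=\tfrac2p\|\mc^{\dagger}\|_{(p)}^p - k + (1-\tfrac2p)k = \tfrac2p\big(\|\mc^{\dagger}\|_{(p)}^p - k\big)\geq 0$, giving $\|\mc^{\dagger}\|_{(p)}\geq k^{1/p}$, with equality iff every $\sigma_j(\mc)=1$. The main obstacle is getting the auxiliary one-variable function right so that it is nonnegative with a unique zero at $x=1$ under the constraint $\sum_j\sigma_j(\mc)^2=k$; once that calculus fact is pinned down, each case is a short computation paralleling Lemma~\ref{l_norm}.
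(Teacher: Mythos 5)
Your two-norm argument coincides with the paper's ($\sigma_k(\mc)\le 1$ from (\ref{e_srk}), then invert, with the same equality analysis), and your Frobenius-norm argument via Cauchy--Schwarz is correct; it is a mild variant of the paper's route, which instead obtains $p=2$ as a special case of its Schatten $p$-norm treatment. Both of those parts are fine.

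The gap is in the Schatten $p$-norm case. The auxiliary function you settled on, $\tilde h_p(x)=\tfrac{2}{p}x^{-p}-1+(1-\tfrac{2}{p})x^2$, is \emph{not} nonnegative on $x>0$, and its derivative does not vanish at $x=1$: since $\tilde h_p'(x)=-2x^{-p-1}+2(1-\tfrac{2}{p})x$, you get $\tilde h_p'(1)=-\tfrac{4}{p}<0$, and the unique critical point of this convex function is $x_*=\bigl(p/(p-2)\bigr)^{1/(p+2)}>1$, where its minimum value is strictly negative; in particular $\tilde h_p(x)<0$ for $x$ slightly larger than $1$. So the key step ``the sum is a sum of nonnegative summands'' fails, and with it both the inequality and the equality characterization for $p>2$. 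The repair is to redistribute the constraint differently: take $g(x)=x^2+\tfrac{2}{p}x^{-p}-(1+\tfrac{2}{p})$, which is exactly the function the paper uses in its equality analysis. It satisfies $g(1)=0$, $g'(x)=2x-2x^{-p-1}$ vanishes only at $x=1$, and $g$ is convex, hence $g\ge 0$ with unique zero at $x=1$; moreover $g(x)-\tilde h_p(x)=\tfrac{2}{p}(x^2-1)$, which sums to zero under (\ref{e_srk}), so $\sum_j g(\sigma_j(\mc))=\tfrac{2}{p}\bigl(\|\mc^\dagger\|_{(p)}^p-k\bigr)$ and the rest of your computation goes through verbatim, including the equality case. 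Alternatively, Jensen's inequality for the convex map $t\mapsto t^{-p/2}$ applied to $t_j=\sigma_j(\mc)^2$ gives $\|\mc^\dagger\|_{(p)}^p\ge k$ directly, with equality forcing all $\sigma_j(\mc)^2$ equal, hence equal to $1$ by (\ref{e_srk}).
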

\begin{proof}
We tackle each case separately. 

\begin{itemize}
\item \emph{Two-norm of the pseudo-inverse:}
From~(\ref{e_srk}) \changes{in Section~\ref{s_unitnorm1}} follows
\begin{equation*}
k=\|\mc\|_F^2=\sum_{j=1}^k{\sigma_j(\mc)^2}
\geq k\sigma_k(\mc)^2
\end{equation*}
Thus $\sigma_k(\mc)\leq 1$ and 
$\|\mc^{\dagger}\|_2=1/\sigma_k(\mc)\geq 1$, which shows the inequality.

As for the equality, if $\mc$ has orthonormal columns, then
$\mc^{\dagger}=\mc^T$, hence $\|\mc^{\dagger}\|_2=1$.

Conversely, if $\|\mc^{\dagger}\|_2 = 1$, then the singular value ordering $\sigma_1(\mc) \ge \cdots \ge \sigma_k(\mc) = 1$ and (\ref{e_srk}) \changes{in Section~\ref{s_unitnorm1}}
imply $\sigma_j(\mc)= 1$,  $1\leq j\leq k$. Thus, $\mc$ has orthonormal columns.

\item \emph{Frobenius norm of the pseudo-inverse:} This is a special case of the Schatten $p$-norm for $p=2$.
\item \emph{Schatten p-norm of the pseudo-inverse:} From
\[  k = \|\mc\|_F^2 = \sum_{j=1}^k \sigma_j(\mc)^2 \le k \sigma_1(\mc)^2\]
follows $\sigma_1(\mc) \ge 1$. Therefore,  $$\|\mc^\dagger\|_{(p)} \ge \left(\frac{k}{\sigma_1(\mc)^p} \right)^{1/p}  \ge k^{1/p}. $$
As for the equality, if $\mc$ has orthornormal columns, then $\sigma_j(\mc) = 1$, $1\leq j\leq k$. Hence, $\|\mc^\dagger\|_{(p)} = k^{1/p}$. 

Conversely, if $\|\mc^\dagger\|_{(p)} = k^{1/p}$, then~\eqref{e_srk} \changes{in Section~\ref{s_unitnorm1}} implies 
\begin{equation}\label{e_84}
0 = \|\mc\|_F^2 +\frac{2}{p}\|\mc^\dagger\|_{(p)}^p - k(1+\tfrac{2}{p}) = \sum_{j=1}^k (\sigma_j(\mc)^2 + \tfrac{2}{p\sigma_j(\mc)^p} - (1+\tfrac{2}{p})).   
\end{equation}
Each summand equals $g(\sigma_j(\mc))$ for $g(x)\equiv x^2 + \tfrac{2}{px^p} - (1+\tfrac{2}{p})$.  For $x > 0$, the function $g(x)$ is nonnegative with  global minimizer $x_* = 1$ and $g(x_*) = 0$. Thus, equality in
(\ref{e_84}) is achieved if and only if $\sigma_j(\mc)=1$, $1\leq j\leq k$,
and $\mc$ has orthonormal columns. 
\end{itemize}
\end{proof}

\subsection{Minimal condition numbers}\label{s_unitnorm5}
We derive lower bounds for condition numbers
of matrices with unit-norm columns, and show that 
the lower bounds are attained only by matrices 
with orthonormal columns.

\begin{lemma}\label{l_cond}
Let $\mc \in \mathbb{R}^{m \times k}$ with $\rank(\mc) = k$ and $\|\mc\ve_j\|_2=1$, $1\leq j\leq k$. Then each of the following inequalities holds with equality if and only if $\mc$ has orthonormal
columns.
\begin{itemize}
\item Two-norm condition number: $\kappa_2(\mc)\geq 1$
 \item Frobenius norm condition number: $\kappa_F(\mc) \ge k$
\item Schatten $p$-norm condition number: $\kappa_{(p)}(\mc) \ge k^{2/p}$
\item Mixed condition number: $\kappa_D(\mc)   \ge \sqrt{k}$
\item Schatten $p$-norm mixed condition number $\kappa_{D,p}(\mc)  \ge k^{1/p}$ 
\end{itemize}
\end{lemma}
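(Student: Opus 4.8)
The plan is to treat each of the five condition numbers in turn, in each case deriving the lower bound as a product of a lower bound on a norm of $\mc$ and a lower bound on the corresponding norm of $\mc^{\dagger}$, both of which are already available from Lemmas~\ref{l_norm} and~\ref{l_pinv}. The unifying observation is that under the unit-norm column hypothesis we have the identity $\|\mc\|_F^2 = k$ from~(\ref{e_srk}), together with the singular value bounds $\sigma_1(\mc)\geq 1 \geq \sigma_k(\mc)$ that were established in those lemmas, and the equality cases of all the ingredient bounds are exactly ``$\mc$ has orthonormal columns.'' Since the product of two quantities each bounded below by a positive constant is bounded below by the product, and a product of two quantities equals the product of the bounds if and only if both factors meet their bounds, the equality characterization will follow immediately from the equality characterizations in Lemmas~\ref{l_norm} and~\ref{l_pinv} (which all coincide, being ``$\mc$ orthonormal'').

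Concretely I would argue as follows. For the two-norm condition number, $\kappa_2(\mc) = \|\mc\|_2\,\|\mc^{\dagger}\|_2 = \sigma_1(\mc)/\sigma_k(\mc) \geq 1$ since $\sigma_1(\mc)\geq\sigma_k(\mc)$; equality forces $\sigma_1(\mc)=\sigma_k(\mc)$, hence all singular values equal, and combined with $\|\mc\|_F^2=k$ this gives $\sigma_j(\mc)=1$ for all $j$, i.e.\ orthonormal columns. For the Frobenius norm condition number, combine $\|\mc\|_F=\sqrt{k}$ (always true here by~(\ref{e_srk})) with $\|\mc^{\dagger}\|_F\geq\sqrt{k}$ from Lemma~\ref{l_pinv} to get $\kappa_F(\mc)\geq k$; equality needs $\|\mc^{\dagger}\|_F=\sqrt{k}$, which by Lemma~\ref{l_pinv} holds iff $\mc$ has orthonormal columns. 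For the Schatten $p$-norm condition number, multiply $\|\mc\|_{(p)}\geq k^{1/p}$ (Lemma~\ref{l_norm}) by $\|\mc^{\dagger}\|_{(p)}\geq k^{1/p}$ (Lemma~\ref{l_pinv}) to obtain $\kappa_{(p)}(\mc)\geq k^{2/p}$, with equality iff both factors are extremal, i.e.\ iff $\mc$ is orthonormal. The mixed condition number is $\kappa_D(\mc)=\|\mc\|_F\,\|\mc^{\dagger}\|_2 = \sqrt{k}\,\|\mc^{\dagger}\|_2 \geq \sqrt{k}$ using~(\ref{e_srk}) and $\|\mc^{\dagger}\|_2\geq 1$ from Lemma~\ref{l_pinv}, equality iff $\|\mc^{\dagger}\|_2=1$ iff orthonormal. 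Finally $\kappa_{D,p}(\mc) = \|\mc\|_{(p)}\,\|\mc^{\dagger}\|_2 \geq k^{1/p}\cdot 1 = k^{1/p}$ by Lemma~\ref{l_norm} and Lemma~\ref{l_pinv}, with equality iff both factors are extremal, which again (from Lemma~\ref{l_norm}, since $p>2$) means $\mc$ has orthonormal columns.

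There is essentially no obstacle here; the only points requiring a little care are (i) making sure that in each product-type bound the equality case genuinely decouples — this works because each ingredient bound's equality case is the \emph{same} condition (orthonormality), so ``both factors extremal'' is not vacuous and is equivalent to orthonormality — and (ii) noting that for $\kappa_F$ and $\kappa_D$ the factor $\|\mc\|_F=\sqrt{k}$ is an equality for \emph{all} matrices with unit-norm columns, so the equality characterization is carried entirely by the pseudo-inverse factor, whereas for $\kappa_{(p)}$ and $\kappa_{D,p}$ one should remark that $\|\mc\|_{(p)}=k^{1/p}$ already forces orthonormality (needing $p>2$), so either factor suffices. I would state the proof compactly, handling the two-norm case by the direct $\sigma_1/\sigma_k$ argument and the remaining four by citing the relevant parts of Lemmas~\ref{l_norm} and~\ref{l_pinv} and multiplying.
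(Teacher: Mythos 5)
Your proposal is correct, and for three of the five cases (two-norm, mixed, and Schatten $p$-norm mixed condition numbers) it is essentially the paper's argument: a direct singular-value/definition bound with the converse pinned down via the ordering~(\ref{e_so}) and $\|\mc\|_F^2=k$ from~(\ref{e_srk}). Where you genuinely diverge is in the Frobenius and Schatten $p$-norm condition numbers: the paper proves $\kappa_F(\mc)\ge k$ and $\kappa_{(p)}(\mc)\ge k^{2/p}$ by applying the Cauchy--Schwarz inequality to the vectors of singular values $\bigl(\sigma_j(\mc)^p\bigr)_j$ and their reciprocals, and then reads the equality case off the Cauchy--Schwarz equality condition; you instead obtain the same bounds modularly as products of the lower bounds already established in Lemmas~\ref{l_norm} and~\ref{l_pinv}, and transfer their equality characterizations through the elementary fact that a product of quantities bounded below by positive constants attains the product bound only when each factor attains its own bound. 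Your route is shorter given that Lemmas~\ref{l_norm} and~\ref{l_pinv} precede this one, and your handling of the equality cases is in places more careful than the paper's (e.g.\ you explicitly invoke~(\ref{e_srk}) to upgrade ``all singular values equal'' to ``all equal $1$'' for $\kappa_2$, and you flag that $\|\mc\|_F=\sqrt{k}$ is an identity rather than an equality case, so that for $\kappa_F$ and $\kappa_D$ the characterization rests entirely on the pseudo-inverse factor). What the paper's Cauchy--Schwarz argument buys in exchange is self-containedness and the observation that the inequalities $\kappa_F(\mc)\ge k$ and $\kappa_{(p)}(\mc)\ge k^{2/p}$ hold for any full column-rank $\mc$, the unit-norm hypothesis being needed only to force the common singular value to equal $1$; your version also correctly notes that the $p>2$ restriction matters for using the equality case of $\|\mc\|_{(p)}\ge k^{1/p}$ in the $\kappa_{D,p}$ bullet, though there (as you observe) the factor $\|\mc^{\dagger}\|_2$ alone already suffices.
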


\begin{proof}
We tackle each case separately.
\begin{enumerate}
\item \emph{Two-norm condition number:} The inequality follows from the singular value ordering~(\ref{e_so}).

As for the equality, if $\mc$ has orthonormal columns, then all its singular values are $1$ and $\kappa_2(\mc) = 1$. Conversely, if $\kappa_2(\mc) = \sigma_1(\mc)/\sigma_k(\mc) = 1$, then the singular value ordering~(\ref{e_so}) ensures
that $\sigma_j(\mc)=1$, $1\leq j\leq k$, and $\mc$ has orthonormal columns.  
 \item \emph{Frobenius norm condition number:} 
The inequality follows from applying the Cauchy-Schwartz inequality to the vectors 
 \begin{equation*}
 \vx \equiv \begin{bmatrix} \sigma_1(\mc) & \cdots & \sigma_k(\mc)
        \end{bmatrix}^T, \qquad
\vy \equiv  \begin{bmatrix} 1/\sigma_1(\mc) & 
\cdots & 1/\sigma_k(\mc) \end{bmatrix}^T,
\end{equation*}
which shows
\begin{equation*}
k = \vx^T \vy \le \|\vx\|_2 \|\vy\|_2 = \left(\sum_{j=1}^k\sigma_j(\mc)^2 \right)^{1/2} \left(\sum_{j=1}^k\sigma_j(\mc)^{-2}\right) ^{1/2}= \kappa_F(\mc).
\end{equation*}
As for the equality, if $\mc$ has orthonormal columns, then $\kappa_F(\mc) = k$. 

Conversely, if $\kappa_F(\mc) = k$, then  (\ref{e_srk}) \changes{in Section~\ref{s_unitnorm1}}
implies $\|\mc\|_F=\sqrt{k}$, hence
$\|\mc^{\dagger}\|_F^2=\sum_{j=1}^k\sigma_j(\mc)^{-2} = k$. 
As in the proof of Lemma~\ref{l_pinv}, one shows $\sigma_j(\mc) = 1$, $1\leq j\leq k$. Thus, $\mc$ has orthonormal columns.

\item \emph{Schatten $p$-norm condition number for $p>2$}: The inequality follows from applying the Cauchy-Schwartz inequality to the vectors 
 \begin{equation*}
 \vx \equiv \begin{bmatrix} \sigma_1(\mc)^p & \cdots & \sigma_k(\mc)^p
        \end{bmatrix}^T, \qquad
\vy \equiv  \begin{bmatrix} 1/\sigma_1(\mc)^p & 
\cdots & 1/\sigma_k(\mc)^p \end{bmatrix}^T,
\end{equation*}
which shows
\begin{equation*}
k = \vx^T \vy \le \|\vx\|_2 \|\vy\|_2 = \left(\sum_{j=1}^k\sigma_j(\mc)^p \right)^{1/2} \left(\sum_{j=1}^k\sigma_j(\mc)^{-p}\right) ^{1/2}= \kappa_p(\mc)^{p/2}.
\end{equation*}
Equality occurs in the Cauchy-Schwartz inequality only if $\vx$ is a multiple of~$\vy$. This ensures that all singular values are equal to~$1$, thus $\mc$ has orthonormal columns.

Conversely, if $\mc$ has orthonormal columns, then all the singular values are equal to $1$ and $\kappa_{(p)}(\mc) = k^{2/p}$. 

\item \emph{Mixed condition number}: 
The inequality follows from the definition of $\kappa_D(\mc)$. As for the equality, if $\mc$ has orthonormal columns, then $\kappa_D(\mc)=\sqrt{k}$.
Conversely, if $\kappa_D(\mc) = \sqrt{k}$, then (\ref{e_srk}) \changes{in Section~\ref{s_unitnorm1}}
implies $\sigma_k(\mc) = 1$. 
The singular value ordering $\sigma_1(\mc) \ge \cdots \ge \sigma_k(\mc) = 1$ and (\ref{e_srk}) imply $\sigma_j(\mc)= 1$,  $1\leq j\leq k$. Thus $\mc$ has orthonormal columns.

\item \emph{Schatten $p$-norm mixed condition number for $p>2$}: 
The inequality follows from the definition of $\kappa_{D,p}(\mc)$. 
As for the equality, if $\mc$ has orthonormal columns, then $\kappa_{D,p}(\mc)={k}^{1/p}$.

Conversely, if $\kappa_{D,p}(\mc) = {k}^{1/p}$, then $\sum_{j=1}^{\changes{k}}\sigma_j(\mc)^p = k\sigma_k(\mc)^p$,
so that 
\begin{equation*}
\sum_{j=1}^{\changes{k}} ( \sigma_j(\mc)^p -\sigma_k(\mc)^p) = 0.
\end{equation*}
The singular value ordering implies that all singular values are equal to~$1$, thus  $\mc$ has orthonormal columns.
\end{enumerate}
\end{proof}

We show that the removal of columns can only
 increase the condition numbers. 

\begin{lemma}\label{l_inter2}
Let $\mc\in\real^{m\times k}$ with $\rank(\mc)=k$,
and let $\mc_{\ell}\in\real^{m\times \ell}$ 
be a column submatrix of $\mc$, $1\leq \ell<k$.
Then
\begin{align*}
\kappa_{\xi}(\mc)&\geq \kappa_{\xi}(\mc_{\ell})\geq \kappa_{\xi}(\mc_1)=1, \qquad \xi\in\{2, F, (p), D, (D,p)\}.
\end{align*}
\end{lemma}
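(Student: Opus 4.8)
The plan is to prove the chain of inequalities in Lemma~\ref{l_inter2} by reducing the general case $1\le\ell<k$ to the single-column-removal step $\ell=k-1$, and then to verify that step for each of the five norm choices $\xi\in\{2,F,(p),D,(D,p)\}$ using singular value interlacing. For the single-step reduction, I would argue that if $\mc_{k-1}$ is obtained from $\mc$ by deleting one column (after a harmless column permutation, the last one), then iterating $\kappa_\xi(\mc)\ge\kappa_\xi(\mc_{k-1})\ge\cdots\ge\kappa_\xi(\mc_\ell)$ gives the first inequality, and the base case $\kappa_\xi(\mc_1)=1$ holds because a single nonzero column has one singular value, so numerator and denominator coincide (for the mixed numbers, $\|\mc_1\|_F=\|\mc_1\|_{(p)}=\sigma_1$ and $\|\mc_1^\dagger\|_2=1/\sigma_1$). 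So the crux is the monotonicity under deletion of one column.

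For the one-column-deletion step I would invoke Cauchy interlacing for singular values \cite[Corollary 8.6.3]{GovL13}: writing $\hat{\mc}=\mc_{k-1}\in\real^{m\times(k-1)}$ for the matrix with the last column of $\mc$ removed,
\begin{equation*}
\sigma_1(\mc)\ge\sigma_1(\hat{\mc})\ge\sigma_2(\mc)\ge\sigma_2(\hat{\mc})\ge\cdots\ge\sigma_{k-1}(\mc)\ge\sigma_{k-1}(\hat{\mc})\ge\sigma_k(\mc)\ge 0.
\end{equation*}
In particular $\sigma_1(\mc)\ge\sigma_1(\hat{\mc})$ and $\sigma_{k-1}(\hat{\mc})\ge\sigma_k(\mc)$, i.e. $\|\mc\|_2\ge\|\hat{\mc}\|_2$ while $\|\mc^\dagger\|_2=1/\sigma_k(\mc)\ge 1/\sigma_{k-1}(\hat{\mc})=\|\hat{\mc}^\dagger\|_2$; multiplying the two gives $\kappa_2(\mc)\ge\kappa_2(\hat{\mc})$. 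The mixed number $\kappa_D$ is handled the same way once I note $\|\mc\|_F^2=\|\hat{\mc}\|_F^2+\|\vc\|_2^2\ge\|\hat{\mc}\|_F^2$ where $\vc$ is the deleted column, so $\|\mc\|_F\ge\|\hat{\mc}\|_F$, and $\|\mc^\dagger\|_2\ge\|\hat{\mc}^\dagger\|_2$ as above; likewise $\kappa_{D,p}$ uses $\|\mc\|_{(p)}\ge\|\hat{\mc}\|_{(p)}$, which follows from the interlacing inequalities termwise (each $\sigma_j(\hat{\mc})\le\sigma_j(\mc)$).

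The genuinely interlaced cases are $\kappa_F$ and $\kappa_{(p)}$, where both $\|\mc\|$ and $\|\mc^\dagger\|$ change, so a crude termwise bound on each factor does not obviously combine in the right direction — this is the step I expect to be the main obstacle. Here I would use Lemma~\ref{l_fi} (and its Schatten-$p$ analogue, which the paper states for partitioned pseudo-inverses) to get $\|\mc^\dagger\|_{(p)}^p\ge\|\hat{\mc}^\dagger\|_{(p)}^p+\|\vc^\dagger\|_2^p=\|\hat{\mc}^\dagger\|_{(p)}^p+1$, combined with $\|\mc\|_{(p)}^p=\|\hat{\mc}\|_{(p)}^p+(\text{something}\le 1)$ after suitable reduction — but this mismatch of ``$+1$'' versus ``$\le+1$'' is exactly what makes a naive product bound fail, so instead I would work with the singular value lists directly: by the interlacing display above, the multiset $\{\sigma_j(\hat{\mc})\}_{j=1}^{k-1}$ is, entrywise, sandwiched so that $\kappa_F(\mc)^2=\bigl(\sum_j\sigma_j(\mc)^2\bigr)\bigl(\sum_j\sigma_j(\mc)^{-2}\bigr)$ and I would show this product dominates the corresponding product for $\hat{\mc}$ by a pairing/rearrangement argument — pairing the extreme singular values of $\mc$ against those of $\hat{\mc}$ and using that $x\mapsto x+1/x$ is increasing on $[1,\infty)$ after normalizing, or more cleanly by appealing to the known fact that the (Frobenius, and Schatten-$p$) condition number of a matrix is monotone under passing to a submatrix, citing the interlacing-based majorization that underlies Lemma~\ref{l_fi}. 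Once the one-column step is established for all five $\xi$, the lemma follows by induction on $k-\ell$, and the equality $\kappa_\xi(\mc_1)=1$ closes the chain.
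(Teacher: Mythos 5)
Your treatment of $\kappa_2$, $\kappa_D$ and $\kappa_{D,p}$, the base case $\kappa_\xi(\mc_1)=1$, and the reduction to repeated single-column deletion are all fine and match the paper's route. The gap is in precisely the two cases you flag as the crux, $\kappa_F$ and $\kappa_{(p)}$: there you never actually complete the argument. The ``pairing/rearrangement'' idea with $x\mapsto x+1/x$ is only sketched, and the fallback of ``appealing to the known fact that the (Frobenius, Schatten-$p$) condition number is monotone under passing to a submatrix'' is circular, since for $\xi\in\{F,(p)\}$ that monotonicity \emph{is} the statement of Lemma~\ref{l_inter2}. (The side computation $\|\vc^\dagger\|_2=1$ also silently assumes unit-norm columns, which this lemma does not.)

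Moreover, the obstacle you perceive is not real, and the simple factorwise bound you dismiss is exactly how the paper proves it. Interlacing~(\ref{e_inter}) gives $\sigma_{k-\ell+j}(\mc)\le\sigma_j(\mc_\ell)\le\sigma_j(\mc)$ for $1\le j\le\ell$, hence
\begin{equation*}
\|\mc\|_{(p)}^p=\sum_{j=1}^k\sigma_j(\mc)^p\ \ge\ \sum_{j=1}^{\ell}\sigma_j(\mc_\ell)^p=\|\mc_\ell\|_{(p)}^p,
\qquad
\|\mc^{\dagger}\|_{(p)}^p=\sum_{j=1}^k\frac{1}{\sigma_j(\mc)^p}\ \ge\ \sum_{j=1}^{\ell}\frac{1}{\sigma_{k-\ell+j}(\mc)^p}\ \ge\ \sum_{j=1}^{\ell}\frac{1}{\sigma_j(\mc_\ell)^p}=\|\mc_\ell^{\dagger}\|_{(p)}^p,
\end{equation*}
so \emph{both} factors are individually monotone (with $p=2$ covering $\kappa_F$), and multiplying the two inequalities gives $\kappa_{(p)}(\mc)\ge\kappa_{(p)}(\mc_\ell)$ directly — the ``$+1$ versus $\le+1$'' mismatch never enters, because you need no additive decomposition, only the same two-factor monotonicity you already used for $\kappa_D$ and $\kappa_{D,p}$. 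This is the paper's proof, done in one step from $k$ to $\ell$ without your column-by-column induction (which is harmless but unnecessary). With this substitution your argument is complete; as written, the $\kappa_F$ and $\kappa_{(p)}$ cases are not proved.
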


\begin{proof}
Since $\mc$ has full column rank, all columns are non-zero,
and any non-zero column $\mc_1$ attains the minimal 
value of~1 for all the condition numbers.
\begin{itemize}
\item \emph{Two-norm condition number:}
For $\ell\geq 2$, singular value interlacing (\ref{e_inter}) implies
$\sigma_1(\mc)\geq\sigma_1(\mc_{\ell})$ and 
$\sigma_k(\mc)\leq \sigma_{\ell}(\mc_{\ell})$, 
hence 
\begin{equation*}
\kappa_2(\mc)=\frac{\sigma_1(\mc)}{\sigma_{k}(\mc)}\geq
\frac{\sigma_1(\mc_{\ell})}{\sigma_{\ell}(\mc_{\ell})}=\kappa_2(\mc_{\ell}).
\end{equation*}
\item \emph{Frobenius norm and Schatten-$p$ norm condition numbers for $p>2$:}
We give the proof for $p\geq 2$, which includes the case of the Frobenius norm. For $\ell\geq 2$, singular value interlacing~(\ref{e_inter})
implies
\begin{equation*}
\sigma_{k-\ell+j}(\mc)\leq\sigma_j(\mc_{\ell})\leq \sigma_j(\mc),\qquad 1\leq j\leq \ell.
\end{equation*}
Thus
\begin{equation*}
\|\mc\|_{(p)}^p=\sum_{j=1}^k{\sigma_j(\mc)^p}\geq
\sum_{j=1}^{\ell}{\sigma_j(\mc)^p}\geq
\sum_{j=1}^{\ell}{\sigma_j(\mc_{\ell})^p}
=\|\mc_{\ell}\|_{(p)}^p
\end{equation*}
and 
\begin{equation*}
\|\mc^{\dagger}\|_{(p)}^p=\sum_{j=1}^k{\frac{1}{\sigma_j(\mc)^p}}\geq
\sum_{j=k-\ell+1}^{\ell}{\frac{1}{\sigma_j(\mc)^p}}\geq
\sum_{j=1}^{\ell}{\frac{1}{\sigma_j(\mc_{\ell})^p}}
=\|\mc_{\ell}^{\dagger}\|_{(p)}^p.
\end{equation*}
Thus
\begin{equation*}
\left(\kappa_{(p)}(\mc)\right)^p=\|\mc\|_{(p)}^p\|\mc^{\dagger}\|_{(p)}^p
\geq \|\mc_{\ell}\|_{(p)}^p\|\mc_{\ell}^{\dagger}\|_{(p)}^p
=\left((\kappa_{(p)}(\mc_{\ell})\right)^p.
\end{equation*}
\item \emph{Mixed condition number and Schatten $p$-norm mixed condition numbers:} The proofs are analogous to the ones above.
\end{itemize}
\end{proof}

\subsection{Maximal stable rank}\label{s_unitnorm6}
We derive upper bounds for the stable rank
and $p$-stable rank for $p>2$
of matrices with unit-norm columns, and show that 
the upper bounds are attained only by matrices 
with orthonormal columns.

\begin{lemma}\label{l_srank}
Let $\mc \in \mathbb{R}^{m \times k}$ with $\rank(\mc) = k$ and $\|\mc\ve_j\|_2=1$, $1\leq j\leq k$. Then each of the following inequalities holds with equality if and only if $\mc$ has orthonormal
columns.
    \begin{enumerate}
\item Stable rank: $\sr(\mc) \le k$
\item $p$-stable rank: $\sr_{(p)}(\mc) \le k$
\end{enumerate}
\end{lemma}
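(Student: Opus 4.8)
The plan is to prove each bound by the same mechanism used throughout Section~\ref{s_unitnorm}: exploit the identity $\|\mc\|_F^2 = k$ from~\eqref{e_srk}, bound the relevant ratio, and then characterize equality via the singular value ordering. Since $\sr(\mc) = \sr_{(2)}(\mc)$, the stable rank claim is the special case $p=2$ of the $p$-stable rank claim, so it suffices to prove the second item for all $p \geq 2$ and then specialize.

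For the \emph{inequality} $\sr_{(p)}(\mc) \le k$: write $\sr_{(p)}(\mc) = \|\mc\|_{(p)}^p / \|\mc\|_2^p = \sum_{j=1}^k (\sigma_j(\mc)/\sigma_1(\mc))^p$. Every factor $\sigma_j(\mc)/\sigma_1(\mc) \in (0,1]$ by the singular value ordering~\eqref{e_so}, so each of the $k$ summands is at most $1$, giving the sum at most $k$. (Equivalently, for the stable rank case $p=2$ one may argue $\|\mc\|_F^2 = k \le k\,\sigma_1(\mc)^2 = k\|\mc\|_2^2$, using that $\sigma_1(\mc) \ge 1$ from~\eqref{e_srk}, which is cleaner and mirrors the pseudo-inverse argument in Lemma~\ref{l_pinv}.)

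For the \emph{equality characterization}: if $\mc$ has orthonormal columns then all $\sigma_j(\mc) = 1$, so $\sr_{(p)}(\mc) = k$. Conversely, if $\sr_{(p)}(\mc) = k$, then $\sum_{j=1}^k (\sigma_j(\mc)/\sigma_1(\mc))^p = k$ forces every summand to equal $1$ (a sum of $k$ terms each $\le 1$ equalling $k$), hence $\sigma_j(\mc) = \sigma_1(\mc)$ for all $j$. Combined with $\|\mc\|_F^2 = \sum_j \sigma_j(\mc)^2 = k$ from~\eqref{e_srk}, this gives $k\,\sigma_1(\mc)^2 = k$, so $\sigma_1(\mc) = 1$ and thus all singular values equal~$1$; therefore $\mc^T\mc = \mi_k$ and the columns are orthonormal. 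The stable rank statement follows by setting $p = 2$.

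I do not anticipate a genuine obstacle here: the only mild subtlety is that $\sr_{(p)}$ is defined via $\|\mc\|_2^p$ in the denominator rather than $k^{1/p}$-type normalization, so the bound is $k$ (not $k^{2/p}$ or similar), and one must resist the temptation to route through Lemma~\ref{l_norm}; the direct argument on the ratios $\sigma_j/\sigma_1$ is the shortest path and handles all $p \ge 2$ uniformly.
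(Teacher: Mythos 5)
Your proof is correct and follows essentially the same route as the paper: both derive the equality case by forcing all singular values to equal $\sigma_1(\mc)$ (a sum of nonnegative/at-most-one terms attaining its extreme) and then invoke $\|\mc\|_F^2=k$ from~(\ref{e_srk}) to conclude $\sigma_j(\mc)=1$ for all $j$. Treating the stable rank as the $p=2$ case of a uniform argument, rather than writing the two items out separately as the paper does, is only a cosmetic difference.
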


\begin{proof}
We tackle each part in turn.
    \begin{enumerate}
\item \emph{Stable rank:} The inequality follows from the definition.  

As for the equality, if $\mc$ has orthonormal columns, then
all $k$ singular values equal to $1$, and $\sr(\mc) = k$. 
Conversely, if $k=\sr(\mc) = \|\mc\|_F^2/\|\mc\|_2^p$, then the definition of stable rank 
implies 
\begin{equation*}
\sum_{j=1}^k {\sigma_j(\mc)^2}=\|\mc\|_F^2
=k\|\mc\|_2^2=k\,\sigma_1(\mc)^2.
\end{equation*}
Hence $\sum_{j=1}^k{(\sigma_1(\mc)^2-\sigma_j(\mc)^2)}=0$.
From the singular value ordering (\ref{e_so}) follows that this
is a sum of non-negative summands, and every
summand must be equal to zero. Hence, all singular values are the same. From (\ref{e_srk})
\changes{in Section~\ref{s_unitnorm1}}
follows that they must be equal to one, so $\mc$  has orthonormal columns. 

%

\item \emph{$p$-stable rank for $p>2$}: The definition of $p$-stable rank implies $\sr_{(p)}(\mc) \le k$. 

As for the equality, if $\mc$ has orthonormal columns, then all $k$ singular values of $\mc$ are equal to $1$, and $\sr_{(p)}(\mc) = k$, see also \cite[Example 2.1]{IpsS24}.

Conversely, if $k=\sr_{(p)}(\mc) = \|\mc\|_{(p)}^p/\|\mc\|_2^p$, then the definition of $p$-stable rank 
implies 
\begin{equation*}
\sum_{j=1}^k {\sigma_j(\mc)^p}=\|\mc\|_{(p)}^p
=k\|\mc\|_2^p=k\,\sigma_1(\mc)^p.
\end{equation*}
Hence $\sum_{j=1}^k{(\sigma_1(\mc)^p-\sigma_j(\mc)^p)}=0$.
From the singular value ordering~(\ref{e_so}) follows that this
is a sum of non-negative summands, and every
summand must be equal to zero. Hence, all singular values 
of $\mc$ are the same. From (\ref{e_srk}) \changes{in Section~\ref{s_unitnorm1}}
follows that they must be equal to one, so $\mc$  has orthonormal columns.
%
%
\end{enumerate}
\end{proof}

\section{Expressions for partitioned pseudo-inverses}\label{s_partinv}
For partitioned full column-rank matrices, we derive expressions for the 
partitioned pseudo-inverses (Lemma~\ref{l_pi0}), and lower bounds for the Frobenius norm of the 
partitioned pseudo-inverse
(Lemma~\ref{l_fi}) 
and for Schatten $p$-norms for $p>2$ (Lemma~\ref{l_pi1}).

Different versions of the expressions below have appeared in \cite[Theorem 1]{baksalary2021formulae},
\cite[Theorem 2]{cline1964representations} and~\cite[Lemma 3.3 and 3.4]{cegielski2001obtuse}. 
Here we present a short, self-contained proof.

\begin{lemma}\label{l_pi0}
Let $\mc=\begin{bmatrix}\mc_1&\mc_2\end{bmatrix}\in\rmn$ 
with $\mc_1\in\real^{m\times k}$ and $\mc_2\in\real^{m\times (n-k)}$ for some $1\leq k<n$.
If $\rank(\mc)=n$, then 
\begin{equation*}
\mc^{\dagger}=\begin{bmatrix}\mm_1^{\dagger} \\ \mm_2^{\dagger}\end{bmatrix}\in\real^{n\times m},\qquad
\text{where}\quad \mm_1\equiv \mP_2\mc_1, \quad \mm_2\equiv \mP_1\mc_2
\end{equation*}
and
\begin{equation*}
\mP_1\equiv \mi-\mc_1\mc_1^{\dagger},\qquad
\mP_2\equiv \mi-\mc_2\mc_2^{\dagger}
\end{equation*}
are $m\times m$ orthogonal projectors
onto $\range(\mc_1)^{\perp}$ and $\range(\mc_2)^{\perp}$, respectively.
\end{lemma}

\begin{proof}
Since $\mc$ has full column rank, $\mc^{\dagger}=(\mc^T\mc)^{-1}\mc^T$,
and
\begin{equation*}
\mc^T\mc=\begin{bmatrix}\mc_1^T\mc_1 &\mc_1^T\mc_2\\ \mc_2^T\mc_1 & \mc_2^T\mc_2\end{bmatrix}.
\end{equation*}
Furthermore, the column submatrices $\mc_1$ and $\mc_2$ also have full column rank, so that 
$\mc_1^T\mc_1\in\real^{k\times k}$ and 
$\mc_2^T\mc_2\in\real^{(n-k)\times (n-k)}$ are 
nonsingular.

From \cite[(4)]{Cot74} follows
\begin{equation*}
(\mc^T\mc)^{-1}=\begin{bmatrix}\ms_1^{-1} & \\ &\ms_2^{-1}\end{bmatrix}
\begin{bmatrix}\mi & -\mc_1^T\mc_2(\mc_2^T\mc_2)^{-1}\\ 
-\mc_2^T\mc_1(\mc_1^T\mc_1)^{-1}&\mi\end{bmatrix},
\end{equation*}
where
\begin{align}\begin{split}\label{e_pi0a}
\ms_1&\equiv \mc_1^T\mc_1-\mc_1^T\mc_2\mc_2^{\dagger}\mc_1
=\mc_1^T\>(\mi-\mc_2\mc_2^{\dagger})\>\mc_1=\mc_1^T\mP_2\mc_1=\mm_1^T\mm_1\\
\ms_2&\equiv \mc_2^T\mc_2-\mc_2^T\mc_1\mc_1^{\dagger}\mc_2=\mm_2^T\mm_2.
\end{split}
\end{align}
The nonsingularity of the Schur complements $\ms_1$ and $\ms_2$ follows from the nonsingularity of $\mc^T\mc$, $\mc_1^T\mc_1$ and $\mc_2^T\mc_2$, see (\ref{e_sc}) and
\cite[Section 1]{Cot74}.

Multiplying $(\mc^T\mc)^{-1}$ by $\mc^T$ on the right gives
\begin{equation}\label{e_pi0}
\mc^{\dagger}=
\begin{bmatrix}\ms_1^{-1}(\mc_1^T-\mc_1^T\mc_2\mc_2^{\dagger})\\
\ms_2^{-1}(-\mc_2^T\mc_1\mc_1^{\dagger}+\mc_2^T)\end{bmatrix}=
\begin{bmatrix}(\mm_1^T\mm_1)^{-1}\mc_1^T\mP_2\\ (\mm_2^T\mm_2)^{-1}\mc_2^T\mP_1\end{bmatrix}=
\begin{bmatrix}\mm_1^{\dagger}\\ \mm_2^{\dagger}\end{bmatrix}.
\end{equation}$\quad$
\end{proof}

Below is a lower bound on the Frobenius norm in terms of the partitioned inverses.

\begin{lemma}\label{l_fi}
Let $\mc=\begin{bmatrix}\mc_1 & \mc_2\end{bmatrix}\in\rmn$
with $\mc_1\in\real^{m\times k}$ and $\mc_2\in\real^{m\times (n-k)}$ for some $1\leq k<n$.
If $\rank(\mc)=n$, then 
\begin{equation*}
\|\mc^{\dagger}\|_F^2\geq \|\mc_1^{\dagger}\|_F^2+\|\mc_2^{\dagger}\|_F^2.
\end{equation*}
\end{lemma}

\begin{proof}
Lemma~\ref{l_pi0} and (\ref{e_pi0}) imply
\begin{equation*}
\mc^{\dagger}=\begin{bmatrix}\mm_1^{\dagger} \\\mm_2^{\dagger}\end{bmatrix}, 
\qquad \text{where}\quad
\mm_1\equiv \mP_2\mc_2, \quad
\mm_2\equiv \mP_1\mc_2
\end{equation*}
and $\mP_1$ and $\mP_2$ are orthogonal projectors. 
Since $\|\mP_1\|_2=\|\mP_2\|_2=1$, the product
inequalities for singular values 
\cite[Problem 7.3.P16]{HoJoI} imply
\begin{equation*}
\sigma_i(\mm_1)=\sigma_i(\mP_2\mc_1)\leq
\|\mP_2\|_2\,\sigma_i(\mc_1)=\sigma_i(\mc_1),
\qquad 1\leq i\leq k.
\end{equation*}
Because both $\mm_1$ and $\mc_1$ have full rank, we can invert the singular values
\begin{equation}\label{eqn:eigc1m1}
1/\sigma_i(\mc_1)\leq 1/\sigma_i(\mm_1), \qquad 1\leq i\leq k.
\end{equation}
Therefore
$\|\mm_1^{\dagger}\|_F\geq\|\mc_1^{\dagger}\|_F$. This is a special case of \cite[Theorem 3.2]{Maher2007}.
The proof for $\|\mm_2^{\dagger}\|_F\geq\|\mc_2^{\dagger}\|_F$ is analogous. 
Thus
\begin{equation*}
\|\mc^{\dagger}\|_F^2
=\|\mm_1^{\dagger}\|_F^2+\|\mm_2^{\dagger}\|_F^2\geq
\|\mc_1^{\dagger}\|_F^2+\|\mc_2^{\dagger}\|_F^2.
\end{equation*}$\quad$
\end{proof}

Below is a lower bound on the Schatten $p$-norms in terms of the partitioned inverses. 

\begin{lemma}\label{l_pi1}
    Let $\mc=\begin{bmatrix}\mc_1 & \mc_2\end{bmatrix}\in\rmn$ have $\rank(\mc)=n$. Then for $p> 2$
\begin{equation*}
\|\mc^{\dagger}\|_{(p)}^2 \geq \|\mc_1^{\dagger}\|_{(p)}^2+\|\mc_2^{\dagger}\|_{(p)}^2.
\end{equation*}
\end{lemma}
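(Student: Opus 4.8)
The plan is to mimic the structure of the proof of Lemma~\ref{l_fi}, replacing the Frobenius norm with the Schatten $p$-norm throughout and supplying one extra convexity/monotonicity argument at the very end. First I would invoke Lemma~\ref{l_pi0}, which gives the block decomposition $\mc^{\dagger}=\begin{bmatrix}\mm_1^{\dagger}\\ \mm_2^{\dagger}\end{bmatrix}$ with $\mm_1\equiv\mP_2\mc_1$ and $\mm_2\equiv\mP_1\mc_2$, where $\mP_1,\mP_2$ are orthogonal projectors. Then, exactly as in Lemma~\ref{l_fi}, the product inequalities for singular values \cite[Problem 7.3.P16]{HoJoI} together with $\|\mP_2\|_2=1$ give $\sigma_i(\mm_1)\leq\sigma_i(\mc_1)$ for $1\leq i\leq k$; inverting (both matrices have full column rank) yields $\sigma_i(\mm_1^{\dagger})\geq\sigma_i(\mc_1^{\dagger})$, hence $\|\mm_1^{\dagger}\|_{(p)}\geq\|\mc_1^{\dagger}\|_{(p)}$, and analogously $\|\mm_2^{\dagger}\|_{(p)}\geq\|\mc_2^{\dagger}\|_{(p)}$.

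The remaining step is to relate $\|\mc^{\dagger}\|_{(p)}^2$ to $\|\mm_1^{\dagger}\|_{(p)}^2+\|\mm_2^{\dagger}\|_{(p)}^2$. Here the Frobenius case was trivial because the Schatten $2$-norm is additive under vertical stacking; for general $p$ this fails, so I would instead use that the singular values of $\mc^{\dagger}$, viewed as the stacked matrix, satisfy a majorization-type relation with the concatenation of the singular values of $\mm_1^{\dagger}$ and $\mm_2^{\dagger}$. Concretely, $\|\mc^{\dagger}\|_{(p)}^p\geq\|\mm_1^{\dagger}\|_{(p)}^p+\|\mm_2^{\dagger}\|_{(p)}^p$ is false in general (stacking can only increase the top singular value), but the correct direction — which is what the lemma needs — is $\|\mc^{\dagger}\|_{(p)}^2\geq\|\mm_1^{\dagger}\|_{(p)}^2+\|\mm_2^{\dagger}\|_{(p)}^2$. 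This follows from the fact that for $p\geq 2$ the map $X\mapsto\|X\|_{(p)}^2$ is superadditive on block-stacked matrices: writing $\mc^{\dagger T}=\begin{bmatrix}\mm_1^{\dagger T} & \mm_2^{\dagger T}\end{bmatrix}$ we have $\mc^{\dagger}\mc^{\dagger T}=\mm_1^{\dagger}\mm_1^{\dagger T}+\mm_2^{\dagger}\mm_2^{\dagger T}$, and applying the triangle inequality for the Schatten $p/2$-norm (valid since $p/2\geq 1$) to these PSD matrices gives $\|\mc^{\dagger}\|_{(p)}^2=\|\mc^{\dagger}\mc^{\dagger T}\|_{(p/2)}\leq\|\mm_1^{\dagger}\mm_1^{\dagger T}\|_{(p/2)}+\|\mm_2^{\dagger}\mm_2^{\dagger T}\|_{(p/2)}=\|\mm_1^{\dagger}\|_{(p)}^2+\|\mm_2^{\dagger}\|_{(p)}^2$ — but that is the wrong inequality direction, so instead I would use the \emph{reverse} triangle inequality that holds because the two summands have orthogonal column spaces (indeed $\mm_1^{\dagger}\mm_1^{\dagger T}$ and $\mm_2^{\dagger}\mm_2^{\dagger T}$ act on complementary subspaces after accounting for the projectors), which forces superadditivity rather than subadditivity.

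I expect the main obstacle to be precisely this last point: justifying why $\|\mc^{\dagger}\|_{(p)}^2\geq\|\mm_1^{\dagger}\|_{(p)}^2+\|\mm_2^{\dagger}\|_{(p)}^2$ rather than the reverse. The cleanest route is probably to observe that $\mm_1^{\dagger}$ has its rows in the row space of $\mc_1^T\mP_2$ and $\mm_2^{\dagger}$ in the row space of $\mc_2^T\mP_1$, and that $\range(\mP_2\mc_1)$ and $\range(\mP_1\mc_2)$ together with the block structure make the nonzero singular values of $\mc^{\dagger}$ the union (with multiplicity) of those of $\mm_1^{\dagger}$ and $\mm_2^{\dagger}$ only up to a rotation that can only \emph{spread} them; more robustly, since $\|\cdot\|_{(p)}^2=\|\cdot\|_{(p)}^2$ and $x\mapsto x^{2/p}$ is concave for $p\geq 2$, one has $\big(\sum_i s_i^p\big)^{2/p}\geq\big(\sum_i a_i^p\big)^{2/p}+\big(\sum_i b_i^p\big)^{2/p}$ whenever the multiset $\{s_i\}$ majorizes-from-above the concatenation $\{a_i\}\cup\{b_i\}$ in the sense $\sum s_i^p\geq\sum a_i^p+\sum b_i^p$ fails — so the genuinely load-bearing fact is a Schatten-norm interlacing statement for the stacked matrix $\mc^{\dagger}$, namely that its singular values weakly majorize those obtained by zero-padding $\mm_1^{\dagger}$ and $\mm_2^{\dagger}$ to a common stack. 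I would isolate that as the key inequality, prove it via the orthogonality of the two row blocks induced by the complementary projectors $\mP_1,\mP_2$ (which is the real content making superadditivity hold), and then the Schatten $p$-norm bound drops out by monotonicity of $\sum(\cdot)^{p}$ and the elementary inequality $(u+v)^{2/p}\geq u^{2/p}+v^{2/p}$ is \emph{not} what is used — instead $\|\mc^{\dagger}\|_{(p)}^2\geq\max$ of the pieces is too weak, so the argument must genuinely exploit that the two blocks contribute disjoint singular directions.
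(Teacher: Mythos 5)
Your first half is fine and matches the paper's opening: Lemma~\ref{l_pi0} gives $\mc^{\dagger}=\begin{bmatrix}\mm_1^{\dagger}\\ \mm_2^{\dagger}\end{bmatrix}$, the projector bounds $\|\mP_1\|_2=\|\mP_2\|_2=1$ give $\sigma_i(\mm_1)\le\sigma_i(\mc_1)$ and $\sigma_i(\mm_2)\le\sigma_i(\mc_2)$, hence $\|\mm_j^{\dagger}\|_{(p)}\ge\|\mc_j^{\dagger}\|_{(p)}$. The gap is exactly the step you flag as the obstacle: you never establish $\|\mc^{\dagger}\|_{(p)}^2\ge\|\mm_1^{\dagger}\|_{(p)}^2+\|\mm_2^{\dagger}\|_{(p)}^2$, and the ingredients you propose for it do not hold. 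The identity $\mc^{\dagger}(\mc^{\dagger})^T=\mm_1^{\dagger}(\mm_1^{\dagger})^T+\mm_2^{\dagger}(\mm_2^{\dagger})^T$ is dimensionally impossible (those products are $k\times k$ and $(n-k)\times(n-k)$); the additive identity holds for $(\mc^{\dagger})^T\mc^{\dagger}$, while $\mc^{\dagger}(\mc^{\dagger})^T$ is the $2\times 2$ block matrix having them as diagonal blocks. The orthogonality you want to lean on is also absent: $\range(\mm_1)\perp\range(\mc_2)$ and $\range(\mm_2)\perp\range(\mc_1)$, but $\range(\mm_1)$ and $\range(\mm_2)$ are not orthogonal to each other in general, so there is no ``reverse triangle inequality'' forcing superadditivity of $\|\cdot\|_{(p)}^2$. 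Incidentally, the inequality you dismiss as false, $\|\mc^{\dagger}\|_{(p)}^p\ge\|\mm_1^{\dagger}\|_{(p)}^p+\|\mm_2^{\dagger}\|_{(p)}^p$, is in fact true: the pinching inequality applied to $\mc^{\dagger}(\mc^{\dagger})^T$ in the Schatten $p/2$-norm gives $\|\mc^{\dagger}\|_{(p)}^2\ge\|\diag\bigl(\mm_1^{\dagger}(\mm_1^{\dagger})^T,\,\mm_2^{\dagger}(\mm_2^{\dagger})^T\bigr)\|_{(p/2)}=\bigl(\|\mm_1^{\dagger}\|_{(p)}^p+\|\mm_2^{\dagger}\|_{(p)}^p\bigr)^{2/p}$, which is essentially the paper's route.

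The deeper problem is that the missing step cannot be supplied, because the squared form of the statement is false for $p>2$: take $\mc=\mi_2$, $\mc_1=\ve_1$, $\mc_2=\ve_2$; then $\|\mc^{\dagger}\|_{(p)}^2=2^{2/p}<2=\|\mc_1^{\dagger}\|_{(p)}^2+\|\mc_2^{\dagger}\|_{(p)}^2$. What the pinching argument actually yields is the $p$-th-power version $\|\mc^{\dagger}\|_{(p)}^p\ge\|\mc_1^{\dagger}\|_{(p)}^p+\|\mc_2^{\dagger}\|_{(p)}^p$; the paper's proof passes to the squared form via the identity $\|\diag(B_1,B_2)\|_{(p/2)}=\|B_1\|_{(p/2)}+\|B_2\|_{(p/2)}$, which is valid only at $p=2$ — for $p>2$ the block-diagonal norm equals $\bigl(\|B_1\|_{(p/2)}^{p/2}+\|B_2\|_{(p/2)}^{p/2}\bigr)^{2/p}$, which is smaller. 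Since vector $p$-norms decrease in $p$, the $p$-th-power inequality cannot be upgraded to the squared one. So your instinct that this last step is where the difficulty sits was correct, but the resolution is to weaken the conclusion to $p$-th powers (the Frobenius case $p=2$ of Lemma~\ref{l_fi} is precisely where the two forms coincide), not to look for an orthogonality-based superadditivity argument.
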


\begin{proof}


From~\eqref{eqn:eigc1m1}, follows $\sigma_j(\mc_1^\dagger)^2 \le \sigma_j(\mm_1^\dagger)^2$ for $1 \le j \le k$, or $\|\mc_1^\dagger\|_{(p)}^2 \le \|\mm_1^\dagger\|_{(p)}^2$, with an analogous relation between $\mc_2$ and $\mm_2$. Lemma~\ref{l_pi0} implies 
\[ \mc^\dagger (\mc^\dagger)^T = \begin{bmatrix} \mm_1^\dagger (\mm_1^\dagger)^T & \mm_1^{\dagger}(\mm_2^{\dagger})^T \\ \mm_2^{\dagger}(\mm_1^{\dagger})^T  & \mm_2^\dagger (\mm_2^\dagger)^T \end{bmatrix}.\]
Applying the pinching inequality~\cite[Equation (IV.52)]{Bhatia97} gives
\[ \begin{aligned} \|\mc^\dagger\|_{(p)}^2 = \|\mc^\dagger (\mc^\dagger)^T\|_{(p/2)} 
&{\ge  \|\diag\begin{pmatrix}\mm_1^\dagger (\mm_1^\dagger)^T & \mm_2^\dagger (\mm_2^\dagger)^T\end{pmatrix}\|_{(p/2)}}\\
&=\>  \|\mm_1^\dagger (\mm_1^\dagger)^T\|_{(p/2)} + \|\mm_2^\dagger (\mm_2^\dagger)^T\|_{(p/2)}\\ &\ge  
\> \|\mc_1^\dagger (\mc_1^\dagger)^T\|_{(p/2)} + \|\mc_2^\dagger (\mc_2^\dagger)^T\|_{(p/2)} \\
&= \> \|\mc_1^{\dagger}\|_{(p)}^{2} + \|\mc_2^{\dagger}\|_{(p)}^2.
\end{aligned}
\] 
\end{proof}

\subsection*{Acknowledgments}
We are most grateful to three anonymous reviewers for their
constructive feedback in regard to exposition and literature references.


\begin{thebibliography}{10}

\bibitem{AD2025}
{\sc R.~Armstrong and A.~Damle}, {\em Collect, commit, expand: Efficient
  {CPQR}-based column selection for extremely wide matrices}, 2025.
\newblock arXiv:2501.18035.

\bibitem{AvronBout2013}
{\sc H.~Avron and C.~Boutsidis}, {\em Faster subset selection for matrices and
  applications}, SIAM J. Matrix Anal. Appl., 34 (2013), pp.~1464--1499.

\bibitem{baksalary2021formulae}
{\sc O.~M. Baksalary and G.~Trenkler}, {\em On formulae for the
  {M}oore-{P}enrose inverse of a columnwise partitioned matrix}, Appl. Math.
  Comput., 403 (2021), pp.~Paper No. 125913, 10.

\bibitem{Bhatia97}
{\sc R.~Bhatia}, {\em Matrix analysis}, vol.~169 of Graduate Texts in
  Mathematics, Springer-Verlag, New York, 1997.

\bibitem{BDMI2014}
{\sc C.~Boutsidis, P.~Drineas, and M.~{Magdon-Ismail}}, {\em Near-optimal
  column-based matrix reconstruction}, SIAM J. Comput., 43 (2014),
  pp.~687--717.

\bibitem{civril2014column}
{\sc A.~\c{C}ivril}, {\em Column subset selection problem is {UG}-hard}, J.
  Comput. and System Sci., 80 (2014), pp.~849--859.

\bibitem{cegielski2001obtuse}
{\sc A.~Cegielski}, {\em Obtuse cones and {G}ram matrices with non-negative
  inverse}, Linear Algebra Appl., 335 (2001), pp.~167--181.

\bibitem{CI1994}
{\sc S.~Chandrasekaran and I.~C.~F. Ipsen}, {\em On rank-revealing
  factorisations}, SIAM J. Matrix Anal. Appl., 15 (1994), pp.~592--622.

\bibitem{CM09}
{\sc A.~{\c C}ivril and M.~Magdon-Ismail}, {\em On selecting a maximum volume
  sub-matrix of a matrix and related problems}, Theoret. Comput. Sci., 410
  (2009), pp.~4801--4811.

\bibitem{CMI2013}
\leavevmode\vrule height 2pt depth -1.6pt width 23pt, {\em Exponential
  inapproximability of selecting a maximum volume sub-matrix}, Algorithmica, 65
  (2013), pp.~159--176.

\bibitem{cline1964representations}
{\sc R.~E. Cline}, {\em Representations for the generalized inverse of a
  partitioned matrix}, J. Soc. Indust. Appl. Math., 12 (1964), pp.~588--600.

\bibitem{Cot74}
{\sc R.~W. Cottle}, {\em Manifestations of the {S}chur complement}, Linear
  Algebra Appl., 8 (1974), pp.~189--211.

\bibitem{DGTY2024}
{\sc A.~Damle, S.~Glas, A.~Townsend, and A.~Yu}, {\em How to reveal the rank of
  a matrix?}, 2024.
\newblock arXiv:2405.04330.

\bibitem{dHM2011}
{\sc F.~R. {de Hoog} and R.~M.~M. Mattheij}, {\em A note on subset selection
  for matrices}, Linear Algebra Appl., 434 (2011), pp.~1845--1850.

\bibitem{Dem87}
{\sc J.~W. Demmel}, {\em The geometry of ill-conditioning}, J. Complexity, 3
  (1987), pp.~201--229.

\bibitem{DeshR10}
{\sc A.~Deshpande and L.~Rademacher}, {\em Efficient volume sampling for
  row/column subset selection}, in 2010 {IEEE} 51st {A}nnual {S}ymposium on
  {F}oundations of {C}omputer {S}cience---{FOCS} 2010, IEEE Computer Soc., Los
  Alamitos, CA, 2010, pp.~329--338.

\bibitem{DeshR06}
{\sc A.~Deshpande, L.~Rademacher, S.~Vempala, and G.~Wang}, {\em Matrix
  approximation and projective clustering via volume sampling}, Theory Comput.,
  2 (2006), pp.~225--247.

\bibitem{DEFM15}
{\sc M.~Di~Summa, F.~Eisenbrand, Y.~Faenza, and C.~Moldenhauer}, {\em On
  largest volume simplices and sub-determinants}, in Proceedings of the
  {T}wenty-{S}ixth {A}nnual {ACM}-{SIAM} {S}ymposium on {D}iscrete
  {A}lgorithms, SIAM, Philadelphia, PA, 2015, pp.~315--323.

\bibitem{eswar2024bayesian}
{\sc S.~Eswar, V.~Rao, and A.~K. Saibaba}, {\em Bayesian {D}-optimal
  experimental designs via column subset selection}, arXiv preprint
  arXiv:2402.16000,  (2024).

\bibitem{Garey}
{\sc M.~R. Garey and D.~S. Johnson}, {\em Computers and intractability}, A
  Series of Books in the Mathematical Sciences, W. H. Freeman and Co., San
  Francisco, CA, 1979.
\newblock A guide to the theory of NP-completeness.

\bibitem{GovL13}
{\sc G.~H. Golub and C.~F. {Van Loan}}, {\em Matrix computations}, Johns
  Hopkins Studies in the Mathematical Sciences, Johns Hopkins University Press,
  Baltimore, MD, fourth~ed., 2013.

\bibitem{GZ2025}
{\sc L.~Grigori and Z.~Xue}, {\em Randomized strong rank-revealing {QR} for
  column subset selection and low-rank matrix approximation}, 2025.
\newblock arXiv:2503.18496.

\bibitem{GuEis96}
{\sc M.~Gu and S.~C. Eisenstat}, {\em Efficient algorithms for computing a
  strong rank-revealing {QR} factorization}, SIAM J. Sci. Comput., 17 (1996),
  pp.~848--869.

\bibitem{HLim13}
{\sc C.~J. Hillar and L.-H. Lim}, {\em Most tensor problems are {NP}-hard}, J.
  ACM, 60 (2013), pp.~Art. 45, 39.

\bibitem{HongPan92}
{\sc Y.~P. Hong and C.-T. Pan}, {\em A lower bound for the smallest singular
  value}, Linear Algebra Appl., 172 (1992), pp.~27--32.

\bibitem{HoJoI}
{\sc R.~A. Horn and C.~R. Johnson}, {\em Matrix analysis}, Cambridge University
  Press, Cambridge, second~ed., 2013.

\bibitem{IpsS24}
{\sc I.~C.~F. Ipsen and A.~K. Saibaba}, {\em Stable rank and intrinsic
  dimension of real and complex matrices}, SIAM J. Matrix Anal. Appl., 46
  (2025), pp.~1988--2007.

\bibitem{Karp1972}
{\sc R.~M. Karp}, {\em Reducibility among combinatorial problems}, in
  Complexity of computer computations ({P}roc. {S}ympos., {IBM} {T}homas {J}.
  {W}atson {R}es. {C}enter, {Y}orktown {H}eights, {N}.{Y}., 1972), The IBM
  Research Symposia Series, Plenum, New York-London, 1972, pp.~85--103.

\bibitem{KO2025}
{\sc I.~Kozyrev and A.~Osinsky}, {\em Subset selection for matrices in spectral
  norm}, 2025.
\newblock arXiv:2507.20435.

\bibitem{KressNi2024}
{\sc D.~Kressner, T.~Ni, and A.~Uschmajew}, {\em On the approximation of
  vector-valued functions by volume sampling}, J. Complexity, 86 (2025),
  p.~101887.

\bibitem{LCS2024}
{\sc J.~T. Lauzon, S.~W. Cheung, Y.~Shin, Y.~Choi, D.~M. Copeland, and
  K.~Huynh}, {\em S-{OPT}: a points selection algorithm for hyper-reduction in
  reduced order models}, SIAM J. Sci. Comput., 46 (2024), pp.~B474--B501.

\bibitem{Maher2007}
{\sc P.~J. Maher}, {\em Some singular values, and unitarily invariant norm
  inequalities concerning generalized inverses}, Filomat, 21 (2007),
  pp.~99--111.

\bibitem{NikoS16}
{\sc A.~Nikolov and M.~Singh}, {\em Maximizing determinants under partition
  constraints}, in S{TOC}'16---{P}roceedings of the 48th {A}nnual {ACM}
  {SIGACT} {S}ymposium on {T}heory of {C}omputing, ACM, New York, 2016,
  pp.~192--201.

\bibitem{NikoST19}
{\sc A.~Nikolov, M.~Singh, and U.~Tantipongpipat}, {\em Proportional volume
  sampling and approximation algorithms for a-optimal design}, in Proceedings
  of the Thirtieth Annual ACM-SIAM Symposium on Discrete Algorithms, SODA '19,
  USA, 2019, Society for Industrial and Applied Mathematics, pp.~1369--1386.

\bibitem{NikoST22}
\leavevmode\vrule height 2pt depth -1.6pt width 23pt, {\em Proportional volume
  sampling and approximation algorithms for {$A$}-optimal design}, Math. Oper.
  Res., 47 (2022), pp.~847--877.

\bibitem{Osinsky2023b}
{\sc A.~Osinsky}, {\em Polynomial time {$\rho $}-locally maximum volume
  search}, Calcolo, 60 (2023), pp.~Paper No. 42, 26.

\bibitem{Osinsky2024}
{\sc A.~Osinsky}, {\em Volume-based subset selection}, Numer. Linear Algebra
  Appl., 31 (2024), pp.~Paper No. e2525, 14.

\bibitem{Papa1991}
{\sc C.~H. Papadimitriou and M.~Yannakakis}, {\em Optimization, approximation,
  and complexity classes}, J. Comput. System Sci., 43 (1991), pp.~425--440.

\bibitem{SX2016}
{\sc Y.~Shin and D.~Xiu}, {\em Nonadaptive quasi-optimal points selection for
  least squares linear regression}, SIAM J. Sci. Comput., 38 (2016),
  pp.~A385--A411.

\bibitem{shitov2021column}
{\sc Y.~Shitov}, {\em Column subset selection is {NP-complete}}, Linear Algebra
  Appl., 610 (2021), pp.~52--58.

\bibitem{SoodHastie2025}
{\sc A.~Sood and T.~Hastie}, {\em A statistical view of column subset
  selection}, J. R. Stat. Soc. Ser. B. Stat. Methodol., 87 (2025),
  pp.~1382--1403.

\bibitem{Welch1982}
{\sc W.~J. Welch}, {\em Algorithmic complexity: three {NP}-hard problems in
  computational statistics}, J. Statist. Comput. Simulation, 15 (1982),
  pp.~17--25.

\bibitem{WillBook}
{\sc D.~P. Williamson and D.~B. Shmoys}, {\em The design of approximation
  algorithms}, Cambridge University Press, Cambridge, 2011.

\end{thebibliography}

\end{document}